\newtheorem{thm}{Theorem}[section]
\newtheorem{lem}{Lemma}[section]
\newtheorem{cor}{Corollary}[section]
\newtheorem{rem}{Remark}[section]
\newtheorem{assumption}{Assumption}[section]
\DeclarePairedDelimiter\floor{\lfloor}{\rfloor}
\newcommand{\Var}{\operatorname{Var}}
\newcommand{\Cov}{\operatorname{Cov}}
\newcommand{\I}{\text{I}}
\newcommand{\Transp}{^{\text{T}}}
\newcommand{\E}{\mathbb{E}}
\newcommand{\Prob}{\text{P}}
\begin{document}
\bibliographystyle{plainnat}
\title{\bf Functional Sieve Bootstrap for the Partial Sum Process with an Application to Change-Point Detection}

\date{\vspace{-1cm}}

\author[1]{Efstathios Paparoditis}
\author[2]{Lea Wegner}
\author[2]{Martin Wendler\thanks{The research was supported by German Research Foundation (Deutsche Forschungsgemeinschaft - DFG), project WE 5988/5 \emph{Graduelle Struktur\"anderungen in funktionalen Daten} and partially supported by a Cyprus Academy of Sciences, Letters and Arts research grant. We thank  Greg Rice for providing the R code for the method by \cite{AUE} we have used for comparison with our method, and Han Lin Shang for allowing us to use his R-Code for the functional sieve bootstrap.}\thanks{\texttt{ martin.wendler@ovgu.de}}}

\affil[1]{Cyprus Academy of Sciences, Letters and Arts}
\affil[2]{Otto-von-Guericke-University Magdeburg}

\maketitle

{\bf Abstract}  This paper applies the functional  sieve bootstrap  (FSB)   to  estimate the distribution of   the partial sum process  for time series stemming from a  weakly stationary functional  process. Consistency of the FSB procedure under weak assumptions  on the underlying functional process  is established.  This   result allows for the application of  the FSB  procedure    to  testing for a change-point  in the mean of a    functional time series  using the CUSUM-statistic.
We   show   that the FSB    asymptotically correctly  estimates critical values of the CUSUM-based test under the null-hypothesis. Consistency of the  FSB-based test under local alternatives  also is proven. The finite sample performance of  the procedure is studied via simulations.

\medskip

{\bf Keywords} Resampling; Autoregressive Processes; Functional Time Series; Structural Break

\section{Introduction}

Consider a time series $ X_1, X_2, \ldots, X_n$ stemming from a stationary functional process $  {\mathcal X}:=\{X_s, s\in \mathbb{Z}\}$, where for each $s \in \mathbb{Z}$,  the random element $ X_s$,  takes values in a separable Hilbert space $ {\mathcal H}$ equipped with a inner product 
$ \langle\cdot,\cdot\rangle :{\mathcal H}\times {\mathcal H} \rightarrow  \mathbb{R}$ and the norm $ \|x\| := \langle x,x\rangle^{1/2}$,  $ x\in{\mathcal H}$.  
For any  $t \in [0,1]$ consider the random process  $  Z_n=(Z_n(t) )_{ t\in[0,1]}$, where $Z_n(t)$ is  defined as the partial sum, 
\begin{equation}  \label{eq.DefPS}
Z_n(t) = n^{-1/2} \sum_{t=1}^{\lfloor nt\rfloor} X_t.
\end{equation}
Here and  for a real number $x$, $ \lfloor x\rfloor $ is the larger integer which does not exceed $x$. The  limiting behavior of $Z_n$ has attracted considerable interest in the  literature. In particular, it  has been shown,
 under a set of  weak dependent and moment conditions on $\{X_s, s\in\mathbb{Z}\}$,  
that as $n\rightarrow \infty$,  
\begin{equation} \label{eq.WeakConv}
 Z_n \Rightarrow W, 
 \end{equation}
where $ \Rightarrow$ denotes weak convergence and $ W$  is a Brownian motion in  ${\mathcal H}  $ characterized by the covariance operator $ C_W :{\mathcal H} \rightarrow {\mathcal H}$ of $W(1)$, which satisfies  
\begin{align} \label{eq.CovW}
\langle C_W(x),y \rangle & = \sum_{j=-\infty}^\infty \Cov(\langle X_0,x\rangle, \langle X_j, y \rangle) 
\end{align} 
for any $ x,y\in{\mathcal H}$.  We refer here to   \cite{jirak2013weak}  and to  \cite{chen1998central}  for establishing such a result under different short-range dependence conditions.

The above asymptotic result  is difficult to implement in practice due to the complicated structure of the covariance  of the limiting Brownian motion. One way to tackle this problem is to apply bootstrap or resampling techniques, which 
are  able  to  correctly imitate the random behavior of $ Z_n$.  For  functional time series, different bootstrap methods have been proposed that intent to properly mimic the temporal dependence structure of $ X_1,X_2, \ldots, X_n$; see  \cite{shang2018bootstrap} for an overview. \cite{PR94} applied the stationary bootstrap to Hilbert-space-valued processes,  \cite{DSW15} considered  the non-overlapping block bootstrap, \cite{franke2019residual} and \cite{zhu2017kernel} investigated properties of  residual-based bootstrap procedures for first order functional autoregression and \cite{PAP} developed a functional sieve bootstrap (FSB) approach. Especially, the  FSB builds upon the  Karhunen-Loeve representation of the random element $ X_s$ and  uses   a finite set of (static) functional principal components (scores), the temporal dependence  of which is mimicked  via  fitting  a  finite order vector autoregressive (VAR) model to the corresponding  (estimated) vector time series of scores.   The vector time series of scores  is then bootstrapped using  the  fitted  VAR model and  the procedure generates  fully  functional pseudo observations $X_1^\ast, X_2^\ast, \ldots, X_n^\ast$. 

The  first aim of this  paper  is to  justify   theoretically the use of the  FSB, when 
applied   to estimate the distribution of the partial sum process $ (Z_n(t))_{ t\in[0,1]}$.   Note  that,  despite the fact that  the technical arguments used   partly build  upon  some basic results proven  in 
\cite{PAP} and  \cite{paparoditis2023bootstrap}, new techniques are used here due  to the fact that  statements  over   uniform convergence  in the interval   $ [0,1]$   have to be established and  not  only in 
  the usual  $ \|\cdot\|$ norm of the associated  Hilbert space; see  Lemma 5.1 to 5.3 of  the Appendix.  Moreover,  the proof of the functional central limit theorem given in Lemma 5.4   of the Appendix  uses the basic Theorem 3.2 of   
   Billingsley (1968).   Note that the FSB (like its finite dimensional analogue, the AR-sieve bootstrap) can be valid  for approximating the distribution of a statistic of interest even if the underlying functional process  is not  linear   provided  the limiting  distribution of this statistic only depends on the  first and second order moments of the underlying process.   A theoretical justification of this  statement in   the finite dimensional case  in given in \cite{kreiss2011range}.   Although a general  proof  of this statement in the functional context is beyond  the scope of the current paper, as we will see, this  holds  true  for the  partial sum  statistic (\ref{eq.DefPS}). In particular,  Assumption 2.1  below, allows for a variety of   linear and nonlinear functional processes.

The established asymptotic validity of the FSB  applied to  $ (Z_n(t))_{ t\in[0,1]}$ enables the application of the same bootstrap procedure  to related  statistical inference problems. More specifically, we  
 consider the problem of testing  whether the   observed functional time series  $X_1,...,X_n$  has a change in mean. For this   testing problem,   a functional version of the CUSUM-test statistic can be used,  which is given by
\begin{equation} \label{eq.CUSUM}
T_n=\max_{k=1,...,n-1}\frac{1}{\sqrt{n}}\Big\|\sum_{i=1}^k X_i-\frac{k}{n}\sum_{i=1}^nX_i\Big\|.
\end{equation}
Note that in the case of functional data, such a test statistic as well as variants thereof,  have been studied by \cite{horvath2014testing}, \cite{STW16}, \cite{AUE}.
Since the  CUSUM test statistic  (\ref{eq.CUSUM}) is a nonlinear functional of the partial sum process $(Z_n(t))_{t\in[0,1]}$, 
it  turns out  that    difficulties associated with implementing the asymptotic result (\ref{eq.WeakConv}) are transferred   to  difficulties in   obtaining  critical values of  the test $T_n$.
Among other approaches,  bootstrap or resampling methods   have  also been used: \cite{STW16} studied a sequential non-overlapping block bootstrap, \cite{dette2020functional} a  block multiplier bootstrap  and \cite{wegner2024robust} a  dependent wild bootstrap approach. 

These considerations justify the  second aim of this paper  which is  to   investigate the capability  of the  FSB   when applied to estimate the distribution of  $T_n$ under the null-hypothesis of no change and to deduce critical values of the test. 

The paper is organized as follows:  Section 2   introduces  some notation and establishes the main theoretical result of this paper which shows validity  of the FSB in consistently estimating the distribution of the partial sum process. 
Section 3  discusses the  problem of  testing for a change point using the CUSUM-test $T_n$  and shows  consistency of the FSB under the null-hypothesis as well as  under local alternatives.  Section 4  investigates 
the finite sample performance of the FSB-based CUSUM-test  in the context  of a simulation study and comparisons to some alternative approaches  also are made. Technical proofs and additional simulation results are deferred to the supplementary file.

\section{Notation and Main Result} \label{sec.Sec2}

Assume that for each $ s\in  \mathbb{Z}$,    the random element $X_s$ takes  values in the Hilbert space $ {\mathcal H}$ of square integrable functions from $[0,1]$ to $\mathbb{R}$ equipped with the inner product  $ \langle f, g \rangle=\int_0^1 f(u)g(u)du$, $f,g\in {\mathcal H}$  and the norm $\|f\|=\sqrt{\langle f,f\rangle}$. We denote by $ \E X_n \in{\mathcal H}$ the expectation and for $h\in \mathbb{Z}$ by $ {\mathcal C}_h =\E (X_n-\E X_0)\otimes (X_{n+h}-\E X_0)$ the lag $h$ autocovariance operator of the process ${\mathcal X}$, where  the tensor operator is defined as $x\otimes y = \langle x,\cdot\rangle y $ for $x,y \in {\mathcal H}$.  For a nuclear  (trace class) operator $ L$, $\|L\|_N$ denotes the nuclear norm and $ \|L\|_{HS}$ the Hilbert-Schmidt norm, if $ L$ is a Hilbert-Schmidt operator.  Assume that $ \sum_{h\in \mathbb{Z}} \Vert {\mathcal C}_h\Vert_N <\infty$, which implies that ${\mathcal X}$ possesses the  spectral density operator 
 \[ {\mathcal F}_\omega = \frac{1}{2\pi} \sum_{h\in \mathbb{Z} } \mathcal{C}_h e^{-i h \omega}, \ \ \ \omega \in (-\pi,\pi],\]
where ${\mathcal F}_\omega$  is  a continuous in $\omega$ and bounded; see  \cite{PAN} and \cite{hormann2015dynamic}.  Our aim  is to approximate the distribution of the partial sum process 
$ (Z_n(t))_{ t \in [0,1]}$ defined in (\ref{eq.DefPS}) using the   FSB  procedure. 

To elaborate,  
consider for 
  any $ m\in\mathbb{N}$,  the $m$-dimensional vector of scores 
$$ \xi_s(m)=(\xi_{s,1}, \xi_{s,2}, \ldots, \xi_{s,m})^\top, \ \ t\in \mathbb{Z}.$$
Here 
$\xi_{j,t} =\langle X_t,v_j\rangle $ and $ v_j$, $j=1,2, \ldots$  denote the (up to a sign chosen) orthonormalized eigenfunctions associated to the eigenvalue $ \lambda_j$, $j=1,2, \ldots$,   of the lag zero autocovariance operator ${\mathcal C}_0=E(X_t-EX_0)\otimes (X_t-EX_0)$. We assume that these  eigenvalues  are in descending order, that is  $ \lambda_1 >\lambda_2 > \lambda_3 > ...$, and that they are all distinct. 
 Observe that $\{ \xi_t(m), t\in \mathbb{Z} \}$ obeys  a so-called  vector autoregressive (VAR)  representation, see \cite{cheng1993baxter} and \cite{PAP}, that is, 
\begin{equation} \label{eq.xi-VAR}
 \xi_t(m) = \sum_{j=1}^\infty A_j(m) \xi_{t-j}(m) + e_t(m), \ \  t \in \mathbb{Z},
\end{equation}
where  $\{e_t(m)=(e_1(m), e_2(m), \ldots, e_{m}(m))^\top, t\in\mathbb{Z}\}$  is a $m$-dimensional, white noise process with mean zero and covariance matrix $ \Sigma_e(m)$. We write for short    $ e_t(m)\sim  WN(0,\Sigma_e(m))$. The sequence $ \{A_{j}(m),j\in \mathbb{N}\}$ of $m\times m$ coefficient matrices  satisfies $ \sum_{j\in\mathbb{N}} \|A_j(m)||_F <\infty$.
Truncating  the well-known Karhunen-Loeve  representation we can write  
 \begin{equation}
 \label{eq.FSB-dec}
  X_t=\sum_{j=1}^m \xi_{j,t}v_j + U_{t,m}, 
  \end{equation}
 where  $U_{t,m} =\sum_{j=m+1}^\infty \xi_{j,t}v_j.$  In the above decomposition, we  consider $ X_{t,m}:= \sum_{j=1}^m \xi_{j,t}v_j$ as the  main ``driving force'' of the random element $X_t$ and treat  the ``remainder'' $U_{t,m}$ as a noise term; see \cite{PAP}.
 The FSB uses expression (\ref{eq.FSB-dec}) to  generate new functional pseudo time series $X_1^\ast, X_2^\ast, \ldots, X_n^\ast$ which is  then applied to  estimate distribution of the  partial sum process (\ref{eq.DefPS}).
 The corresponding  algorithm  is described in  the next  section. 

\subsection{ The FSB  Proposal} \label{sec.FSB}
\begin{enumerate}
\setlength{\itemsep}{3pt}
\item[]\hspace*{-0,75em}{\bf Step 1:} \  Select a non-negative  integer $m$ and denote by 
$$  \widehat{\xi}_s(m)=(\widehat{\xi}_{s,1},\widehat{\xi}_{s,2}, \ldots,\widehat{\xi}_{s,m})^\top,  \ \  t=1,2, \ldots, n,$$ 
the vector of estimated  scores, $ \widehat{\xi}_{j,s}=\langle X_s,\widehat{v}_j \rangle$, where $ \widehat{v}_j$ denotes the estimated (up to a sign) orthonormalized eigenfunction associated to the estimated 
eigenvalue $\widehat{\lambda}_j$, $j=1,2, \ldots, m$, of the sample lag zero autocovariance operator $\widehat{\mathcal C}_0=n^{-1}\sum_{t=1}^n (X_t-\bar{X}_n) \otimes (X_t-\bar{X}_n)$. 
\item[]\hspace*{-0,75em}{\bf Step 2:} \  Select an order $p$ and  fit  to  the estimated series of scores $   \widehat{\xi}_t(m) $, $ t=1,2, \ldots, n$,    the VAR(p) model
\[   \widehat{\xi}_t(m) =\sum_{j=1}^p \widehat{A}_{j}(m)   \widehat{\xi}_{t-j}(m) + \widehat{e}_t(m),\]
$t=p+1, p+2, \ldots, n$, where $ \widehat{A}_{j}(m)$, $j=1,2, \ldots, p$, are the Yule-Walker estimators; see  \cite{brockwell1991time}, Chapter 11.
\item[]\hspace*{-0,75em}{\bf Step 3:} 
Generate pseudo random elements  $ X^\ast_1, X^\ast_2, \ldots, X^\ast_n$, as
$$ X_t^\ast= X_{t,m}^\ast + U_{t,m}^\ast,$$
where  the two functional   components  $ X_{t,m}^\ast$ and $ U^\ast_{t,m}$ appearing above are generated as follows:
\begin{enumerate}
\item[(i)]
  $X_{t,m}^\ast =\sum_{j=1}^m  \xi_{j,t}^\ast \hat{v}_j$  with  $ \xi_{j,t}^\ast$ the $j$th component of the vector  $  \xi^\ast_{t}(m)=(\xi_{1,t}^\ast, \xi_{2,t}^\ast, \ldots, \xi_{m,t}^\ast)^\top$,
\[ \xi_{t}^\ast (m)=\sum_{j=1}^p \widehat{A}_{j}(m) \xi_{t-j}^\ast (m)  +e_t^\ast(m),\]
with  $ e_t^\ast(m)$ pseudo  innovations  generated by i.i.d. resampling   from the  empirical distribution function of the centered residuals. That is, define $ \widetilde{e}_{t}(m) =\widehat{e}_{t}(m) - \overline{e}(m)$, where  
$ \overline{e}(m)=(n-p)^{-1}\sum_{t=p+1}^n \widehat{e}_t(m)$ and let $I_1,....,I_n$ be i.i.d. uniformly on $\{p+1,...,n\}$. Then set $e_t^\ast(m)=\widetilde{e}_{I_t}(m)$. 
 \item[(ii)] $ U_{t,m}^\ast$ is obtained by i.i.d. resampling from the set of  estimated  and centered "functional remainders"   $\widehat{U}_{t,m}-  \overline{U}_{m}$, $t=1,2, \ldots, n$, where $ \widehat{U}_{t,m}=X_t-\widehat{X}_{t,m} $,   
$ \widehat{X}_{t,m} = \sum_{j=1}^m \widehat{\xi}_{j,t} \widehat{v}_j$ and  $ \overline{U}_{m}= n^{-1}\sum_{t=1}^n \widehat{U}_{t,m}$. 
\end{enumerate}
\item[]\hspace*{-0,75em}{\bf Step 4:} \ Define  $ Z_{n,m}^\ast:= (Z^\ast_n(t))_{ t\in[0,1]}$, where
\begin{equation}  \label{eq.DefPS-Boot}
Z_{n,m}^\ast (t) = n^{-1/2} \sum_{t=1}^{\lfloor nt\rfloor} X^\ast_t.
\end{equation}
\end{enumerate}


\subsection{Bootstrap validity}

To investigate   the  consistency  behavior of  $ Z^\ast_{n,m}$ as an estimator  of  the distribution of $ Z_n$,  some assumptions have to be made regarding the stochastic structure of  the underlying process 
$\{X_s, s\in\mathbb{Z}\}$ and the behavior  of the  FSB tuning parameters $m$ and $p$.  
Toward this goal,   we make use of   the   fourth order cumulant operator  of the functional process $ {\mathcal X}$  which is  defined for any $h_1,h_2,h_3\in {\mathbb Z}$,  as 
\begin{align*}
cum& ( X_{h_1}, X_{h_2},X_{h_3},X_0) =E[(X_{h_1}\otimes X_{h_2})\otimes(X_{h_3}\otimes X_0)] \\
& - E[X_{h_1}\otimes X_{h_2}]\otimes E[X_{h_3}\otimes X_0]-E[X_{h_1}\otimes X_{h_3}]\otimes_{op} E[X_{h_2}\otimes X_0] \\
& - E[X_{h_1}\otimes X_{0}]\otimes^\top_{op} E[X_{h_2}\otimes X_3]
\end{align*}
In the above expression and  for linear operators $L_j :  {\mathcal H}\rightarrow {\mathcal H}$,  $j=1,2,3$, the following definitions are used:     $ L_1\otimes_{op}L_2(L_3) :=L_1L_3 L_2^\ast $ and $ L_1\otimes^\top_{op}L_2(L_3) := L_1L_3^\top L_2^\top$, where  $ L^\ast$ is  the adjoint operator and $ L^\top$ the transposed operator of   $ L$; see \cite{rademacher2024asymptotic} for  more details.

We start with the following assumption which  summarizes our requirements on the properties   of the underlying  functional process and 
which uses the notion of $L^p$--$M$ approximability; see \cite{hoerkok09} and \cite{berkes2013weak}. To elaborate, suppose that for the functional process $ \{X_s,s\in {\mathbb Z} \}$   
the random element $X_s$  admits the representation $X_s = f(\varepsilon_s, \varepsilon_{s-1},\ldots )$,  where  the $\varepsilon_s$'s   are i.i.d. random elements in ${\mathcal H}$,  $f$ is some measurable function
$ f : {\mathcal H}^\infty \rightarrow {\mathcal H}$ and $ E\|X_s\|^p<\infty$ for some $p\in {\mathbb N}$.  If for an independent copy $\{  \widetilde{\varepsilon}_s, s\in {\mathbb Z}\} $   of  $ \{ \varepsilon_s, s\in {\mathbb Z}\}$ and 
\begin{equation*}
X_s^{(M)} = f(\varepsilon_s, \varepsilon_{s-1},\ldots,  \varepsilon_{s-M-1}, \widetilde{\varepsilon}_{s-M}, \widetilde{\varepsilon}_{s-M-1},\ldots ),\end{equation*}
 the condition $ \sum_{k=1}^\infty \big(E\|X_k-X_k^{(M)} \|^p\big)^{1/p} <\infty$ is satisfied, then the process  ${\mathcal X}$  is called $L^p$--$M$ approximable. 

\begin{assumption}\label{ass1}
\begin{enumerate}
\item[(i)] \ ${\mathcal X}=(X_s,s\in\mathbb{Z})$ is purely nondeterministic, $ L^4$-$M$ approximable process satisfying
\[ \sum_{h\in \mathbb{Z}} (1+|h|)\|C_h\|_{N} <\infty \ \text{ and } \ \sum_{h_1,h_2,h_3\in  \mathbb{Z}}\| Cum_{h_1,h_2,h_3}\|_N <\infty,\]
where  $ Cum_{h_1,h_2,h_3} =cum(X_{h_1}, X_{h_2}, X_{h_3},X_0)$ is the fourth order cumulant operator of $ {\mathcal X}$.
\item[(ii)] The spectral density operator $ {\mathcal F}_\omega $ of the process $ {\mathcal X}$ is of full rank, that is, $ {\mbox ker}({\mathcal F}_\omega)=0$, for all $\omega \in [0,\pi]$. 
\item[(iii)] For any $m\in {\mathbb N} $,  let $ G_{e}^{(m)}$  be  the marginal distribution function of $e_t(m)$ (which by  part (i) of the assumption  does not depend on $t$).  
For any   $K\in {\mathbb N}$, $K<m$,  denote by  $ G_{e,K}^{(m)}$ the distribution function of the first $K$  components of  the vector $e_t(m)$, that is of the vector $ (e_1(m), e_2(m), \ldots, e_{K}(m))^\top$. 
Then, as $ m \rightarrow \infty$,  $ G_{e,K}^{(m)} \rightarrow G_{e,K} $,  where   $G_{e,K}$ is continuous. 

\end{enumerate}
\end{assumption}


\begin{rem}
\begin{enumerate}
\item[]
\item[(i)]\ 
Observe that  $ \gamma_{l_1,l_2}(h) :=  Cov(\xi_{l_1,0},\xi_{l_2,h}) = \langle C_h(v_{l_1}),v_{l_2}\rangle $ and therefore, Assumption \ref{ass1}(i) implies that for all $ l_1,l_2 \in {\mathbb N}$
\[ \sum_{h\in{\mathbb Z}}(1+|h|) |\gamma_{l_1,l_2}(h)| =  \sum_{h\in{\mathbb Z}}(1+|h|)  |\langle C_h(v_{l_1}),v_{l_2}\rangle| \leq \sum_{h\in{\mathbb Z}}(1+|h|)  \Vert C_h \Vert_N <\infty. \]
\item[(ii)] \ 
 Let  $ cum(\xi_{l_1,h_1}, \xi_{l_2,h_2},\xi_{l_3,h_3},\xi_{l_4,0})$ be the fourth order cumulant of the  scores processes $ \{\xi_{l_j,t}, t \in\mathbb{Z}\}$, $ l_1,l_2,l_3,l_4 \in \mathbb{N}$.  Then, Assumption \ref{ass1}(ii) implies that 
\[ 
\sum_{h_1,h_2,h_3 \in {\mathbb Z}}|cum(\xi_{l_1,h_1}, \xi_{l_2,h_2},\xi_{l_3,h_3},\xi_{l_4,0})|  < \infty.\]
This follows because, 
\begin{align*}
\big|cum(\xi_{l_1,t_1}, & \xi_{l_2,t_2},\xi_{l_3,t_3},\xi_{l_4,t_4})\big| \\ 
&=\big|cum(\langle X_{t_1}, v_{l_1}\rangle , \langle X_{t_2},v_{l_2}\rangle, \langle X_{t_3},v_{l_3}\rangle, \langle X_{t_4},v_{l_4}\rangle)\big|\\
& =| \langle cum(X_{t_1},X_{t_2},X_{t_3},X_{t_4}), (v_{l_1}\otimes v_{l_2}) \otimes (v_{l_3}\otimes v_{l_4}) \rangle |\\
& \leq \big\| cum(X_{t_1},X_{t_2},X_{t_3},X_{t_4})\big\|_{N} \big\| (v_{l_1}\otimes v_{l_2}) \otimes (v_{l_3}\otimes v_{l_4}) \big\|\\
&=\big\| cum(X_{t_1},X_{t_2},X_{t_3},X_{t_4})\big\|_{N}.
\end{align*}
\item[(iii)] \ Notice that when the dimension $m$  of  the vector autoregressive representation (\ref{eq.xi-VAR}) increases through adding new elements (scores) to the vector $ \xi_t(m)$,  the corresponding  vector of white noise innovations  $e_t(m)$ may entirely  change. Part (iii) of  Assumption \ref{ass1}
ensures  that  despite such changes, the  distribution function of any  fixed number of  the first  $K$ components of the vector of white noise innovations converges  to a continuous distribution function as $m$ increases to infinity.  
\end{enumerate}
\end{rem}


Additional to Assumption \ref{ass1},  the parameters $m$ and $p$ involved in the FSB algorithm have  to increase to infinity at a controlled rate, as the sample size $n$ increases to infinity. 
Recall that $m$ determines the  number of   principal components used to approximate the infinite dimensional  score process $ \xi_s=(\xi_{1,s}, \xi_{2,s}, \ldots)^\top$, while  $p$ determines the finite order of the VAR process used to approximate the infinite order VAR representation (\ref{eq.xi-VAR}).  In order to capture this  infinity dimensional nature      of both components,   the parameters $ m$ and $ p$ have to increase to infinity with  the sample size $n$. This however has to  be done in a proper way which has to  take into account a number of  issues including  the dependence characteristics of the underlying process ${\mathcal X}$, the fact that the parameter matrices $A_{j,p}(m) $ of the VAR(p) process have to be  estimated and   that  the corresponding  estimates are based on the estimated  scores $ \widehat{\xi}_{j,s}$  and not on the unobserved random variables  $\xi_{j,s}$, $j=1,2 \ldots, m$,  $s=1,2, \ldots, n$. 
Our requirements concerning this part of the FSB algorithm  are summarized in the following  assumption.


\begin{assumption}\label{ass2} The sequences $ m=m(n)$ and $ p=p(n)$ increase to infinity as $n$ increases to infinity such that:
\begin{enumerate}
\item[(i)] \ $ \frac{\displaystyle m^{3/2}}{\displaystyle p^{1/2}} =O(1)$.
\item[(ii)] \  $\frac{\displaystyle p^7}{\displaystyle \sqrt{n}\lambda_m^2} \sqrt{\sum_{j=1}^m \alpha_j^{-2} } \rightarrow 0$, where $ \alpha_1=\lambda_1-\lambda_2$ and $ \alpha_j =\min\{\lambda_{j-1}-\lambda_j, \lambda_j-\lambda_{j+1}\}$ for $ j=2,3, \ldots, m$.
\item[(iii)] \  $\delta_m^{-1}\sum_{j=p+1}^\infty j \|A_j(m)\|_F \rightarrow 0$, where $ \delta_m>0$ is the lower bound of the spectral density matrix $f_\xi$ of the $m$-dimensional  score process $ \{\xi_t, t\in {\mathbb Z}\}$.
\item[(iv)] \  Let $ \tilde{A}_{p,m}=(\tilde{A}_{j,p}(m), j=1,2, \ldots,p)$  be the estimators of $  (A_{j,p}(m),  j=1,2 \ldots, p)$, obtained by the same method as $ \hat{A}_{j,p}(m)$ but based on the time series of true scores $ \xi_1, \xi_2, \ldots, \xi_n$. Then,
$m^4p^2 \|\tilde{A}_{p,m}- A_{p,m}\|_F =\mathcal{O}_{P}(1)$. 
\end{enumerate}
\end{assumption}

Consider now  the bootstrap partial sum process $(Z^\ast_{n,m}(t))_{t\in[0,1]}$, where 
$Z^\ast_{n,m}(t)$ is defined in (\ref{eq.DefPS-Boot}) 
and  for $n\in {\mathbb N}$, the pseudo time series $ X_1^\ast, X_2^\ast, \ldots, X_n^\ast$, is generated as in Step 3 of  the bootstrap algorithm presented in Section~\ref{sec.FSB}. 
Then,   the following weak convergence result holds true:

\begin{thm} \label{th.boot} 
Under Assumptions \ref{ass1} and \ref{ass2} we have that, as $n\rightarrow\infty$, 
\[ Z^\ast_{n,m} \Rightarrow W, \text{ in probability},  \]
where $ W$ is a Brownian motion  in ${\mathcal H}$ and the  covariance operator of  $ W(1)$ coincides with the covariance operator  $ C_W$    given in  (\ref{eq.CovW}).
%
\end{thm}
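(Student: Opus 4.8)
The plan is to establish the two standard ingredients for weak convergence of an $\mathcal{H}$-valued process on $[0,1]$: convergence of the finite-dimensional distributions of $Z^\ast_{n,m}$ to those of the Gaussian limit $W$, and tightness of $(Z^\ast_{n,m})_{n}$, both understood conditionally on the sample $X_1,\dots,X_n$ and valid on a sequence of events of probability tending to one. A convenient decomposition is $Z^\ast_{n,m}(t)=A_{n,m}(t)+R_{n,m}(t)$, where $A_{n,m}(t)=n^{-1/2}\sum_{i=1}^{\lfloor nt\rfloor}X^\ast_{i,m}=\sum_{j=1}^{m}\widehat v_j\big(n^{-1/2}\sum_{i=1}^{\lfloor nt\rfloor}\xi^\ast_{j,i}\big)$ is driven by the fitted VAR and $R_{n,m}(t)=n^{-1/2}\sum_{i=1}^{\lfloor nt\rfloor}U^\ast_{i,m}$ comes from the resampled functional remainders. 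Conditionally on the data the $U^\ast_{i,m}$ are i.i.d.\ with mean zero and $\E^\ast\|U^\ast_{1,m}\|^2=n^{-1}\sum_{t=1}^n\|\widehat U_{t,m}-\overline U_m\|^2$, which by consistency of $\widehat{\mathcal C}_0$ and its eigenelements equals $\sum_{j>m}\lambda_j+o_P(1)$ and hence tends to zero since $\mathcal C_0$ is nuclear. Doob's $L^2$ maximal inequality for the $\mathcal H$-valued martingale $k\mapsto\sum_{i\le k}U^\ast_{i,m}$ then gives $\E^\ast\sup_{t\in[0,1]}\|R_{n,m}(t)\|^2\le 4\,\E^\ast\|U^\ast_{1,m}\|^2\to 0$ in probability, so $R_{n,m}$ is asymptotically negligible and it remains to analyse $A_{n,m}$.

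For $A_{n,m}$ I would first show that replacing, at a cost that is $o_P(1)$ uniformly in $t\in[0,1]$, the estimated eigenfunctions $\widehat v_j$ by $v_j$ and the Yule--Walker estimates $\widehat A_j(m)$ (based on the estimated scores) by the estimates $\tilde A_j(m)$ based on the true scores and then by the population matrices $A_{j,p}(m)$, leaves the limit unchanged; this is exactly the purpose of Assumptions \ref{ass2}(i), (ii) and (iv), since the resulting error terms are sums over $j=1,\dots,m$ of eigenfunction perturbations governed by the gaps $\alpha_j$ and of VAR-coefficient perturbations accumulated over up to $n$ time points. One is then left with the partial sum process of the stationary $m$-dimensional VAR($p$) process $\xi^\ast_t(m)$ with (nearly) true coefficients and i.i.d.\ innovations drawn from the centred residuals. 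To obtain its finite-dimensional limits, truncate at a fixed dimension $K$: for the first $K$ coordinates a multivariate invariance principle for bootstrap VAR partial sums applies --- a conditional Lindeberg condition holds because $\E^\ast\|e^\ast_1(m)\|^2=O_P(1)$ (it is dominated by $\operatorname{tr}\Sigma_e(m)\le\operatorname{tr}\mathcal C_0$) and the resampling is from an empirical measure, after a truncation if a uniform fourth-moment bound is needed --- yielding a $K$-dimensional Brownian motion with covariance the $K\times K$ block of $\widehat\Phi(1)\widehat\Sigma_e(m)\widehat\Phi(1)\Transp$, where $\widehat\Phi(1)=(I_m-\sum_{j=1}^p\widehat A_j(m))^{-1}$. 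Using Assumption \ref{ass2}(iii) to neutralise the error from truncating the infinite-order VAR representation (\ref{eq.xi-VAR}) at lag $p$, together with the preceding estimation-error step, this block converges in probability to $\big(2\pi f_{r,s}(0)\big)_{r,s\le K}$. The contribution of the coordinates $K<j\le m$ to $\|A_{n,m}(t)\|^2$ is bounded, by stationarity and summability of the bootstrap autocovariances, by $\sum_{j>K}$ of the associated bootstrap long-run variances, which is uniformly (in $n$, for $n$ large) small once $K$ is large because of the tail behaviour inherited from Assumption \ref{ass1}(i). Letting $n\to\infty$ and then $K\to\infty$, the latter limits made rigorous by a diagonal argument, identifies the finite-dimensional limits as those of the Gaussian process with covariance operator $C_W$ of (\ref{eq.CovOp}).

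Tightness of $A_{n,m}$ (hence of $Z^\ast_{n,m}$) I would deduce from a moment bound on increments of the form $\E^\ast\big\|A_{n,m}(t)-A_{n,m}(s)\big\|^{4}\le C\big((\lfloor nt\rfloor-\lfloor ns\rfloor)/n\big)^{2}$ for $s\le t$, holding on the good event; this follows from bootstrap analogues of the second- and fourth-order summability conditions in Assumption \ref{ass1}(i), which transfer to $\xi^\ast_t(m)$ because the fitted VAR has absolutely summable coefficients and i.i.d.\ innovations with a controlled fourth moment. Combining this modulus-of-continuity control with the finite-dimensional convergence above and with the $\mathcal H$-valued tail estimate (the same $\sum_{j>K}\lambda_j$-type bound used for $R_{n,m}$ and for the coordinates $K<j\le m$ shows that, up to an arbitrarily small perturbation, the processes live in a fixed finite-dimensional subspace) yields the asserted weak convergence $Z^\ast_{n,m}\Rightarrow W$ in probability and pins down the covariance operator of $W(1)$ as $C_W$.

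The step I expect to be the main obstacle is the uniform control, as $m=m(n)\to\infty$, of how the eigenfunction estimation error and the VAR estimation-plus-truncation error propagate through the partial sum operator: one must show these errors, summed over all $m$ coordinates and accumulated over up to $n$ time points, are $o_P(\sqrt n)$ so that $A_{n,m}$ behaves like the partial sum of the true $m$-dimensional VAR, and this is precisely what the rate conditions of Assumption \ref{ass2} enforce, via a combination of perturbation bounds for empirical covariance operators and Baxter-type inequalities controlling $\sum_{j>p}\|A_j(m)\|_F$. A secondary delicate point is reconciling the two successive limits ($n\to\infty$, then the truncation level $K\to\infty$) with the conditioning ``in probability'', which I would handle by selecting, for each $K$, an event on which all the finitely many required convergences hold and then passing to a diagonal subsequence.
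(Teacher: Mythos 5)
Your overall architecture---peel off the resampled remainder $U^\ast_{t,m}$, undo the eigenfunction and VAR-coefficient estimation errors, truncate the VAR order and the dimension, and finish with a finite-dimensional invariance principle---is the same skeleton the paper uses, and several individual steps are sound: the Doob $L^2$ bound for $R_{n,m}$ is a clean alternative to the paper's use of Serfling's maximal inequality for that (genuinely i.i.d.) piece, and identifying the limiting covariance block as $\big(2\pi f_{r,s}(0)\big)_{r,s\le K}$ through $\hat\Phi(1)\hat\Sigma_e(m)\hat\Phi(1)^\top$ is exactly what the paper extracts from Lemmas 6.1--6.5 of \cite{PAP}. The real divergence is at the end. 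The paper does \emph{not} prove a conditional fdd-plus-tightness theorem for the triangular array of bootstrap processes. It instead couples the resampled innovations to a fixed i.i.d.\ sequence in the Mallows $d_2$ metric---this is where Assumption~\ref{ass1}(iii), which your argument never touches, is consumed---and thereby reduces everything to the partial sums of a single fixed process $\xi^\circ$, to which a known invariance principle (Theorem A.1 of \cite{AUE}) applies; the sup-norm cost of the coupling is then controlled by a maximal inequality of \cite{wu2007strong}.

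The concrete gap is in your uniform-in-$t$ control of the dependent, infinite-dimensional part. Your statement that ``the contribution of the coordinates $K<j\le m$ to $\|A_{n,m}(t)\|^2$ is bounded by $\sum_{j>K}$ of the associated bootstrap long-run variances'' is a pointwise second-moment bound; weak convergence requires $\sup_{t\in[0,1]}$ of this quantity to be small, and the partial sums of a VAR process are not a martingale, so Doob is unavailable there. This is precisely why the paper imports Serfling's Theorems A and B (maximal inequalities needing only superadditive second- or fourth-moment bounds for the blocks) and why the fourth-order cumulant summability in Assumption~\ref{ass1}(i) is needed: it yields the fourth-moment maximal bound for the tail coordinates in Lemma~\ref{le.lemma3} and in step III of the proof of Lemma~\ref{le.lemma4}. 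Relatedly, the claim that the \emph{bootstrap} long-run variances of high coordinates are summably small ``because of the tail behaviour inherited from Assumption~\ref{ass1}(i)'' needs an argument: these quantities involve the estimated eigenfunctions and the inverse of the fitted VAR polynomial, and the paper only controls them after passing, via the coupling, to population quantities and a process that does not depend on $n$. Your fourth-moment increment bound for tightness likewise presupposes a uniform-in-$n$ fourth moment of the resampled innovations, which must be extracted from the cumulant condition rather than asserted. In short, the plan is viable in outline, but the maximal-inequality and coupling machinery for the dependent part---the technical heart of the paper's proof---is exactly what is missing from it.
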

By the term  ``$\Rightarrow$ in probability", we mean that for any subseries $(n_k)_{k\in\mathbb{N}}$ of the natural numbers, there exists a subsubseries $(n_{k_l})_{l\in\mathbb{N}}$, such that the distribution of the bootstrapped partial sum process conditional on $X_1,X_2, \ldots, X_{n_{k_l}}$ converges weakly to the distribution of $W$ with probability 1 on this subsubseries.


\section{Change Point Detection}
\label{sec.ChPoint}

\subsection{Behavior under the Null-Hypothesis} \label{sec.H0}

In this  section we  denote by  $Y_1,...,Y_n$ the functional observations at hand and  where we assume that they  are obtained as 
\begin{equation*}
Y_s=\begin{cases}
X_s \ \ \ &\text{for }s \leq k^\star\\
X_s+\mu&\text{for }s >k^\star
\end{cases}
\end{equation*}
for some (unknown) $k^\star\in\{1,...,n-1\}$ and for some $\mu\in {\mathcal H}$. Our interest is focused on the problem of testing the null-hypotheses ${\rm  H}_0$ of mean stationarity against the alternative ${\rm  H}_1$ of a single  change-point, that is
\begin{equation*}
{\rm  H}_0: \ \mu=0 \ \ \text{ against } \ {\rm  H}_1: \ \mu\neq 0.
\end{equation*}

Suppose that $H_0$ is true. Under this assumption we have $Y_s=X_s$ for $s=1,2, \ldots,n$ and the  functional CUSUM-test statistic can be rewritten as
\begin{align} \label{eq.teststat} 
T_n & =\max_{1\leq k <n} \frac{1}{\sqrt{n}} \Big\| \sum_{i=1}^kY_i -\frac{k}{n}\sum_{j=1}^n Y_j\Big\|  =\max_{1\leq k <n} \frac{1}{\sqrt{n}} \Big\| \sum_{i=1}^kX_i -\frac{k}{n}\sum_{j=1}^n X_j\Big\|  \\
& =\max_{1\leq k <n}\Big\|Z_n(k/n)-\frac{k}{n}Z_n(1)\Big\|.
\end{align} 
Under Assumption~\ref{ass1}  of Section~\ref{sec.Sec2}, we have that  the weak convergence result (\ref{eq.WeakConv}) holds true.
By the continuous mapping theorem, it  then follows   that if $H_0$ is true,  then 
\begin{equation} \label{eq.limit-Tn}
T_n \Rightarrow \sup_{t\in [0,1]} \| W(t) - t W(1)\|.
\end{equation}
Observe  that the covariance operator  $C_W:{\mathcal H} \rightarrow {\mathcal H}$ of $W(1)$, see   (\ref{eq.CovW}),  also can be written as 
\begin{equation*}
\langle C_W(x),y \rangle  = 2\pi \sum_{r=1}^\infty \sum_{s=1}^\infty f_{r,s}(0) \langle v_r,x\rangle \langle v_s,y\rangle, \label{eq.CovOp}
\end{equation*} 
for any $ x,y\in{\mathcal H}$. The complex-valued functions  $f_{r,s}$ appearing in the equation above are the cross-spectral densities  of the two score processes $\{ \xi_{r,t}, t\in\mathbb{Z} \}$ and $\{ \xi_{s,t}, t\in \mathbb{Z}\}$.  Notice that $f_{r,s}(\omega)  = \frac{1}{2\pi} \sum_{h=-\infty}^\infty \Cov(\xi_{0,r},\xi_{h,s})\exp(-ih\omega)   = \langle {\mathcal F}_\omega(v_r), v_s \rangle$
for $r,s\in{\mathbb N}$.

By the weak convergence result (\ref{eq.limit-Tn}),  an asymptotically  valid  testing procedure   at  any   level $ \alpha\in (0,1)$   is obtained by rejecting $H_0$ if  $ T_n \geq C_{1-\alpha}$, where   $ C_{1-\alpha}$  denotes  the  upper $\alpha$-quantile of the distribution of  $ \sup_{t\in [0,1]} \| W(t) - t W(1)\|$.\\  
Based on Theorem~\ref{th.boot}, we can now apply the FSB to estimate critical values of the $T_n$ test. In particular,  denote by $T_{n,m}^\ast$  the bootstrap analogue of $T_n$, which is given by  
\[ T_{n,m}^\ast = \max_{1\leq k <n} \frac{1}{\sqrt{n}} \big\| \sum_{s=1}^kX^\ast_s -\frac{k}{n}\sum_{s=1}^n X^\ast_s\big\| \]
and $ X_1^\ast, X_2^\ast, \ldots, X_n^\ast$ is generated as in the FSB algorith presented in Section~\ref{sec.FSB}. 
Let   $ C^\ast_{1-\alpha}$  be the upper $\alpha$-percentage point of the distribution of $ T^\ast_{n,m}$, that is,    $ P^\ast(T^\ast_{n,m} \geq C^\ast_{1-\alpha}) =\alpha$. The  FSB-bsed test rejects then  $ H_0$, if  $ T_n \geq C^\ast_{1-\alpha}$. Theorem~\ref{th.boot} together with  the continuous mapping  lead to  the following result:
 
 \begin{cor}  \label{cor.FSB} Under the assumptions of Theorem~\ref{th.boot} it holds true as $n\rightarrow\infty$,  
\[  d_\infty\big(T^\ast_{n,m} , T_n\big) \rightarrow 0, \ \ \mbox{in probability},\]
   where $d_\infty$ denotes Kolmogorov's distance between the distributions of the  random variables $T_{n,m}^\ast$ and $ T_n$, respectively. 
 \end{cor}  
   
  Observe that  the above  validity of the FSB procedure for  consistently estimating the distribution  of   $T_n$  holds true whenever   the  statistic $T_n$  obeys the limiting behavior described in equation   (\ref{eq.limit-Tn}). Together with the continuity of the limit distribution of the random variable
  $ T_n$, this result  also implies that  
   \[ P(T_n \geq C^\ast_{1-\alpha}) \rightarrow \alpha,\]
 in probability, as $ n \rightarrow \infty$, that is,  the FSB  based test achieves (asymptotically) the desired level $\alpha$.

\subsection{Consistency under local alternatives} \label{sec.H1}

Let us now discuss   the local power properties of the FSB  based testing procedure presented in Section~\ref{sec.H0}.  For this consider  unobserved functional time series 
$X_1, \ldots ,X_n$  and denote  the observed  time series by  $Y_1, Y_2, \ldots Y_n$, where  $Y_i$ has  a change of order $ O(1/n^r)$ at some unknown time point $k^\ast$, that is,
\begin{equation} \label{eq.LocAlt}
Y_s=\begin{cases}X_s \ &\text{for }s\leq k^\ast\\X_s+ n^{-r} \mu \ &\text{for }s> k^\ast. \end{cases}
\end{equation}
Here $\mu\in {\mathcal  H}$ with $\mu\neq 0$, $r\in(0,1)$ and $k^\ast=\lfloor nt^\ast\rfloor$ for some fixed $t^\ast\in(0,1)$.  Notice that  in this set up, the testing problem becomes more difficult as $ n$ increases to infinity,  since the magnitude of the change  shrinks to zero.  
As before, consider  then the partial sum process $(Z_{n,X}(t))_{t\in[0,1]}$, with $Z_{n,X}(t)=n^{-1/2}\sum_{i=1}^{\lfloor nt \rfloor}(X_i-\E X_i)$. If $(Z_{n,X}(t))_{t\in[0,1]}\Rightarrow W$ as $n\rightarrow\infty$ and \eqref{eq.LocAlt}  with $r=1/2$ is satisfied, then  the following holds true for the  observed functional time series $Y_1,Y_2, \ldots, Y_n$:
\begin{equation} \label{eq.WConY}
\max_{k=1,...,n} \frac{1}{\sqrt{n}}\Big\|\sum_{i=1}^k(Y_i-\bar{Y}_n)\Big\|\Rightarrow \sup_{t\in[0,1]}\big\|W(t)-tW(1)+g(t)\mu\big\|,
\end{equation}
where  $\overline{Y}_n=n^{-1}\sum_{i=1}^n Y_i$ and 
\begin{equation*}
g(t)=\begin{cases}t(1-t^\ast) \ \ \text{for }t\leq t^\ast\\ t^\ast(1-t) \ \text{for }t> t^\ast. \end{cases}
\end{equation*}
The above result follows by the continuous mapping theorem and  Corollary 2  of \cite{STW16}. However, if \eqref{eq.LocAlt} holds with $r<1/2$, then $T_n=\max_{k=1,...,n} \frac{1}{\sqrt{n}}\|\sum_{i=1}^k(Y_i-\bar{Y}_n)\|$ converges to the same limit as under the ${\rm  H}_0$. Furthermore,  $T_n\rightarrow \infty$ for $ r>1/2$.

Now, let $Z_{n,Y}^\ast$ be the bootstrap version of the partial sum defined by  
\[  Z_{n,Y}^\ast(t)=\frac{1}{\sqrt{n}}\sum_{i=1}^{\lfloor nt \rfloor}Y_i^\ast,\] 
 where the bootstrap pseudo observations $Y_1^\ast, Y_2^\ast, \ldots, Y_n^\ast$  are generated  by applying to the 
 observed functional time series $Y_1, Y_2, \ldots, Y_n$   the same FSB  procedure as the one used  to generate  $X_1^\ast, X_2^\ast, \ldots, X_n^\ast$ in Section~\ref{sec.H0}. We then have the following result:

\begin{thm}\label{thm:altloc} Under the assumptions of Theorem \ref{th.boot} and  the validity of  Model \eqref{eq.LocAlt} with $r>1/4$, we have that
\begin{equation*}
\left(Z_{n,Y}^\ast(t)\right)_{t\in[0,1]}\Rightarrow W
\end{equation*}
and 
\begin{equation} \label{eq.WConYBoot}
T_{n,m}^\ast:= \max_{k=1,...,n} \frac{1}{\sqrt{n}}\Big\|\sum_{i=1}^k(Y_i^\ast-\bar{Y}_n^\ast)\Big\|\Rightarrow \sup_{t\in[0,1]} \big\|W(t)-tW(1) \big\|,
\end{equation}
in probability. 
\end{thm}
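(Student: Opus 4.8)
The plan is to show that under \eqref{eq.LocAlt} the local mean shift enters every quantity computed by the FSB algorithm of Section~3.1 only through terms that are asymptotically negligible relative to the statistical fluctuations already handled in the proof of Theorem~\ref{th.boot}, provided $r>1/4$; once this is established, the proof of Theorem~\ref{th.boot} applies almost verbatim to the $Y$-based bootstrap and yields the claimed limits. The starting point is the decomposition: with $k^\ast=\lfloor nt^\ast\rfloor$ and $p_n=(n-k^\ast)/n\to 1-t^\ast$,
\[
Y_t-\bar Y=(X_t-\bar X)+d_t,\qquad d_t:=n^{-r}\mu\,\big(\mathbf{1}\{t>k^\ast\}-p_n\big),
\]
where $d_t$ is deterministic given $k^\ast$, $\sum_{t=1}^n d_t=0$, and $\max_{1\le t\le n}\|d_t\|\le\|\mu\|\,n^{-r}$. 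Expanding the sample lag-zero covariance operator of Step~1,
\[
\widehat{\mathcal C}^{\,Y}_0-\widehat{\mathcal C}^{\,X}_0=\frac1n\sum_{t=1}^n\big[(X_t-\bar X)\otimes d_t+d_t\otimes (X_t-\bar X)\big]+\frac1n\sum_{t=1}^n d_t\otimes d_t .
\]
Using $\sum_t(X_t-\bar X)=0$, the bracketed term equals $n^{-r}$ times $n^{-1}\sum_{t>k^\ast}(X_t-\bar X)$ and is therefore $O_P(n^{-r-1/2})$, while the last term equals $\big(n^{-1}\sum_{t=1}^n(\mathbf{1}\{t>k^\ast\}-p_n)^2\big)\,n^{-2r}\,(\mu\otimes\mu)$, a deterministic rank-one operator whose norm is of exact order $n^{-2r}$. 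Hence $\|\widehat{\mathcal C}^{\,Y}_0-\widehat{\mathcal C}^{\,X}_0\|_{HS}=O_P(n^{-2r})+O_P(n^{-r-1/2})$, which is $o_P(n^{-1/2})$ precisely when $r>1/4$, i.e.\ of strictly smaller order than $\|\widehat{\mathcal C}^{\,X}_0-\mathcal C_0\|_{HS}=O_P(n^{-1/2})$.

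First I would propagate this perturbation through the algorithm. Weyl's inequality gives $|\widehat\lambda^{\,Y}_j-\widehat\lambda^{\,X}_j|\le\|\widehat{\mathcal C}^{\,Y}_0-\widehat{\mathcal C}^{\,X}_0\|$, and a Davis--Kahan-type bound, with the denominator controlled by the spectral gaps $\alpha_j$ of Assumption~\ref{ass2}(ii), gives $\|\widehat v^{\,Y}_j-\widehat v^{\,X}_j\|=O_P\big(\alpha_j^{-1}\,\|\widehat{\mathcal C}^{\,Y}_0-\widehat{\mathcal C}^{\,X}_0\|\big)$, uniformly in $j\le m$ after the usual sign normalisation. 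Combined with the $O(n^{-r})$ direct contribution of $d_t$ to the projections $\langle Y_s,\widehat v^{\,Y}_j\rangle$, the estimated score vectors satisfy $\widehat\xi^{\,Y}_t(m)=\widehat\xi^{\,X}_t(m)+\delta_t+\rho_t$, where $\delta_t$ is a deterministic mean-zero vector of order $n^{-r}$ and $\max_t\|\rho_t\|=o_P(n^{-1/2})$; because $\sum_t\delta_t=0$, the sample autocovariance matrices of the scores at lags $0,\dots,p$ change only by $O_P(n^{-2r})+o_P(n^{-1/2})$, so the Yule--Walker estimators $\widehat A_j(m)$ and the residual covariance, the empirical distribution of the centered residuals $\widetilde e_t(m)$, and the centered functional remainders $\widehat U_{t,m}-\overline U_m$ computed from $Y$ differ from their $X$-based counterparts by amounts that, under $r>1/4$, are dominated by the errors already appearing in the proof of Theorem~\ref{th.boot}; in particular Assumption~\ref{ass1}(iii) and Assumption~\ref{ass2} remain in force for the bootstrap quantities. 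One further checks that only the $O(p)$ residuals with index in $\{k^\ast+1,\dots,k^\ast+p\}$ are contaminated, each at level $O(n^{-r})$, and that since the expected number of such draws in a bootstrap sample of size $n$ is $O(p)$ and $p\,n^{-r-1/2}\to 0$ under Assumption~\ref{ass2}, their total contribution to $Z^\ast_{n,Y}$ is $o_P(1)$.

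With these estimates, the proof of Theorem~\ref{th.boot} can be repeated with all $X$-based estimates replaced by the $Y$-based ones, giving $(Z^\ast_{n,Y}(t))_{t\in[0,1]}\Rightarrow W$ in probability, with $W$ the same Brownian motion as in Theorem~\ref{th.boot} and covariance operator $C_W$ of \eqref{eq.CovOp}, since $\mu$ enters only through lower-order terms. Finally, since
\[
\frac{1}{\sqrt n}\Big\|\sum_{i=1}^k(Y_i^\ast-\bar Y^\ast)\Big\|=\Big\|Z^\ast_{n,Y}(k/n)-\tfrac kn\,Z^\ast_{n,Y}(1)\Big\|,
\]
\eqref{eq.WConYBoot} follows from the continuous mapping theorem applied to the functional $z\mapsto\sup_{t\in[0,1]}\|z(t)-t\,z(1)\|$, which is continuous on $C([0,1],\mathcal H)$, together with the elementary fact that $\max_{1\le k<n}\|z(k/n)-\tfrac kn z(1)\|\to\sup_{t\in[0,1]}\|z(t)-t\,z(1)\|$ for every continuous $z$.

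The hard part will be the propagation step: one must keep the perturbation bounds on the estimated eigenvalues, eigenfunctions, and scores uniform over $j\le m$ as $m\to\infty$, and verify — rather than merely assert — that the extra $O_P(n^{-2r})$ and $O_P(n^{-r-1/2})$ contributions are genuinely absorbed by the delicate rate conditions of Assumption~\ref{ass2}, in particular parts (ii) and (iv), once these errors are carried through the $mp\times mp$ Yule--Walker normal equations and the subsequent inversion. By contrast, the identification of the limiting covariance as $C_W$ and the continuous mapping step are routine given Theorem~\ref{th.boot}.
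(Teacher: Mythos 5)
Your strategy coincides with the paper's. The paper's Lemma~\ref{lem:covalt} is exactly your opening computation, stated as $\E\|\hat{C}_{0,Y}-\hat{C}_{0,X}\|_{HS}^2=O(n^{-\min\{4r,1+2r\}})$, i.e.\ $\|\hat{C}_{0,Y}-\hat{C}_{0,X}\|_{HS}=O_P(n^{-2r})+O_P(n^{-r-1/2})$, and the rest of the paper's argument is precisely your ``propagation'' step, carried out concretely by re-verifying Lemmas 6.3 and 6.5--6.8 of \cite{PAP} for the $Y$-based estimates; the threshold $r>1/4$ enters in both arguments for the same reason, namely that $n^{-2r}=o(n^{-1/2})$ so that $\E\|\hat{C}_{0,Y}-C_0\|^2$ remains $O(n^{-1})$. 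Two remarks on details. First, the paper controls the score perturbation through the average $\frac{1}{n-p}\sum_t\|\hat{\xi}_{t,Y}-\xi_t\|^2=O_P(\max\{n^{-2r},\,n^{-1}\sum_{j\le m}\alpha_j^{-2}\})$ rather than your pointwise claim $\max_t\|\rho_t\|=o_P(n^{-1/2})$; the average is what the \cite{PAP} lemmas actually require and is the easier bound to prove uniformly in $m$.

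Second, one concrete misstep: it is not true that only the $O(p)$ residuals with index in $\{k^\ast+1,\dots,k^\ast+p\}$ are contaminated. For every $t>k^\ast+p$ the fitted residual picks up $n^{-r}\big(I-\sum_{j\le p}\widehat{A}_{j}(m)\big)(\langle\mu,\widehat{v}_j\rangle)_{j\le m}$, which is generically of order $n^{-r}$, and after subtracting the residual mean \emph{all} $n-p$ centered residuals carry an $O(n^{-r})$ shift (of one sign for $t\le k^\ast$ and the opposite sign for $t>k^\ast$). Your counting argument ($O(p)$ contaminated draws, each of size $n^{-r}$) therefore rests on a false premise, and a worst-case bound over all $n$ draws would give $n\cdot n^{-r}/\sqrt{n}=n^{1/2-r}\not\to0$ for $r<1/2$. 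The step is repairable, but by a different mechanism: since the bootstrap innovations are drawn i.i.d.\ from the \emph{centered} residuals, the contamination has mean zero under $P^\ast$ and adds only $O(n^{-2r}+n^{-r-1/2})$ to the empirical innovation covariance, so its contribution to $Z^\ast_{n,Y}$ is $O_P(n^{-r})=o_P(1)$ uniformly in $t$ and the limiting covariance $C_W$ is unaffected. This is, in effect, what the paper's verification of part (iv) of Lemma 6.5 of \cite{PAP} delivers via the averaged score bound together with $p\,n^{-2r}\to0$. Apart from this, your sketch (covariance perturbation, Weyl/Davis--Kahan propagation, identification of $C_W$, continuous mapping) matches the paper's proof.
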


By comparing (\ref{eq.WConYBoot}) with (\ref{eq.WConY}) and (\ref{eq.limit-Tn}) one sees that, even under the  sequence of local alternatives  (\ref{eq.LocAlt}), the  FSB procedure  manages to consistently estimate the distribution that the test statistic $T_n$ would have under  ${\rm  H}_0$.  This immediately  implies    consistency  of  the  change-point test $T_n$ based on the bootstrap critical values  $ C^\ast_{1-\alpha}$:

\begin{cor} Under the Assumption of Theorem \ref{th.boot}  and for a functional time series $(Y_n)_{n\in\mathbb{N}}$ satisfying \eqref{eq.LocAlt} with $r\in(\frac{1}{4},\frac{1}{2})$ , it holds true that, in probability, 
\[ \lim_{n\rightarrow\infty}P(T_n\geq C^\ast_{1-\alpha}) =1. \]
\end{cor}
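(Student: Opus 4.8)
\noindent The plan is to combine two facts: that the random bootstrap critical value $C^\ast_{1-\alpha}$ stays bounded in probability, and that the observed statistic $T_n$ diverges to infinity. More precisely, I would establish (a) $C^\ast_{1-\alpha}\xrightarrow{P}C_{1-\alpha}$, where $C_{1-\alpha}$ is the (finite) $(1-\alpha)$-quantile of $\sup_{t\in[0,1]}\|W(t)-tW(1)\|$, and (b) $T_n\xrightarrow{P}\infty$. Granting (a) and (b), the conclusion follows by a routine bound: given $\varepsilon>0$ choose $M<\infty$ with $\limsup_n P(C^\ast_{1-\alpha}>M)<\varepsilon$; since $\{T_n\ge M\}\cap\{C^\ast_{1-\alpha}\le M\}\subseteq\{T_n\ge C^\ast_{1-\alpha}\}$ and $P(T_n\ge M)\to 1$, we get $\liminf_n P(T_n\ge C^\ast_{1-\alpha})\ge 1-\varepsilon$, and letting $\varepsilon\downarrow 0$ yields $\lim_n P(T_n\ge C^\ast_{1-\alpha})=1$.

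For (a): because $r\in(\tfrac14,\tfrac12)$ we have in particular $r>\tfrac14$, so Theorem~\ref{thm:altloc} applies to the observed series $Y_1,\dots,Y_n$ and gives the in-probability weak convergence \eqref{eq.WConYBoot}, namely $T^\ast_{n,m}\Rightarrow\sup_{t\in[0,1]}\|W(t)-tW(1)\|$ in probability. By Assumption~\ref{ass1}(ii) the covariance operator $C_W$ is injective, so the limiting Gaussian bridge $W(t)-tW(1)$ is non-degenerate and the law of its supremum norm has a continuous distribution function that is strictly increasing at $C_{1-\alpha}$. The standard argument converting weak convergence of the (here random) distribution functions to a fixed continuous limit into convergence of the associated quantiles then gives $C^\ast_{1-\alpha}\xrightarrow{P}C_{1-\alpha}$; the ``in probability'' qualifier is inherited throughout.

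For (b): set $t^\ast=k^\ast/n$ and $\bar X_n=n^{-1}\sum_{i=1}^n X_i$. A direct computation from \eqref{eq.LocAlt} gives, for every $k$,
\[
\sum_{i=1}^k\big(Y_i-\bar Y\big)=\sum_{i=1}^k\big(X_i-\bar X_n\big)+n^{-r}\mu\, h_n(k),\qquad h_n(k)=(k-k^\ast)^+-\tfrac{k}{n}(n-k^\ast),
\]
so that $h_n(k^\ast)=-k^\ast(n-k^\ast)/n$ and hence $|h_n(k^\ast)|\ge c\,n$ for all large $n$ with $c:=t^\ast(1-t^\ast)/2>0$. Evaluating $T_n$ at the true break point and using the reverse triangle inequality,
\[
T_n\ \ge\ \frac{1}{\sqrt n}\Big\|\sum_{i=1}^{k^\ast}\big(Y_i-\bar Y\big)\Big\|\ \ge\ c\,\|\mu\|\,n^{1/2-r}\ -\ \max_{1\le k<n}\frac{1}{\sqrt n}\Big\|\sum_{i=1}^{k}\big(X_i-\bar X_n\big)\Big\|.
\]
The subtracted maximum is precisely the CUSUM statistic of the unobserved series $X_1,\dots,X_n$, which by \eqref{eq.WConX} and the continuous mapping theorem converges in distribution to $\sup_{t\in[0,1]}\|W(t)-tW(1)\|$ and is therefore $O_P(1)$; meanwhile $c\,\|\mu\|\,n^{1/2-r}\to\infty$ since $r<\tfrac12$. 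Thus $T_n\xrightarrow{P}\infty$, which completes the argument.

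The only genuinely delicate step is (a): passing from the in-probability weak convergence \eqref{eq.WConYBoot} to the in-probability boundedness (indeed convergence) of the data-dependent threshold $C^\ast_{1-\alpha}$. This is the familiar passage from convergence of random distribution functions to convergence of their quantiles, and it relies on the non-degeneracy of the limiting Gaussian bridge so that its distribution function is strictly increasing at $C_{1-\alpha}$. Everything else is a one-line deterministic identity for the CUSUM together with the already-established invariance principle \eqref{eq.WConX}.
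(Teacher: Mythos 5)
Your proof is correct and follows essentially the same route the paper intends: the paper gives no explicit proof of the corollary, asserting only that it follows ``immediately'' from the bootstrap quantile being $O_P(1)$ (via Theorem~\ref{thm:altloc}) together with the divergence of $T_n$ for $r<1/2$, and your parts (a) and (b) are exactly these two ingredients with the details filled in. Your explicit drift computation in (b), giving $T_n\geq c\|\mu\|n^{1/2-r}-O_P(1)\to\infty$ for $r<1/2$, also confirms that the paper's preceding remark (which asserts divergence for $r>1/2$ and convergence to the null limit for $r<1/2$) has the two inequalities inadvertently swapped.
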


\section{Numerical Results}\label{sec:num}

In this section  we compare the size and the power of  the CUSUM-test with critical values obtained using  different methods: The  FSB procedure  introduced in this paper\footnote{The R-Code of the FSB  is available under \texttt{https://cloud.ovgu.de/s/BHPi8b3e99RDcY4}.}, the non-overlapping block bootstrap considered in  \cite{STW16} and a testing procedure based on  estimation of parameters involved in  the limit distribution which has been  proposed by \cite{AUE}.

Let us first note that while the computational complexity of the CUSUM statistic grows linearly in the sample size $n$, the computation time for the bootstrap methods nevertheless can be quite long, because these methods rely on Monte Carlo evaluation.  For one sample of size of $n=200$, $50$ grid points to calculate the integrals and $1000$ bootstrap iterations, we measured a computation time of 31.7s for the functional sieve bootstrap and 33.2s for the non-overlapping block bootstrap (on a standard laptop). The method by \cite{AUE} avoids Monte Carlo evaluation and is thus much faster: we measured 3.3s.

For all three methods, tuning parameters have to be chosen. For the  FSB, we choose the number of principal components $m$ and the order $p$ of the bootstrap VAR-model as outlined in \cite{paparoditis2023bootstrap}: the minimum number of principal is chosen to explain at leat 85\% of the variance, and for the selection of the order $p$,  a corrected Akaike information  criterion is used. Some additional simulations to illustrate the effect of the autoregressive order on the FSB can be found in the Appendix. For the non-overlapping block bootstrap (NBB), we choose the block length by adapting a method by \cite{RS17}, see also \cite{wegner2021block}. This method is also applied for the procedure as proposed by \cite{AUE}, who also kindly provided their R-codes.

Different stochastic models are used  to create the functional observations. In particular,  a functional first order autoregressive (FAR(1))  model with Brownian bridge innovations as in \cite{STW16}, a FAR(1) process with squared Brownian bridges as innovations, and  a functional moving average (FMA(1)) process of order 1 as in \cite{aue2017estimating} have been used. In all scenarios, the results are based on the sample sizes  $n=50,100,200$ and  the rejection frequencies are based on 2000 simulation runs and 1000 bootstrap samples.

First, we generate Gaussian FAR(1)  processes by
\begin{equation*}
X_{n+1}(t)=C\int_{0}^{1}stX_n(s)ds+\epsilon_{n+1}(t),
\end{equation*}
where $(\epsilon_n)_{n\in \mathbb{N}}$ is a independent identically distributed (i.i.d.) series of Brownian bridges. The strength of the dependence is regulated by the parameter $C$ which we choose either as 0.245, 0.49 or as -0.49. For this model, the FSB procedure  keeps the size best, see Tables \ref{tabSize50} to \ref{tabSize200}, while the NBB and the asymptotic method lead to oversized tests for positive $C$ and conservative tests for negative $C$.

Next, we study the behaviour for non-Gaussian FAR(1)-processes generated by 
\begin{equation*}
X_{n+1}(t)=C\int_{0}^{1}stX_n(s)ds+\epsilon_{n+1}^2(t)+\eta^2(t),
\end{equation*}
where $(\epsilon_n)_{n\in\mathbb{N}}$ and $(\eta_n)_{n\in\mathbb{N}}$ are independent i.i.d. sequences of Brownian bridges and $C=0.490$. For this time series, again the  FSB method holds the size, while the rejection frequency especially for the asymptotic method is to high for $n=100$ or $n=200$. 
 
Finally, we simulate a  FMA(1) process of order 1 which is constructed like in \cite{aue2017estimating}.  First, for every simulation run a 21$\times$21 matrix $A$ with independent, centered Gaussian entries and $\Var[A_{ij}]=(ij)^{-\gamma}$ is generated and standardized to have spectral norm 1. We choose either $\gamma=1$ (fast decay of autocovariances) or $\gamma=0.6$ (slow deay of autocovariances). Next,  the vector-valued process 
\begin{equation*}
Z_n=\epsilon_n+A\epsilon_{n-1}
\end{equation*}
is simulated for an i.i.d. sequence $(\epsilon_n)_{n\in\mathbb{N}}$ of centered Gaussian random vectors with $\Var(\epsilon_{n,i})=i^{-1}$. We then created a  FMA(1) process $(X_n)_{n\in\mathbb{N}}$ by using the entries of $Z_n$ as Fourier coefficients of $X_n$. In this case, the NBB exceeds the theoretical size most often, while the FSB and the asymptotic method keep the size most of the time. The FSB is oversized for the fast decay of autocovariances ($\gamma=1$) and $n=100$, while the asymptotic method is oversized for the slow decay of autocovariances ($\gamma=0.6$) and $n=200$.

In summary, we see that the FSB is less often oversized under the null-hypothesis in the scenarios we have investigated, especially compared to the non-overlapping bootstrap.

\begin{table}
\caption{Empirical rejection frequencies under $H_0$  for theoretical sizes $\alpha$, sample size $n=50$ (FSB = functional sieve bootstrap, NBB = non-overlapping block bootstrap, Asymptotic = method by \cite{AUE}).}\label{tabSize50}
\vspace*{0.2cm}
\begin{tabular}{|l|l|ccc|}
\hline 
\rule[-1ex]{0pt}{2.5ex} model & method  & $\alpha=10\%$ & $\alpha=5\%$ & $\alpha=1\%$ \\ 
\hline 
FAR(1)  & FSB &  0.092 & 0.035 &  0.001 \\ 
Gaussian & NBB &0.106  & 0.049 &  0.008 \\ 
$C=0.245$ &  Asymptotic & 0.106 & 0.039 & 0.006 \\ 
\hline 
FAR(1)  &  FSB & 0.085 & 0.028& 0.002\\ 
Gaussian & NBB & 0.127 & 0.055 & 0.010\\ 
$C=0.49$ & Asymptotic & 0.114 & 0.051 & 0.004 \\ 
\hline
FAR(1) &  FSB & 0.102 & 0.036 & 0.004\\ 
Gaussian  & NBB & 0.075 & 0.030 & 0.004\\ 
$C=-0.49$  & Asymptotic & 0.067 & 0.019 & 0.001 \\ 
\hline
FAR(1) &  FSB & 0.077 & 0.025 & 0.002\\ 
non-Gaussian & NBB & 0.091 & 0.046 & 0.003\\ 
$C=0.49$ & Asymptotic & 0.094 & 0.037 & 0.003 \\ 
\hline
FMA(1) &  FSB & 0.098 & 0.028 & 0.000 \\ 
fast decay &  NBB & 0.108 & 0.052 & 0.005 \\ 
of autocov. & Asymptotic & 0.098 & 0.043 & 0.002 \\  
\hline
FMA(1) & FSB & 0.082 & 0.030 & 0.001\\ 
slow decay &  NBB & 0.101 & 0.049 & 0.012\\ 
of autocov. & Asymptotic &  0.088 &0.040  & 0.006 \\ 
\hline 
\end{tabular} 
\end{table}

\begin{table}
\caption{Empirical rejection frequencies under $H_0$ for theoretical sizes $\alpha$, sample size $n=100$ (FSB = functional sieve bootstrap, NBB = non-overlapping block bootstrap, Asymptotic = method by \cite{AUE}).}\label{tabSize100}
\vspace*{0.2cm}
\begin{tabular}{|l|l|ccc|}
\hline 
\rule[-1ex]{0pt}{2.5ex} model & method  & $\alpha=10\%$ & $\alpha=5\%$ & $\alpha=1\%$ \\ 
\hline 
FAR(1)  & FSB & 0.088 & 0.041 & 0.005 \\ 
Gaussian  & NBB & 0.113 & 0.058 & 0.008 \\ 
$C=0.245$ &  Asymptotic & 0.112 & 0.045 & 0.005 \\ 
\hline 
FAR(1)  &   FSB & 0.092 & 0.039 & 0.006 \\ 
Gaussian  & NBB & 0.132 & 0.062 & 0.015 \\ 
$C=0.49$ & Asymptotic & 0.131 & 0.055 &  0.009 \\ 
\hline
FAR(1)  & FSB & 0.078 & 0.039 & 0.005 \\ 
Gaussian  &  NBB & 0.064 & 0.028 & 0.004 \\ 
$C=-0.49$  & Asymptotic & 0.058 & 0.023 &  0.002\\ 
\hline
FAR(1)  &  FSB & 0.089 & 0.039  &0.003 \\ 
non-Gaussian  & NBB & 0.114 & 0.055 & 0.008 \\ 
$C=0.49$  & Asymptotic & 0.130 & 0.064 & 0.006 \\ 
\hline
FMA(1) &  FSB &0.117 & 0.055 &  0.004\\ 
fast decay &  NBB & 0.111 & 0.048 & 0.010\\ 
of autocov. & Asymptotic & 0.082 & 0.042  & 0.010 \\  
\hline
FMA(1) & FSB & 0.090 & 0.035 & 0.001\\ 
slow decay &  NBB & 0.110 & 0.055 & 0.008 \\ 
of autocov. & Asymptotic &  0.099 & 0.050 & 0.007 \\ 
\hline 
\end{tabular} 
\end{table}

\begin{table}
\caption{Empirical rejection frequencies under $H_0$ for theoretical sizes $\alpha$, sample size $n=200$ (FSB = functional sieve bootstrap, NBB = non-overlapping block bootstrap, Asymptotic = method by \cite{AUE}).}\label{tabSize200}
\vspace*{0.2cm}
\begin{tabular}{|l|l|ccc|}
\hline 
\rule[-1ex]{0pt}{2.5ex} model & method  & $\alpha=10\%$ & $\alpha=5\%$ & $\alpha=1\%$ \\ 
\hline
FAR(1)  &  FSB & 0.107 & 0.051 & 0.009 \\ 
Gaussian  & NBB &  0.121 & 0.063& 0.008\\ 
$C=0.245$ &  Asymptotic & 0.113 & 0.062 & 0.010 \\ 
\hline
FAR(1)  &  FSB & 0.094 & 0.039 & 0.007 \\ 
Gaussian  &  NBB & 0.122 & 0.058 &0.012 \\ 
$C=0.49$ & Asymptotic & 0.122 & 0.058 & 0.011\\ 
\hline 
FAR(1)  &  FSB & 0.105 & 0.052 & 0.011 \\ 
Gaussian &   NBB & 0.083 & 0.034 & 0.006 \\ 
$C=-0.49$  &  Asymptotic & 0.081 & 0.037 & 0.005\\ 
\hline
FAR(1)  & FSB & 0.096 & 0.048  &  0.009 \\ 
non-Gaussian & NBB& 0.120 & 0.056& 0.010 \\ 
$C=0.49$ &  Asymptotic & 0.123 & 0.061& 0.010 \\ 
\hline
FMA(1) &  FSB &  0.095& 0.041 &0.006 \\ 
fast decay &  NBB & 0.098 & 0.047 & 0.012\\ 
of autocov. &  Asymptotic &  0.093 & 0.046 & 0.012 \\  
\hline
FMA(1) &  FSB & 0.101 & 0.051 & 0.009\\ 
slow decay &  NBB & 0.113 & 0.052 & 0.013\\ 
of autocov. &  Asymptotic & 0.110  & 0.059 & 0.013 \\  
\hline 
\end{tabular} 
\end{table}

To simulate the behavior under the alternative, we generate $n=200$ observations by
\begin{equation*}
Y_n(t)=\begin{cases}X_n(t) \ \  &\text{for }n\leq 100\\ X_n(t)+\mu \ \ &\text{for }n\geq 101\end{cases},
\end{equation*}
 where $\mu$ is chosen to be constant (not dependent on $t$)  with values $0.1$, $0.15$ or $0.3$ depending on the dependence structure. We adjusted the critical values such that the size under the null-hypothesis would be exactly the nominal level, so that we get the size-corrected power under the alternative. While the difference in the power of the three methods is not very pronounced, the other two methods lead to slightly higher power, see Figure \ref{fig1} for the AR(1)-process with $C=0.49$ and Figure \ref{fig1b} in the appendix for further results. As the test statistic used it the same and the only difference is the method to obtain critical values, it is not surprising that  the power of the tests  behaves similar. 

\begin{figure}
 \caption{Size-corrected empirical power for a FAR(1)-process with $C=0.49$ and Gaussian innovations, jump of size $\mu=0.15$ after $100$ of the $n=200$ observations (FSB = functional sieve bootstrap, NBB = non-overlapping block bootstrap, Asymptotic = method by \cite{AUE}).}\label{fig1}
\includegraphics[width=0.8\textwidth]{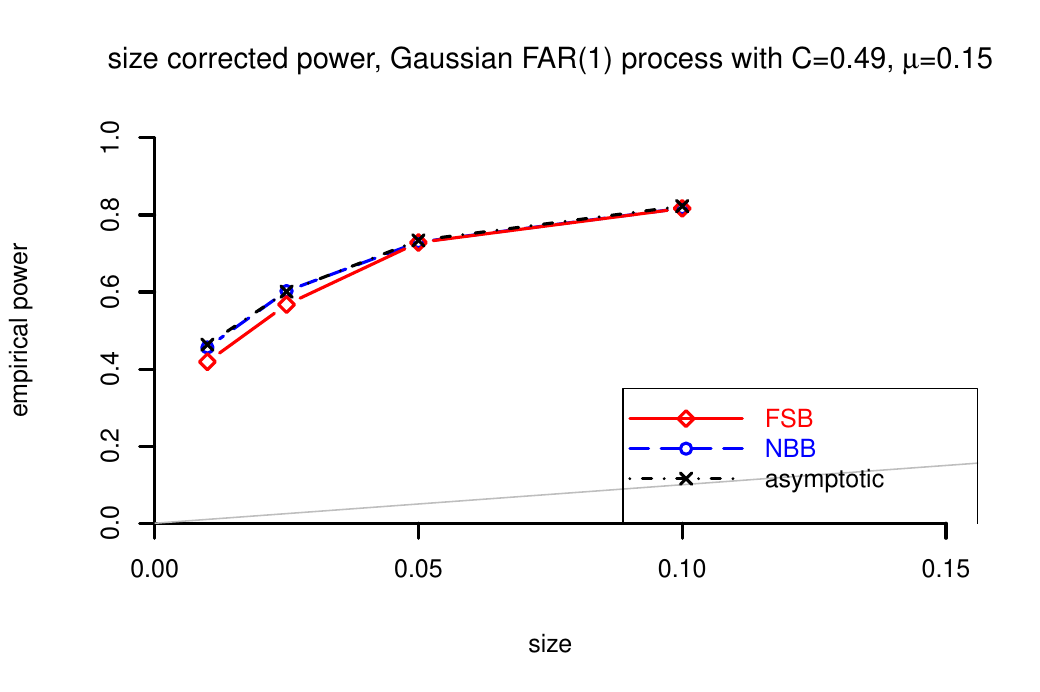} 
 \end{figure}

\bibliography{paparoditisetalarxiv4.bbl} 

\clearpage

\begin{Large}
\noindent {\bf Supplementary Material} 
\end{Large}

\appendix
 
\section{Proofs}
We recall the following notation which will be used in the sequel:
\begin{itemize}
\item $\hat{v}_1,...,\hat{v}_m$  are the estimated eigenfunctions corresponding to the estimated eigenvalues $\hat{\lambda}_1 > \hat{\lambda}_2 > ... > \hat{\lambda}_m$ of the sample covariance operator 
\item $\hat{\xi}_t(m) = (\hat{\xi}_{j,t}, j=1,...,m )^\top $, where  $\hat{\xi}_{j,t}=\langle X_t, \hat{v}_j \rangle$, is the $m$-dimensional vector of estimated scores.
\item $\hat{X}_{t,m} = \sum_{j=1}^m \hat{\xi}_{j,t}\hat{v}_j$ and  $\hat{U}_{t,m} = X_t - \hat{X}_{t,m}$, $t=1,...,n$.
\item $U^\ast_{t,m}$ is drawn with replacement  from the set  \\$\{(\hat{U}_{t,m}-\frac{1}{n}\sum_{s=1}^n \hat{U}_{s,m}), t=1,...,n
\}$
\item$\hat{A}_{j,p}(m)$, $j=1,...,p$ are estimates of AR-matrices from $p$-th order  VAR-process fitted to the vector time series $\hat{\xi}_t$, $t=1,2 \ldots, n$.
\item residuals $\hat{\epsilon}_{t,p}(m) = \hat{\xi}_t(m) - \sum_{j=1}^p \hat{A}_{j,p}(m)\hat{\xi}_{t-j}(m)$, $t= p+1,p+2,...,n$
\item   $\xi_t^\ast =(\xi_{j,t}^\ast)_{j=1,...,m}$, $t=1,2, \ldots,n$, is a  $m$-dimensional, FSB generated  pseudo time series, where  $\xi_t^\ast = \sum_{j=1}^p \hat{A}_{j,p}(m) \xi_{t-j}^\ast + e_t^\ast$, $t=1,...,n$. $e^\ast_t$ is drawn iid from the centered residuals: $e^\ast_t=(\hat{e}_{I_t,p}- \frac{1}{n-p}\sum_{s=p+1}^n \hat{e}_{s,p})$. $I_1,....,I_n$ are the same independent and uniformly on $\{p+1,...,n\}$ distributed random variables as used for the construction of $e^\star_t$.
\item  $X_t^\ast = \sum_{j=1}^m \xi_{j,t}^\ast \hat{v}_j + U_{t,m}^\ast$, $t=1,2, \ldots,n$ is the FSB generated  functional time series.  
\end{itemize}

For simplicity  and if it is clear from the context, we avoid in the following the notation $ y(m)$ for a $m$-dimensional vector and simply  write $y$. \\

\noindent {\bf Proof of Theorem~\ref{th.boot}:} We will make use of the following two theorems due to  \cite{SER}, which we give here for ease of reference. 

\begin{thm}[Theorem A of \cite{SER}] \label{ThmA}
Let $(X_i)_{i \in \mathbb{N}}$ be a series of random variables, $\nu \geq 2$. Suppose it exists a function $g(F_{a,n})$ (depending on the joint distribution function $F_{a,n}$ of $X_{a+1},...,X_{a+n}$) satisfying
\begin{equation}
 g(F_{a,k})+ g(F_{a+k,l}) \leq g(F_{a,k+l}) \;\;\; \forall a\geq a_0, \; 1 \leq k \leq k+l \label{ThmAineq}  
 \end{equation}
such that $\E[\| S_{a,n}\| ^\nu ] \leq g^{\frac{1}{2}\nu}(F_{a,n})$, then
\[ \E (M_{a,n}^\nu) \leq \log_2(2n)^\nu g^{\frac{1}{2}\nu}(F_{a,n})  \]
where $S_{a,n}= \sum_{i=a+1}^{a+n}X_i$ and $M_{a,n}=\max\limits_{1\leq k \leq n} \| S_{a,k}\|$.
\end{thm}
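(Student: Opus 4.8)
\medskip
\noindent\textit{Proof sketch (proposal).}
The plan is a classical dyadic (Rademacher--Menshov type) decomposition of the partial sums, combined with the superadditivity hypothesis \eqref{ThmAineq}; note that no independence or structural property of the $X_i$ enters beyond the postulated moment bound, so the argument will go through verbatim for Hilbert-space--valued (indeed Banach-space--valued) summands. First I would fix $a$ and $n$ and observe that for every $k$ with $1\le k\le n$ the initial segment $\{a+1,\dots,a+k\}$ is a disjoint union of \emph{consecutive dyadic blocks} (the intervals $\{a+(\ell-1)2^{j}+1,\dots,a+\ell 2^{j}\}$), at most one of each dyadic length $2^{0},2^{1},2^{2},\dots$ -- this is just the binary expansion of $k$ -- and that, because $k\le n$, none of these blocks extends beyond $a+n$. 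Grouping the blocks by \emph{level} (the one of length $2^{j}$ being at level $j$), letting $Y_j=\max_\ell\|S_{B_{j,\ell}}\|$ denote the largest block-sum norm at level $j$, the triangle inequality yields $\|S_{a,k}\|\le\sum_j Y_j$ for every $k$, hence
\[
M_{a,n}\le\sum_j Y_j .
\]
The crucial combinatorial point is that the number of levels that can occur for $k\le n$ is at most $\log_2(2n)$: if $2^{m-1}<n<2^{m}$ only levels $0,\dots,m-1$ appear ($m=\lceil\log_2 n\rceil\le\log_2(2n)$), while if $n=2^{m}$ the levels are $0,\dots,m$, which is exactly $\log_2(2n)$ of them.

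Next I would apply Minkowski's inequality (valid since $\nu\ge1$) to obtain $\bigl(\E M_{a,n}^\nu\bigr)^{1/\nu}\le\sum_j\bigl(\E Y_j^\nu\bigr)^{1/\nu}$, and then bound each level separately. Estimating the maximum crudely by a sum of $\nu$-th powers and invoking the hypothesis $\E\|S\|^\nu\le g^{\nu/2}(F)$ gives
\[
\E Y_j^\nu\le\sum_\ell\E\|S_{B_{j,\ell}}\|^\nu\le\sum_\ell g^{\nu/2}\bigl(F_{B_{j,\ell}}\bigr).
\]
Since $\nu\ge2$, the map $x\mapsto x^{\nu/2}$ is superadditive on $[0,\infty)$ -- here one uses $g\ge0$, which is forced by $0\le\E\|S\|^\nu\le g^{\nu/2}(F)$ -- so $\sum_\ell g^{\nu/2}(F_{B_{j,\ell}})\le\bigl(\sum_\ell g(F_{B_{j,\ell}})\bigr)^{\nu/2}$; and since the level-$j$ blocks are consecutive and partition a sub-interval of $\{a+1,\dots,a+n\}$, iterating \eqref{ThmAineq} (together with monotonicity of $g$, itself a consequence of \eqref{ThmAineq} and $g\ge0$) yields $\sum_\ell g(F_{B_{j,\ell}})\le g(F_{a,n})$. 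Hence $\bigl(\E Y_j^\nu\bigr)^{1/\nu}\le g^{1/2}(F_{a,n})$ for every level $j$, and summing over the at most $\log_2(2n)$ active levels gives $\bigl(\E M_{a,n}^\nu\bigr)^{1/\nu}\le\log_2(2n)\,g^{1/2}(F_{a,n})$, which is the claim after raising both sides to the power $\nu$.

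The argument only uses the triangle inequality, Minkowski, one application of the moment hypothesis per block, and \eqref{ThmAineq}, so I expect the two genuinely load-bearing steps to be the ones requiring care. The first is the dyadic bookkeeping: verifying that the binary-expansion decomposition never produces a block crossing $a+n$, and counting active levels precisely enough to land on the sharp constant $\log_2(2n)$ rather than a cruder $\lceil\log_2 n\rceil+1$ -- this is exactly where the distinction between $n$ a power of two and the general case enters. The second is the passage $\sum_\ell g^{\nu/2}\le\bigl(\sum_\ell g\bigr)^{\nu/2}$, which is precisely the step at which the hypothesis $\nu\ge2$ (and the implicit $g\ge0$) is used; for $1\le\nu<2$ this step, and hence the inequality in the stated form, would break down.
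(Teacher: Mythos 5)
The paper does not actually prove this statement: Theorem~\ref{ThmA} is quoted from \cite{SER}, and the authors only remark that the real-valued proof carries over to normed spaces without change. Your sketch is therefore a self-contained reconstruction, and it is correct, but it organizes the argument differently from Serfling's original. Serfling argues by induction on $n$, bisecting at a midpoint $h\approx n/2$, using $M_{a,n}\le\max\{M_{a,h},M_{a+h,n-h}\}+\|S_{a,h}\|$ together with the induction hypothesis on the two halves; the factor $\log_2(2n)$ arises as the depth of that recursion, and the analytic core is the step $\bigl(g^{\nu/2}(F_{a,h})+g^{\nu/2}(F_{a+h,n-h})\bigr)^{1/\nu}\le g^{1/2}(F_{a,n})$, which uses $\nu\ge 2$ and \eqref{ThmAineq} exactly as your level-wise bound does. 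You instead carry out the dyadic decomposition in one shot: binary expansion of $k$ into aligned blocks, Minkowski across the at most $\log_2(2n)$ levels, and within each level the chain $\sum_\ell g^{\nu/2}(F_{B_{j,\ell}})\le\bigl(\sum_\ell g(F_{B_{j,\ell}})\bigr)^{\nu/2}\le g^{\nu/2}(F_{a,n})$. The two routes are equivalent in substance and yield the same constant; yours has the merit of making explicit where $g\ge 0$ and the monotonicity of $g$ (both consequences of \eqref{ThmAineq} and the moment hypothesis) enter, and of confirming directly the authors' claim that nothing beyond the triangle and Minkowski inequalities is used, so the bound indeed holds for Hilbert- or Banach-space-valued summands. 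Your bookkeeping of the levels (the $n=2^m$ versus $2^{m-1}<n<2^m$ dichotomy) and the alignment of the binary-expansion blocks are both handled correctly.
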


\begin{thm}[Theorem B of \cite{SER}] \label{ThmB}
Let $\nu >2$ and use the same notation as in Theorem~\ref{ThmA}.  Suppose that  $ \E |S_{a,n}|^\nu \leq g^{\nu/2}(n)$, for all $ a \geq a_0$ and all $n\geq 1$, where $ g(n)$ is  nondecreasing, $ 2g(n) \leq g(2n)$, and $g(n)/g(n+1)\rightarrow 1$ as $ n \rightarrow \infty$. Then there exists a finite constant $K$ (which may depend on $\nu$, $g$ and the joint distributions of the $ X_t's$) such that 
\[ \E(M_{a,n}^\nu ) \leq K g^{\nu/2}(n).\] 
\end{thm}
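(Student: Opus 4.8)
The plan is to prove Theorem~\ref{ThmB} by a dyadic chaining argument for the maximal partial sum, refining the naive bisection that is responsible for the logarithmic factor in Theorem~\ref{ThmA}. The essential gain is that, because $\nu>2$, the contributions of the successive dyadic scales decay geometrically and hence sum to a constant multiple of $g^{\nu/2}(n)$ rather than to a $\log$-inflated multiple. Throughout I would fix $a\ge a_0$ and, for a given $n$, choose $r$ with $2^r\le n<2^{r+1}$; all constants produced will be uniform in $a$ and $n$.

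The first step is a purely deterministic, one-block-per-scale decomposition. For each $k$ with $1\le k\le n$, writing $k$ in binary shows that the summation range $\{a+1,\dots,a+k\}$ splits into at most $r+1$ consecutive, dyadically aligned blocks of pairwise distinct lengths $2^{i}$, each block starting at a multiple of its own length and contained in $\{a+1,\dots,a+n\}$. By the triangle inequality this gives
\[ \|S_{a,k}\|\le\sum_{i=0}^{r}D_i,\qquad D_i:=\max_{0\le j<\lfloor n/2^i\rfloor}\big\|S_{a+j2^i,\,2^i}\big\|, \]
and taking the maximum over $k$ yields $M_{a,n}\le\sum_{i=0}^{r}D_i$, replacing the maximum over all partial sums by a sum of scale-wise maxima. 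Bounding $D_i^\nu\le\sum_j\|S_{a+j2^i,2^i}\|^\nu$ over the at most $n/2^i$ level-$i$ blocks and invoking the hypothesis $\E\|S_{a,m}\|^\nu\le g^{\nu/2}(m)$ gives $\|D_i\|_\nu\le(n/2^i)^{1/\nu}g^{1/2}(2^i)$. Here the structural hypotheses on $g$ enter: monotonicity gives $g(2^r)\le g(n)$, and iterating $2g(m)\le g(2m)$ gives $g(2^i)\le 2^{\,i-r}g(2^r)\le 2^{\,i-r}g(n)$. Applying Minkowski's inequality to $M_{a,n}\le\sum_i D_i$ and inserting these bounds leads to
\[ \|M_{a,n}\|_\nu\le\sum_{i=0}^{r}\Big(\frac{n}{2^i}\Big)^{1/\nu}g^{1/2}(2^i)\le n^{1/\nu}\,2^{-r/2}\,g^{1/2}(n)\sum_{i=0}^{r}2^{\,i(1/2-1/\nu)}. \]
Since $\nu>2$, the geometric sum is dominated by its top term, of order $2^{\,r(1/2-1/\nu)}$, and the surviving powers of two collapse via $n^{1/\nu}2^{-r/2}2^{\,r(1/2-1/\nu)}=(n/2^r)^{1/\nu}<2^{1/\nu}$. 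Raising to the power $\nu$ then produces
\[ \E\big(M_{a,n}^\nu\big)\le K\,g^{\nu/2}(n),\qquad K=\frac{2^{\nu/2}}{\big(2^{1/2-1/\nu}-1\big)^{\nu}}, \]
uniformly in $a\ge a_0$ and $n$, as required.

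I expect the main obstacle to be exactly the mechanism that forces the scale-wise contributions to decay geometrically, so that the sum over scales is $O(g^{1/2}(n))$ rather than $O(\log n\cdot g^{1/2}(n))$. This is where the combination $\nu>2$ together with the superadditivity $2g(m)\le g(2m)$ is indispensable: one must check that after combining the combinatorial factor $(n/2^i)^{1/\nu}$ coming from the number of blocks with the moment factor $g^{1/2}(2^i)$ coming from the size of each block, the net exponent $i(1/2-1/\nu)$ is such that the geometric series is controlled by its largest term, which is precisely of order $g^{1/2}(n)$. For $\nu=2$ this exponent vanishes, the series contributes a factor proportional to $r+1\asymp\log n$, and one recovers the logarithmic loss of Theorem~\ref{ThmA}; verifying that the strict inequality $\nu>2$ genuinely removes this loss is the crux of the argument. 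The monotonicity of $g$ alone suffices for the passage from the dyadic scales to a general $n$, since all blocks are embedded inside $\{a+1,\dots,a+n\}$; the remaining hypothesis $g(n)/g(n+1)\to1$ is only needed to guarantee the comparability $g(2^r)\asymp g(n)$ if one instead prefers to establish the bound first at $n=2^r$ and then interpolate, and the direct decomposition above sidesteps it.
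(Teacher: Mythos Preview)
The paper does not contain its own proof of this statement: Theorem~\ref{ThmB} is quoted verbatim from \cite{SER} ``for ease of reference'' and is used as a black box in the proofs of Lemma~\ref{le.lemma3} and of part III) of Lemma~\ref{le.lemma4}. There is therefore nothing in the paper to compare your argument against.

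That said, your dyadic chaining proof is correct and is in fact the standard modern route to Serfling's Theorem~B (essentially the M\'oricz--Serfling argument). The binary decomposition of $k$ yields at most one aligned block $S_{a+j2^i,2^i}$ per scale $i$, your union bound $\E D_i^\nu\le (n/2^i)\,g^{\nu/2}(2^i)$ is immediate from the hypothesis, and the combination of the doubling condition $2g(m)\le g(2m)$ with $\nu>2$ makes the Minkowski sum over scales a convergent geometric series with top term of order $g^{1/2}(n)$. Your observation that the hypothesis $g(n)/g(n+1)\to1$ is not actually needed in this version of the argument is also correct: once you decompose a general $k\le n$ directly (rather than first proving the bound at $n=2^r$ and then interpolating), monotonicity of $g$ together with the doubling condition suffices. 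The only cosmetic slip is the phrase ``each block starting at a multiple of its own length'': the alignment is relative to the offset $a$, not absolute, but this is exactly what your definition of $D_i$ encodes, so the argument is unaffected.
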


Note that these results  were  formulated for real-valued random variables by  \cite{SER}, but the proofs carry over to normed spaces without any changes.

%
Define the following fictitious processes:
\begin{itemize}
\item $\{ \tilde{\xi}_s, s \in \mathbb{Z} \}$ is a $m$-dimensional process which obeys   a  vector autoregressive representation as $\xi_s$, i.e., 
\begin{equation} \label{eq.tildexi}
\tilde{\xi}_s = \sum_{j=1}^\infty A_j(m)\tilde{\xi}_{s-j} + \varepsilon_s
\end{equation}
where the set  of   $m\times m$ coefficient matrices $\{A_j(m) , j\in\mathbb{N}\}$ is the same  as    in (\ref{eq.xi-VAR})  but  the innovations  $\varepsilon_s$  are  i.i.d. sequence with mean zero, variance $ \Sigma_e(m)$  and distribution function  $ G_e$.  That is,  
 in contrast to 
the innovations $e_t$ in (\ref{eq.xi-VAR}), the innovations $ \varepsilon_t $ in (\ref{eq.tildexi})   are i.i.d., which implies that  $\{\tilde\xi_s, s\in\mathbb{Z}\}$ is a linear, $m$-dimensional VAR($\infty)$ process. 
\item $\{\xi_s^+, s \in\mathbb{Z} \}$  is a $m$-dimensional process, where $\xi_t^+$ is generated as
\[\xi_t^+ = \sum_{j=1}^p \tilde{A}_{j,p}(m) \xi_{t-j}^+ + \epsilon_t^+ . \]
Here $\tilde{A}_{p,m}=(A_{1,p}(m),...,A_{p,p}(m))$ is as $\hat{A}_{p,m}$ but with regard to the true series $\xi_t$, $ t=1,2, \ldots, n$ and $\epsilon_t^+$ is obtained by resampling from centered residuals: $\epsilon_t^+=\tilde{\epsilon}_{I_t} -\bar{\tilde{\epsilon}}_n$  with $\tilde{\epsilon}_t = (\xi_t - \sum_{j=1}^p \tilde{A}_{j,p}(m)\xi_{t-j}$ and $ \bar{\tilde\epsilon}=(n-p)^{-1}\sum_{t=p+1}^n \tilde{\epsilon}_t$. $I_1,....,I_n$ are the same independent and uniformly on $\{p+1,...,n\}$ distributed random variables as used for the construction of $e^\star_t$.
\item $\{ \xi_s^\circ, s\in\mathbb{Z} \}$ is  a $\mathbb{R}^\mathbb{N}$ dimensional process  which satisfies the following condition: For each $m\in \mathbb{N}$, it holds true that 
the $m$-dimensional  vector process
$\{ \xi^\circ_t(m), t\in \mathbb{Z}\}$, where $ \xi^\circ_t(m)$   consists of the first $m$ components of the infinite dimensional vector $\xi^\circ_t$, coincides with the $m$-dimensional process $\{\tilde{\xi}_t,t\in\mathbb{Z}\}$ given in (\ref{eq.tildexi}).
\end{itemize}
Define next, 
\begin{itemize}
\item[(i)] $Z^+_{n,m}(t) = \frac{1}{\sqrt{n}} \sum_{s=1}^{\floor*{nt}} \sum_{l=1}^m \xi_{l,s}^+ v_l$, with $\xi_s^+ = (\xi_{1,s}^+ , \cdots ,\xi_{m,s}^+)\Transp \in \mathbb{R}^m$,
\item[(ii)]  $\tilde{Z}_{n,m}(t) = \frac{1}{\sqrt{n}} \sum_{s=1}^{\floor*{nt}} \sum_{l=1}^m \tilde{\xi}_{l,s} v_l$, with $\tilde{\xi}_s = 
(\tilde{\xi}_{1,s}, \cdots,  \tilde{\xi}_{m,s})\Transp \in \mathbb{R}^m$,
\item[(iii)]  $Z_n^\circ(t) = \frac{1}{\sqrt{n}} \sum_{s=1}^{\floor*{nt}} \sum_{l=1}^\infty \xi_{l,s}^\circ v_l$, with $\xi_s^\circ = (\xi_{1,s}^\circ, \xi_{2,s}^\circ, \cdots)\Transp \in \mathbb{R}^\mathbb{N}$.
\end{itemize}
Using 
\begin{align*}
Z_{n,m}^\ast (t) = Z_n^\circ (t) + (\tilde{Z}_{n,m}(t) -  Z_n^\circ (t)) + (Z_{n,m}^+(t) - \tilde{Z}_{n,m}(t)) \\
+ (Z_{n,m}^\ast (t) - Z_{n,m}^+(t)),
\end{align*}
the assertion of the theorem  follows from the following  lemmas.\\

\begin{lem}\label{le.lemma1}
 Under the assumptions of Theorem~\ref{th.boot} it holds true,  as $n \rightarrow \infty$, that 
\[ \sup\limits_{t\in[0,1]} \Vert Z_{n,m}^\ast (t) - Z_{n,m}^+(t) \Vert \overset{P}{\rightarrow} 0.\] 
\end{lem}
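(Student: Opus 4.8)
The plan is to show that the difference process $Z_{n,m}^\ast(t)-Z_{n,m}^+(t)$ is uniformly negligible by tracking the two sources of discrepancy between the pair $(X_t^\ast,\xi_t^\ast)$ and $(\xi_t^+)$: first, the fact that the bootstrap VAR coefficients $\widehat{A}_{j,p}(m)$ differ from the ``true-score'' estimates $\widetilde{A}_{j,p}(m)$, and second, that the resampling of innovations $e_t^\ast$ is done from the estimated-score residuals $\widehat{e}_{t,p}$ rather than from the true-score residuals $\widetilde{\epsilon}_t$ — recall, however, that the \emph{same} i.i.d. indices $I_1,\dots,I_n$ drive both, so the two innovation sequences are coupled and their difference is controlled by $\max_t\|\widehat{e}_{I_t,p}-\widetilde{\epsilon}_{I_t}\|$ plus the difference of the two centering terms. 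A convenient first step is to write $\xi_t^\ast-\xi_t^+$ as the solution of a perturbed VAR recursion driven by $(e_t^\ast-\epsilon_t^+)$ and by $(\widehat{A}_{p,m}-\widetilde{A}_{p,m})$ acting on the lagged $\xi_{t-j}^+$, and to bound it via the (bootstrap) companion-form / Baxter-type inequality, using that under Assumption \ref{ass2} the fitted VAR is stable with stability constants bounded in probability.

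Second, I would pass from the finite-dimensional score difference to the functional partial-sum difference. Since $X_{t,m}^\ast=\sum_{j=1}^m\xi_{j,t}^\ast\widehat v_j$ while $Z_{n,m}^+$ is built from $\sum_{l=1}^m\xi_{l,s}^+ v_l$, there are in fact two layers: the score difference $\xi^\ast-\xi^+$ and the eigenfunction difference $\widehat v_j-v_j$. For the eigenfunction layer I would invoke the standard perturbation bounds $\|\widehat v_j-v_j\|=O_P(\alpha_j^{-1}\|\widehat{\mathcal C}_0-\mathcal C_0\|_{HS})$ together with $\|\widehat{\mathcal C}_0-\mathcal C_0\|_{HS}=O_P(n^{-1/2})$ (valid under the $L^4$–$M$-approximability in Assumption \ref{ass1}(i)), and control $\sum_{j=1}^m\alpha_j^{-2}$ exactly through the rate in Assumption \ref{ass2}(ii); for the $U_{t,m}^\ast$ part note it is \emph{identical} in $X_t^\ast$ and does not appear in $Z_{n,m}^+$ at all — wait, it does appear in $Z_{n,m}^\ast$ but not in $Z_{n,m}^+$, so the $U^\ast$-contribution must also be shown to vanish, but this is exactly the negligibility of $n^{-1/2}\sum_{s\le nt}U_{s,m}^\ast$ uniformly in $t$, which follows from a maximal inequality (Theorem \ref{ThmA}/\ref{ThmB} applied in $\mathcal H$) together with $\E^\ast\|U_{t,m}^\ast\|^2 = n^{-1}\sum\|\widehat U_{s,m}-\overline U_m\|^2 \to \E\|U_{t,m}\|^2\to 0$ as $m\to\infty$, which I would prove as a side computation. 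Actually the cleanest route: absorb the $U^\ast$ term into Lemma \ref{le.lemma1} only if it is genuinely there; otherwise route it to a later lemma — I will treat $Z_{n,m}^\ast-Z_{n,m}^+$ restricted to the $X_{t,m}^\ast$ vs.\ $\sum\xi^+v$ comparison and handle $U^\ast$ where the decomposition places it.

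Third, for the uniform-in-$t$ control I would not bound the supremum pathwise term by term; instead I would apply the Serfling-type maximal inequality (Theorem \ref{ThmA}, with $\nu=2$) to the partial sums of the array $D_s:=\sum_{l=1}^m(\xi_{l,s}^\ast\widehat v_l-\xi_{l,s}^+ v_l)$, so that $\sup_t\|Z_{n,m}^\ast(t)-Z_{n,m}^+(t)\|\le n^{-1/2}\log_2(2n)\,\big(\E^\ast\|\sum_{s=1}^n D_s\|^2\big)^{1/2}$ up to constants, and then it suffices to show $n^{-1}\log_2^2(2n)\,\E^\ast\|\sum_{s=1}^n D_s\|^2\xrightarrow{P}0$. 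Expanding the second moment produces a VAR-type variance sum that, by the stability of the fitted model, is $O_P\big(n\,\|\widehat A_{p,m}-\widetilde A_{p,m}\|_F^2\,\mathrm{tr}\,\Sigma_e(m)+ n\,\max_j\|\widehat v_j-v_j\|^2\cdot m\big)$ plus the innovation-coupling term; dividing by $n$ and using Assumptions \ref{ass2}(ii) and \ref{ass2}(iv) (the latter gives $\|\widehat A_{p,m}-\widetilde A_{p,m}\|_F=O_P(\|\widetilde A_{p,m}-A_{p,m}\|_F)+$ estimation error, controlled by $m^4p^2$) drives everything to zero. The main obstacle I anticipate is precisely the bookkeeping in this last bound: carefully propagating $\|\widehat A_{p,m}-\widetilde A_{p,m}\|_F$ through the companion-form geometric series so that the polynomial-in-$p$ stability constants (coming from $\sum_{j>p}\|A_j(m)\|_F$ and the operator norm of the inverted autoregression) do not overwhelm the $n^{-1/2}$ gain — this is where Assumption \ref{ass2}(i)–(ii) must be used with full force, and where an argument that the fitted VAR remains stable uniformly (on an event of probability tending to one) is indispensable.
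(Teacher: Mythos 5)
Your proposal is correct and follows essentially the same route as the paper: the same three-way decomposition of $Z_{n,m}^\ast(t)-Z_{n,m}^+(t)$ into an eigenfunction-error term $\sum_l \xi_{l,s}^\ast(\hat v_l - v_l)$, a score-difference term $\sum_l(\xi_{l,s}^\ast-\xi_{l,s}^+)v_l$, and the $U_{t,m}^\ast$ remainder, each handled via Serfling's maximal inequality (Theorem~\ref{ThmA}), the auxiliary lemmas of \cite{PAP}, and the coupling of $e_t^\ast$ and $\epsilon_t^+$ through the shared resampling indices $I_t$. Two small corrections: the innovation discrepancy should be measured by the empirical average $\frac{1}{n-p}\sum_t\|\hat e_{t,p}-\tilde\epsilon_t\|^2$ (which is what the shared-index coupling actually yields for the conditional second moment), not by $\max_t\|\hat e_{I_t,p}-\tilde\epsilon_{I_t}\|$, which need not vanish even when the average does; and the relevant coefficient discrepancy $\|\hat A_{p,m}-\tilde A_{p,m}\|_F$ is controlled by the eigenfunction estimation error via Lemma 6.3 of \cite{PAP}, whereas Assumption~\ref{ass2}(iv) concerns $\|\tilde A_{p,m}-A_{p,m}\|_F$ and is not the driver here.
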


\begin{lem}\label{le.lemma2}
 Under the assumptions of Theorem~\ref{th.boot} it holds true, it is possible (after enlarging the probability space if needed) to define copies $Z_{c,n,m}^{+}$ of $Z_{n,m}^+$ and $\tilde{Z}_{c,n,m}$ of $\tilde{Z}_{n,m}$, such that
\[\sup\limits_{t\in[0,1]} \Vert Z_{n,m}^+(t) - \tilde{Z}_{n,m}(t) \Vert \overset{P}{\rightarrow} 0\]
 as $n \rightarrow \infty$.
\end{lem}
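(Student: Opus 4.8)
The plan is to construct the required copies by coupling the innovations of the two processes and propagating the closeness through the (vector) autoregressive recursions, exploiting that the target limit is a \emph{continuous} Gaussian process so that only $\sup$-type control is needed. Since $v_1,\dots,v_m$ are orthonormal, $\sup_{t\in[0,1]}\|Z_{n,m}^+(t)-\tilde Z_{n,m}(t)\|=\sup_{t\in[0,1]}n^{-1/2}\big\|\sum_{s=1}^{\lfloor nt\rfloor}(\xi_s^+-\tilde\xi_s)\big\|_2$ with $\|\cdot\|_2$ the Euclidean norm on $\mathbb{R}^m$, so it suffices to control the $\mathbb{R}^m$-valued partial sums of $\xi_s^+-\tilde\xi_s$. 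I would enlarge the space and, conditionally on $X_1,\dots,X_n$, define the pairs $(\epsilon_t^+,\varepsilon_t)$, i.i.d.\ over $t$, as an optimal Wasserstein-$2$ coupling of the empirical distribution $\widehat G_n^+$ of the centred residuals $\{\widetilde\epsilon_t-\bar{\widetilde\epsilon}\}_{t=p+1}^n$ and of $G_e^{(m)}$; the copy $\tilde Z_{c,n,m}$ is then the process in \eqref{eq.tildexi} driven by these $\varepsilon_t$ (extended to a stationary past in the usual way) and $Z_{c,n,m}^+$ is $Z_{n,m}^+$ itself, so that $\E^{\ast}\|\epsilon_t^+-\varepsilon_t\|_2^2=W_2^2(\widehat G_n^+,G_e^{(m)})=:\Delta_n$.

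The next step is a Beveridge--Nelson / summation-by-parts reduction. Writing the causal moving-average representations $\xi_s^+=\sum_{k\ge0}\Psi_k^+\,\epsilon_{s-k}^+$ (with $\{\Psi_k^+\}$ the inversion of $I-\sum_{j=1}^p\widetilde A_{j,p}(m)z^j$) and $\tilde\xi_s=\sum_{k\ge0}\Psi_k\,\varepsilon_{s-k}$ (inversion of $I-\sum_{j\ge1}A_j(m)z^j$), one obtains, uniformly in $t$,
\[
n^{-1/2}\!\!\sum_{s=1}^{\lfloor nt\rfloor}(\xi_s^+-\tilde\xi_s)=\Psi^+(1)\,n^{-1/2}\!\!\sum_{s=1}^{\lfloor nt\rfloor}(\epsilon_s^+-\varepsilon_s)+\big(\Psi^+(1)-\Psi(1)\big)n^{-1/2}\!\!\sum_{s=1}^{\lfloor nt\rfloor}\varepsilon_s+\mathcal R_{n,m}(t),
\]
where $\Psi^+(1)=(I-\sum_{j=1}^p\widetilde A_{j,p}(m))^{-1}$, $\Psi(1)=(I-\sum_{j\ge1}A_j(m))^{-1}$ and $\sup_t\|\mathcal R_{n,m}(t)\|=O_P(n^{-1/2})$, the latter because $\sum_k k\|\Psi_k\|_F<\infty$ and $\sum_k k\|\Psi_k^+\|_F=O_P(1)$ by $\sum_h(1+|h|)\|C_h\|_N<\infty$ and a Baxter-type bound for the fitted autoregression under Assumptions \ref{ass1}--\ref{ass2}. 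Conditionally on the data the partial sums $\sum_{s\le k}(\epsilon_s^+-\varepsilon_s)$ form an $\mathbb{R}^m$-valued martingale, so Doob's $L^2$-maximal inequality (or Theorem \ref{ThmA} with $\nu=2$) gives $\E^{\ast}\sup_t\|\Psi^+(1)n^{-1/2}\sum_{s\le\lfloor nt\rfloor}(\epsilon_s^+-\varepsilon_s)\|^2\le 4\|\Psi^+(1)\|_{\mathrm{op}}^2\Delta_n$; likewise $\E^{\ast}\sup_t\|n^{-1/2}\sum_{s\le\lfloor nt\rfloor}\varepsilon_s\|_2^2\le 4\,\mathrm{tr}\,\Sigma_e(m)$, so the middle term is handled once $\|\Psi^+(1)-\Psi(1)\|_{\mathrm{op}}^2\,\mathrm{tr}\,\Sigma_e(m)\to0$ in probability; the operator difference is bounded via $\|\sum_{j\le p}(\widetilde A_{j,p}(m)-A_{j,p}(m))\|_F$, $\|\sum_{j\le p}(A_{j,p}(m)-A_j(m))\|_F$ and $\sum_{j>p}\|A_j(m)\|_F$, which are controlled by Assumption \ref{ass2}(iii)--(iv) together with Baxter's inequality. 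Combining, $\sup_t\|Z_{c,n,m}^+(t)-\tilde Z_{c,n,m}(t)\|\overset{P}{\rightarrow}0$.

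The main obstacle is the first term, i.e.\ proving $\|\Psi^+(1)\|_{\mathrm{op}}^2\,\Delta_n\overset{P}{\rightarrow}0$: one must show that the centred-residual empirical distribution converges to $G_e^{(m)}$ in $W_2$ at a rate dominating the possible growth, as $m=m(n)\to\infty$, of $\|(I-\sum_{j\le p}\widetilde A_{j,p}(m))^{-1}\|_{\mathrm{op}}$. This decomposes into (a) uniform closeness of $\widetilde\epsilon_t$ to the true innovation $e_t(m)$, a truncation-plus-estimation error steered by Baxter's inequality and Assumption \ref{ass2}; (b) weak convergence of the empirical law of $\{e_t(m)\}$ to $G_e^{(m)}$, from ergodicity of $L^4$--$M$ approximable processes together with Assumption \ref{ass1}(iii), which keeps the limit stable in $m$; and (c) consistency of the residual second moment, $(n-p)^{-1}\sum_t(\widetilde\epsilon_t-\bar{\widetilde\epsilon})(\widetilde\epsilon_t-\bar{\widetilde\epsilon})^{\top}\to\Sigma_e(m)$ in Frobenius norm in probability, resting on the fourth-cumulant summability of Assumption \ref{ass1}(i). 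Upgrading (b)$+$(c) to $W_2$-convergence \emph{uniformly in the growing dimension} and verifying that the resulting rate beats the VAR-growth terms is the delicate book-keeping; the rate constraints in Assumption \ref{ass2} are precisely what make it go through.

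Finally, a softer alternative is available if one does the bootstrap functional CLT directly: establish, within this proof, that $\tilde Z_{n,m}\Rightarrow W$ (from the CLT for the Hilbert-valued linear process $\xi^\circ$, as for $Z_n^\circ$, plus the $m$-truncation bound) and that, conditionally on the data, $Z_{n,m}^+\Rightarrow W$ in probability, hence also unconditionally --- the latter via the same Beveridge--Nelson reduction, conditional convergence of the bootstrap long-run covariance to $C_W$ through \eqref{eq.CovOp}, conditional tightness from Theorems \ref{ThmA}--\ref{ThmB} applied to the i.i.d.\ bootstrap innovations, and a conditional Lindeberg CLT. Then, by Skorokhod's representation one gets copies of $Z_{n,m}^+$ and of $\tilde Z_{n,m}$ converging a.s.\ to copies of $W$; since these limits have the same law one may glue the two constructions along this common marginal so that both converge a.s.\ to one and the same $W$, and as $W$ has continuous paths the $J_1$-convergence in $D([0,1],\mathcal H)$ upgrades to uniform convergence, whence $\sup_t\|Z_{c,n,m}^+(t)-\tilde Z_{c,n,m}(t)\|\to0$ a.s., a fortiori in probability. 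This route relocates the estimates above into the covariance convergence but does not avoid them.
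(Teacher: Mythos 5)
Your overall architecture is close to the paper's: both arguments couple the innovations of $\xi^+_s$ and $\tilde\xi_s$ in the Mallows/Wasserstein-$2$ metric, push the coupling through the moving-average representations, and control the supremum by an $L^2$ maximal inequality (the paper splits $\xi_s^+-\tilde\xi_s$ into a coefficient-discrepancy term, a residual-versus-true-innovation term, and a resampled-versus-i.i.d.\ innovation term, handling the first two by direct second-moment bounds via Lemmas 6.1--6.8 of \cite{PAP} and the last by the coupling plus Theorem 1 of \cite{wu2007strong}; you compress the first two into a Beveridge--Nelson reduction with Doob's inequality). The genuine gap sits exactly where you yourself flag ``the delicate book-keeping'': you never establish that $\Delta_n=W_2^2(\widehat G_n^+,G_e^{(m)})\to 0$ when $m=m(n)\to\infty$, and your suggestion that the rate constraints of Assumption~\ref{ass2} are what make this go through points in the wrong direction. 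The paper defuses the growing dimension by a device absent from your sketch: split the $m$-dimensional innovation vector into its first $M_\epsilon$ coordinates and the tail, bound the tail contribution to $d_2$ by $\sum_{j>M_\epsilon}\E(e_{j}(m))^2\le\sum_{j>M_\epsilon}\lambda_j<\epsilon$ uniformly in $m$ (the innovation variance is dominated coordinatewise by $\E\xi_{j,t}^2=\lambda_j$, and $\mathcal C_0$ is trace class), and then prove $d_2\to0$ only in the \emph{fixed} dimension $M_\epsilon$, using Assumption~\ref{ass1}(iii) (convergence of $G_{e,M_\epsilon}^{(m)}$ to a continuous limit) together with Lemma 8.3 of Bickel and Freedman (weak convergence plus convergence of second moments implies $d_2$-convergence). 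There is in fact no rate competition to win: the $\|\Psi^+(1)\|$-type factors are $O_P(1)$ uniformly in $m$ and $p$ by Lemmas 6.1 and 6.5 of \cite{PAP}, so what is needed, and what is missing from your proposal, is the qualitative $W_2$-convergence in growing dimension itself.

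Two further points. Your Beveridge--Nelson remainder bound $\sup_t\|\mathcal R_{n,m}(t)\|=O_P(n^{-1/2})$ is not justified: the boundary terms are maxima over $k\le n$ of $L^2$ quantities whose law changes with $n$ through $m(n)$ and $p(n)$, so at best one obtains $o_P(1)$, and only after a uniform-integrability argument across dimensions; the paper sidesteps this by bounding $\E\|\cdot\|^2$ of the partial sums directly and invoking Serfling's and Wu's maximal inequalities. Finally, the Skorokhod-gluing alternative in your last paragraph is not a genuine shortcut: the law of $Z^+_{n,m}$ is random (it depends on the sample), so ``copies converging a.s.\ to a common $W$'' requires a careful formulation of conditional weak convergence in probability and a subsequence argument, and, as you concede, it presupposes exactly the covariance and coupling estimates whose absence constitutes the gap above.
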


\begin{lem} \label{le.lemma3}
 Under the assumptions of Theorem~\ref{th.boot} it holds true,  as $n \rightarrow \infty$, that 
\[ \sup\limits_{t\in[0,1]} \Vert \tilde{Z}_{n,m}(t) -  Z_n^\circ (t) \Vert \overset{P}{\rightarrow} 0.\]
\end{lem}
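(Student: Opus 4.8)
\medskip
\noindent\emph{Sketch of the argument for Lemma~\ref{le.lemma3}.}
The plan is to recognise the difference $\tilde Z_{n,m}-Z_n^\circ$ as (minus) the partial sum process built from the tail scores $\xi_{l,s}^\circ$, $l>m$, of the fictitious linear process $\xi^\circ$, and then to show, by a maximal inequality, that this tail contributes negligibly to the rescaled partial sums uniformly in $t$. Since the first $m$ coordinates of $\xi_t^\circ$ coincide with $\tilde\xi_t$, one has for every $t\in[0,1]$
\[
\tilde Z_{n,m}(t)-Z_n^\circ(t)=-\frac{1}{\sqrt n}\sum_{s=1}^{\floor*{nt}}U_{s,m}^\circ,\qquad U_{s,m}^\circ:=\sum_{l=m+1}^\infty\xi_{l,s}^\circ\,v_l,
\]
and therefore $\sup_{t\in[0,1]}\|\tilde Z_{n,m}(t)-Z_n^\circ(t)\|=n^{-1/2}\max_{1\le k\le n}\|\sum_{s=1}^kU_{s,m}^\circ\|$. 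It thus suffices to prove that $E[\max_{1\le k\le n}\|\sum_{s=1}^kU_{s,m}^\circ\|^\nu]=o(n^{\nu/2})$ for some $\nu\ge2$, which by Markov's inequality yields the claimed convergence in probability.

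Because $\{\tilde\xi_t\}$, and hence $\{\xi_t^\circ\}$, has exactly the autocovariance structure of the true score process, we have $\Cov(\xi_{l_1,s}^\circ,\xi_{l_2,s'}^\circ)=\langle C_{s-s'}(v_{l_1}),v_{l_2}\rangle$, so that, writing $S_k^\circ:=\sum_{s=1}^kU_{s,m}^\circ$,
\[
E\|S_k^\circ\|^2=\sum_{l=m+1}^\infty\sum_{s,s'=1}^k\langle C_{s-s'}(v_l),v_l\rangle\le k\sum_{l=m+1}^\infty\sum_{h\in\mathbb Z}|\langle C_h(v_l),v_l\rangle|=:k\,\tau_m ,
\]
and Assumption~\ref{ass1}(i) — giving $\sum_h\|C_h\|_N<\infty$ together with $\sum_l|\langle C_h(v_l),v_l\rangle|\le\|C_h\|_N$ — forces $\tau_m\to0$ as $m\to\infty$ by dominated convergence. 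To upgrade this to an order-$\nu$ bound with $\nu>2$, I would use that $\{U_{s,m}^\circ\}$ is a Hilbert-space-valued linear process whose i.i.d. innovations inherit finite fourth moments from the fourth-cumulant condition in Assumption~\ref{ass1}(i); a Rosenthal-type inequality then gives, with $\nu=4$, a bound $E\|S_k^\circ\|^\nu\le C(k\,\kappa_m)^{\nu/2}$, where $\kappa_m$ combines $\tau_m$ with the analogous tail of the summed fourth cumulants of the score coordinates and, crucially, $\kappa_m\to0$. Applying Theorem~\ref{ThmB} on the normed space $\mathcal H$ with the linear function $g(k):=c\,\kappa_m k$ (which trivially satisfies the hypotheses stated there), one obtains $E[\max_{1\le k\le n}\|S_k^\circ\|^\nu]\le K(\kappa_m n)^{\nu/2}$, whence
\[
E\Big[\sup_{t\in[0,1]}\|\tilde Z_{n,m}(t)-Z_n^\circ(t)\|^\nu\Big]\le K\,\kappa_{m(n)}^{\nu/2}\longrightarrow0
\]
as $n\to\infty$, since $m(n)\to\infty$; Markov's inequality then completes the proof.

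I expect the main obstacle to be the order-$\nu$ ($\nu>2$) moment bound on the partial sums of the tail process $\{U_{s,m}^\circ\}$ \emph{with a constant $\kappa_m$ that vanishes as $m\to\infty$}: this requires verifying that the fourth-order cumulants of $\xi_t^\circ$ are summable and that their $l>m$ tails vanish, which in turn means extracting the needed uniform control on the fourth moments of the innovations $\varepsilon_t$ of $\{\tilde\xi_t\}$ from Assumption~\ref{ass1}(i) (with Assumption~\ref{ass1}(iii) ensuring that the infinite-dimensional object $\xi^\circ$ is well defined in the first place). If one prefers to work only at second moments, the same conclusion follows from Theorem~\ref{ThmA} with $\nu=2$ and $g(F_{a,k})=\tau_m k$, at the cost of the factor $(\log_2 2n)^2$, which then has to be absorbed by the stronger requirement $\tau_{m(n)}\log^2 n\to0$.
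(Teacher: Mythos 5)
Your proposal is correct and follows essentially the same route as the paper: you identify $\tilde Z_{n,m}-Z_n^\circ$ as the partial sum process of the tail scores $\sum_{l>m}\xi^\circ_{l,s}v_l$, bound its fourth moment by $(k\kappa_m)^2$ with $\kappa_m\to0$ via summability of the autocovariances and fourth cumulants from Assumption~\ref{ass1}(i), and conclude with Theorem~\ref{ThmB} and Markov's inequality, exactly as the paper does (it just writes out the cumulant expansion explicitly where you invoke a Rosenthal-type bound). The obstacle you flag --- vanishing of the $l>m$ tail of the summed fourth cumulants --- is indeed the point the paper handles through the bound following its equation for $g^2(n)$.
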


\begin{lem} \label{le.lemma4}
 Under the assumptions of Theorem~\ref{th.boot} it holds true as $n \rightarrow \infty$, that 
\[  (Z_n^\circ (t))_{t \in [0,1]} \Rightarrow W,\]
where $W$ is the  Brownian motion  given in Theorem~\ref{th.boot}.
\end{lem}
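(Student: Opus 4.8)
\medskip
\noindent\textbf{Proof proposal for Lemma~\ref{le.lemma4}.}

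The plan is to prove the invariance principle for the infinite-dimensional partial sum process $(Z_n^\circ(t))_{t\in[0,1]}$ by a finite-dimensional approximation. Write $X_s^\circ=\sum_{l\ge 1}\xi_{l,s}^\circ v_l$ and, for fixed $m\in\mathbb N$, let $\Pi_m$ denote the orthogonal projection of $\mathcal H$ onto $\mathrm{span}\{v_1,\dots,v_m\}$ and $\Pi_m^\perp=I-\Pi_m$; decompose
\[
Z_n^\circ(t)=Z_{n,m}^{\,\circ}(t)+R_{n,m}(t),\qquad Z_{n,m}^{\,\circ}(t)=\tfrac{1}{\sqrt n}\sum_{s=1}^{\lfloor nt\rfloor}\Pi_m X_s^\circ,\quad R_{n,m}(t)=\tfrac{1}{\sqrt n}\sum_{s=1}^{\lfloor nt\rfloor}\Pi_m^\perp X_s^\circ .
\]
I would then establish: (a) for every fixed $m$, $Z_{n,m}^{\,\circ}\Rightarrow W^{(m)}$ in $D([0,1],\mathcal H)$ as $n\to\infty$, where $W^{(m)}$ is an $\mathcal H$-valued Brownian motion with covariance operator $\Pi_m C_W\Pi_m$; (b) $W^{(m)}\Rightarrow W$ as $m\to\infty$; and (c) $\lim_{m\to\infty}\limsup_{n\to\infty}P\bigl(\sup_{t\in[0,1]}\|R_{n,m}(t)\|>\varepsilon\bigr)=0$ for every $\varepsilon>0$. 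The lemma then follows from (a)--(c) by the standard triangular-array argument for weak convergence.

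For (a), by construction $\{\Pi_m X_s^\circ\}$ has the law of the image under the isometry $l\mapsto v_l$ of the $\mathbb R^m$-valued linear process $\{\widetilde\xi_s(m)\}$ of \eqref{eq.tildexi}, a causal VAR$(\infty)$ driven by i.i.d.\ innovations $\varepsilon_s$ with $\E\|\varepsilon_s\|^2<\infty$; Assumption~\ref{ass1}(i) with the Baxter-type bounds used in \cite{cheng1993baxter} and \cite{PAP} gives moving-average coefficients with $\sum_j(1+j)\|\Psi_j(m)\|_F<\infty$. A Beveridge--Nelson/Phillips--Solo decomposition reduces $n^{-1/2}\sum_{s\le\lfloor n\cdot\rfloor}\widetilde\xi_s(m)$ to $\Psi(1)\,n^{-1/2}\sum_{s\le\lfloor n\cdot\rfloor}\varepsilon_s$ up to a uniformly negligible remainder, and Donsker's theorem gives convergence to a Brownian motion in $\mathbb R^m$ with covariance $\sum_{h\in\mathbb Z}\Cov(\widetilde\xi_0(m),\widetilde\xi_h(m))$. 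The decisive bookkeeping is that replacing the innovations $e_t(m)$ in \eqref{eq.xi-VAR} by i.i.d.\ variables with the same covariance $\Sigma_e(m)$ leaves the autocovariances unchanged, so $\Cov(\widetilde\xi_{l_1,0},\widetilde\xi_{l_2,h})=\langle\mathcal C_h v_{l_1},v_{l_2}\rangle$ and the long-run covariance of $\widetilde\xi(m)$ equals $\bigl(2\pi f_{l_1,l_2}(0)\bigr)_{l_1,l_2\le m}$; transporting back to $\mathcal H$ and using \eqref{eq.CovOp} identifies $W^{(m)}$ as the Brownian motion with covariance operator $\Pi_m C_W\Pi_m$. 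Claim (b) is then immediate: $\langle\Pi_m C_W\Pi_m x,y\rangle=2\pi\sum_{r,s\le m}f_{r,s}(0)\langle v_r,x\rangle\langle v_s,y\rangle\to\langle C_W x,y\rangle$ and $\mathrm{tr}(\Pi_m C_W\Pi_m)\uparrow\mathrm{tr}(C_W)<\infty$ (finiteness since $\mathrm{tr}(C_W)\le\sum_h\|\mathcal C_h\|_N<\infty$), so the covariance operators converge in nuclear norm, which for centred Gaussian laws on $\mathcal H$ entails weak convergence and hence $W^{(m)}\Rightarrow W$.

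For (c) I would invoke Serfling's maximal inequality. Since $\{\Pi_m^\perp X_s^\circ\}$ is stationary with lag-$h$ autocovariance operator $\Pi_m^\perp\mathcal C_h\Pi_m^\perp$, one has $\E\bigl\|\sum_{s=a+1}^{a+k}\Pi_m^\perp X_s^\circ\bigr\|^2\le k\,\eta_m$ with $\eta_m:=\sum_{h\in\mathbb Z}\|\Pi_m^\perp\mathcal C_h\Pi_m^\perp\|_N$, and $\eta_m\to 0$ as $m\to\infty$ by dominated convergence (each $\|\Pi_m^\perp\mathcal C_h\Pi_m^\perp\|_N\to0$ because $\mathcal C_h$ is nuclear and $\Pi_m^\perp\to 0$ strongly, while $\sum_h\|\mathcal C_h\|_N<\infty$ dominates). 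With $g(F_{a,k})=k\eta_m$, which satisfies the superadditivity \eqref{ThmAineq}, Theorem~\ref{ThmA} bounds $\E[\max_{k\le n}\|\sum_{s\le k}\Pi_m^\perp X_s^\circ\|^2]$ by $(\log_2 2n)^2 n\eta_m$; to remove the logarithmic factor (needed so that $\limsup_n$ stays finite) I would instead establish $\E\|\sum_{s\le n}\Pi_m^\perp X_s^\circ\|^4\le(c_m n)^2$ with $c_m\to0$ — expanding $\E\|\cdot\|^4$ into a variance-squared term (controlled by $\eta_m^2$), a cross-covariance term (controlled by a tail of $\sum_h(1+|h|)\|\mathcal C_h\|_{HS}^2$), and a joint fourth-cumulant term (controlled using that $X_s^\circ$ is linear with $L^4$ innovations) — and then apply Theorem~\ref{ThmB} with $\nu=4$ and $g(n)=c_m n$, giving $P(\sup_t\|R_{n,m}(t)\|>\varepsilon)\le K c_m^2/\varepsilon^4$ uniformly in $n$. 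Combining (a)--(c) proves the lemma. As an alternative route, once one checks that $\{X_s^\circ\}$ is $L^4$--$M$ approximable with summable fourth-order cumulants and full-rank spectral density, one may simply apply the invariance principle behind \eqref{eq.WConX} (Theorem~1.2 of \cite{jirak2013weak}) to $\{X_s^\circ\}$ and identify the limiting covariance operator with $C_W$ through $\langle C_W x,y\rangle=\sum_h\Cov(\langle X_0^\circ,x\rangle,\langle X_h^\circ,y\rangle)=\sum_h\langle\mathcal C_h x,y\rangle$, since $X^\circ$ and $\mathcal X$ share all autocovariances.

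The main obstacle is claim (c): one needs a maximal inequality for the projected partial sums that is \emph{uniform in $n$}, which rules out the crude Theorem~\ref{ThmA} bound (with its $(\log n)^2$ factor) and forces passage to fourth moments, hence control of the fourth-order dependence structure of the auxiliary process $\xi^\circ$. These fourth cumulants are \emph{not} those of $\mathcal X$, since $\xi^\circ$ is linear by construction whereas $\mathcal X$ need not be; the bound must therefore be derived from $\E\|\varepsilon_s\|^4<\infty$ — inherited from $\E\|X_t\|^4<\infty$ via the $L^4$-convergent prediction-error representation — together with the summability of the moving-average coefficients, and it must be shown to decay like $\eta_m^2$ under $\Pi_m^\perp$, once more by a dominated-convergence argument in Hilbert--Schmidt/nuclear norm. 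The remaining ingredients — the Beveridge--Nelson reduction, Donsker's theorem, and the Gaussian continuity in (b) — are routine.
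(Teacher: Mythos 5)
Your proposal is correct and follows essentially the same route as the paper: the three-part scheme (finite-dimensional convergence to $W^{(m)}$, convergence $W^{(m)}\Rightarrow W$ of the Gaussian limits, and uniform-in-$n$ negligibility of the tail $\sum_{l>m}\xi^\circ_{l,s}v_l$ via a fourth-moment bound and Serfling's Theorem~\ref{ThmB}) is exactly the Billingsley Theorem 3.2 argument used in the paper, which likewise exploits that the first $m$ components of $\xi^\circ$ form the linear VAR$(\infty)$ process $\tilde\xi(m)$ with i.i.d.\ innovations. The only cosmetic difference is that you derive step (a) by a Beveridge--Nelson reduction plus Donsker, whereas the paper cites Theorem A.1 of \cite{AUE}; both reduce to the same summability $\sum_j j\|\Psi_j(m)\|_F<\infty$, and your remark that the fourth cumulants of the auxiliary linear process must be controlled through $\E\|\varepsilon_s\|^4<\infty$ and the MA coefficients (rather than taken directly from Assumption~\ref{ass1}) is a point the paper glosses over.
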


\vspace*{0.2cm}

\noindent{\bf Proof of Lemma~\ref{le.lemma1}:} \  Using the definition of $Z_{n,m}^\ast$ and $Z_{n,m}^+$, we write
\begin{align*}
&Z_{n,m}^\ast(t) - Z_{n,m}^+(t) = \frac{1}{\sqrt{n}} \sum_{s=1}^{\floor*{nt}} \sum_{l=1}^m  ( \xi_{l,s}^\ast \hat{v}_l + U_{s,m}^\ast - \xi_{l,s}^+ v_l) \\
&= \frac{1}{\sqrt{n}} \sum_{s=1}^{\floor*{nt}} \sum_{l=1}^m (\xi_{l,s}^\ast \hat{v}_l- \xi_{l,s}^+v_l \pm \xi_{l,s}^\ast v_l) + \frac{1}{\sqrt{n}} \sum_{s=1}^{\floor*{nt}} U_{s,m}^\ast \\
&= \frac{1}{\sqrt{n}} \sum_{s=1}^{\floor*{nt}} \sum_{l=1}^m  \xi_{l,s}^\ast(\hat{v}_l -v_l) + \frac{1}{\sqrt{n}} \sum_{s=1}^{\floor*{nt}} \sum_{l=1}^m (\xi_{l,s}^\ast - \xi_{l,s}^+)v_l + \frac{1}{\sqrt{n}} \sum_{s=1}^{\floor*{nt}} U_{s,m}^\ast \\
& =: V_{n,m}^\ast (t) + D_{n,m}^\ast (t) + R_{n,m}^\ast (t)
\end{align*}
We will show convergence to zero for $V_{n,m}^\ast, D_{n,m}^\ast$ and $R_{n,m}^\ast$ separately. For $V_{n,m}^\ast$ and $R_{n,m}^\ast$, we will use Theorem~\ref{ThmA}    combined with the results of \cite{PAP}, Lemma 6.8 and Lemma 6.6. 

Consider $V_{n,m}^\ast$.  Recall that $m$ and $p$ depend on $n$. We will sometimes write $m(n)$ and $p(n)$ in the following. First, we define the function $g$ from Theorem \ref{ThmA}, using calculations as in Lemma 6.8 of  \cite{PAP}.
\begin{align*}
 V_{n,m}^\ast(1) &= \frac{1}{\sqrt{n}} \sum_{s=1}^n \sum_{l=1}^{m} \xi_{l,s}^\ast (\hat{v}_l -v_l) \\
 \E[\Vert V_{n,m}^\ast(1) \Vert^2] &\leq \sum_{l=1}^{m} \E\Vert \hat{v}_l - v_l \Vert^2 \cdot \frac{1}{n}\sum_{r=1}^n \sum_{s=1}^n \Vert \Gamma_{r-s}^\ast \Vert_F\\
&\leq C \frac{1}{n} \sum_{j=1}^{m}\alpha_j^{-2}\frac{1}{n}\sum_{r=1}^n \sum_{s=1}^n \Vert \Gamma_{r-s}^\ast \Vert_F
\end{align*} 
where $\Gamma_{r-s}^\ast= \E[\xi_r^\ast \xi_s^{\ast^T}]$ the autocovariance matrix function of the process $\{\xi_t^\ast\}$. 
For the last inequality, we used from \cite{PAP}, that,
\[\sum_{l=1}^{m} \E\Vert \hat{v}_l -v_l \Vert^2 \leq \mathcal{O}_P(\frac{1}{n} \sum_{j=1}^{m}\alpha_j^{-2} ). \]
With the same arguments we get for any $a\geq 0$, that, 
\begin{align*}
\E\Vert \sum_{s=a+1}^{a+n} \sum_{l=1}^{m} \xi_{l,s}^\ast (\hat{v}_l -v_l) \Vert^2  \leq C \Big(\frac{1}{n} \sum_{j=1}^{m}\alpha_j^{-2}\Big)\sum_{r=a+1}^{a+n} \sum_{s=a+1}^{a+n} \Vert \Gamma_{r-s}^\ast  \Vert =: g^{\frac{1}{2}\nu}(F_{a,n})
\end{align*}
Next, check (\ref{ThmAineq}) for $\nu=2$:
\begin{align*}
&g(F_{a,k}) +g(F_{a+k,l}) \\
 =&C \frac{1}{n} \sum_{j=1}^{m}\alpha_j^{-2}\sum_{r=a+1}^{a+k}\sum_{s=a+1}^{a+k}\Vert \Gamma_{r-s}^\ast \Vert  + C \frac{1}{n} \sum_{j=1}^{m}\alpha_j^{-2}\sum_{r=(a+k)+1}^{(a+k)+l}\sum_{s=(a+k)+1}^{(a+k)+l}\Vert \Gamma_{r-s}^\ast \Vert \displaybreak[0]\\
&= C \frac{1}{n} \sum_{j=1}^{m}\alpha_j^{-2}\Big(\sum_{r=a+1}^{a+k}\sum_{s=a+1}^{a+k}\Vert \Gamma_{r-s}^\ast \Vert  +\sum_{r=(a+k)+1}^{(a+k)+l}\sum_{s=(a+k)+1}^{(a+k)+l}\Vert \Gamma_{r-s}^\ast \Vert \Big)  \\
& \leq   C \frac{1}{n} \sum_{j=1}^{m}\alpha_j^{-2}\sum_{r=a+1}^{a+k+l}\sum_{s=a+1}^{a+k+l} \Vert \Gamma_{r-s}^\ast \Vert  = g(F_{a,k+l})
\end{align*}
By Theorem \ref{ThmA}, 
\begin{align*}
\E[ \max\limits_{1\leq k \leq n}\Vert \sum_{s=1}^k \sum_{l=1}^{m(n)} \xi_{l,s}^\ast (\hat{v}_l -v_l) \Vert^2] \leq \log_2(2n)^2g(F_0,n) \\
= \log_2(2n)^2 C \frac{1}{n} \sum_{j=1}^{m}\alpha_j^{-2}\cdot \sum_{r=1}^n \sum_{s=1}^n \Vert \Gamma_{r-s}^\ast \Vert,
\end{align*}
that is, 
\begin{align*}
 & \E[\sup\limits_{t\in [0,1]}\Vert  V_{n,m}^\ast(t) \Vert^2] = \E[ \max\limits_{1\leq k\leq n}\Vert \frac{1}{n^{1/2}} \sum_{s=1}^k \sum_{l=1}^{m(n)}\xi_{l,s}^\ast (\hat{v}_l -v_l) \Vert ^2] \\
& = \frac{1}{n}\E[ \max\limits_{1\leq k \leq n} \Vert\sum_{s=1}^k \sum_{l=1}^{m(n)} \xi_{l,s}^\ast (\hat{v}_l -v_l) \Vert^2] \\
& \leq \frac{1}{n} \log_2(2n)^2 C \frac{1}{n} \sum_{j=1}^{m}\alpha_j^{-2}\sum_{r=1}^n \sum_{s=1}^n \Vert \Gamma_{r-s}^\ast \Vert \\
& \leq \log_2(2n)^2 \mathcal{O}_P(\frac{1}{n}\sum_{j=1}^{m(n)} \alpha_j^{-2}) = \frac{1}{n^{1/2}} \log_2(2n)^2 \mathcal{O}_P(\frac{1}{n^{1/2}}\sum_{j=1}^{m(n)} \alpha_j^{-2}),
\end{align*}
and this converges to zero for $n \to \infty$, because we have by our assumptions that $\frac{1}{n}\sum_{r=1}^n \sum_{s=1}^n \Vert \Gamma_{r-s}^\ast \Vert \leq \mathcal{O}_P(1)$ and $1/\sqrt{n}\sum_{j=1}^{m(n)} \alpha_j^{-2}=O_P(1)$ .

Consider $R_{n,m}^\ast$.
We proceed similar as for $V_{n,m}^\ast$ in order  to define $g$.  In particular, we have 
\begin{align*}
&R_{n,m}^\ast(1) = \frac{1}{n^{1/2}} \sum_{s=1}^n U_{s,m}^\ast, \ \ \ \mbox{and,}\\
&\E\Vert R_{n,m}^\ast(1) \Vert^2 \leq \frac{2}{n}\sum_{s=1}^n \Vert \hat{U}_{s,m} \Vert^2 + 2 \Vert \bar{\hat{U}}_n \Vert^2,
\end{align*}
by the definition of $U_{s,m}^\ast$ in Step 3 of the bootstrap algorithm.  Then,  
\begin{align*}
\E\Vert \sum_{s=1}^n U_{s,m}^\ast \Vert^2 & \leq 2 \sum_{s=1}^n \Vert \hat{U}_{s,m} \Vert^2 + 2n \Vert \bar{\hat{U}}_n \Vert ^2 = 2 \sum_{s=1}^n \Vert \hat{U}_{s,m} \Vert^2 + 2n \Vert \frac{1}{n} \sum_{s=1}^n \hat{U}_{s,m} \Vert^2\displaybreak[0] \\
& \leq  2 \sum_{s=1}^n \Vert \hat{U}_{s,m} \Vert^2 + 2n(\frac{1}{n} \Vert  \sum_{s=1}^n \hat{U}_{s,m} \Vert^2 )= 4 \sum_{s=1}^n \Vert \hat{U}_{s,m} \Vert^2 \\
& \leq 16 n \Vert \hat{C}_0 \Vert_{HS} \Big( (\sum_{j=1}^{m} \Vert \hat{v}_j -v_j \Vert)^2 + \sum_{j=1}^{m}\Vert \hat{v}_j -v_j \Vert^2 \Big),
\end{align*}
as in Lemma 6.6 of \cite{PAP}. 
So, it holds for any $a\geq 0$ and for $\nu =2$ that
\[\E\Vert \sum_{s=a+1}^{a+n} U_{s,m}^\ast \Vert^2  \leq 16n \Vert \hat{C}_0 \Vert_{HS}\Big( (\sum_{j=1}^{m} \Vert \hat{v}_j -v_j \Vert)^2 + \sum_{j=1}^{m}\Vert \hat{v}_j -v_j \Vert^2 \Big).\]
Next, check (\ref{ThmAineq}). We have, 
\begin{align*}
g(F_{a,k}) + g(F_{a+k,l}) =&  16k \Vert \hat{C}_0 \Vert_{HS}\Big( (\sum_{j=1}^{m} \Vert \hat{v}_j -v_j \Vert)^2 + \sum_{j=1}^{m}\Vert \hat{v}_j -v_j \Vert^2 \Big) \\
& + 16l \Vert \hat{C}_0 \Vert_{HS}\Big( (\sum_{j=1}^{m} \Vert \hat{v}_j -v_j \Vert)^2 + \sum_{j=1}^{m}\Vert \hat{v}_j -v_j \Vert^2 \Big)\\
 =& 16(k+l)\Vert \hat{C}_0 \Vert_{HS} \Big( (\sum_{j=1}^{m} \Vert \hat{v}_j -v_j \Vert)^2 + \sum_{j=1}^{m}\Vert \hat{v}_j -v_j \Vert^2 \Big).
\end{align*}
From the proof of Lemma 6.6 of \cite{PAP}, we have that
\begin{equation*}
\Vert \hat{C}_0 \Vert_{HS}  \Big( (\sum_{j=1}^{m} \Vert \hat{v}_j -v_j \Vert)^2 + \sum_{j=1}^{m}\Vert \hat{v}_j -v_j \Vert^2 \Big) \leq \mathcal{O}_p(\frac{1}{n^{1/2}} \sum_{j=1}^{m} \alpha_j^{-2}),
\end{equation*}
so we can conclude with the help of Theorem \ref{ThmA}  that
\begin{align*}
& \E\Vert \sup\limits_{t \in [0,1]} R_{n,m}^\ast (t) \Vert^2 ]  = \E\Vert \max\limits_{1\leq k\leq n} \frac{1}{n^{1/2}}\sum_{s=1}^k U_{s,m}^\ast \Vert^2 \\
 =& \frac{1}{n}\E\Vert \max\limits_{1\leq k\leq n} \sum_{s=1}^k U_{s,m}^\ast \Vert^2\leq \log_2(2n)^2 g(F_{0,n}) \\
= &\frac{1}{n}\log_2(2n)^2 16 n \Vert \hat{C}_0 \Vert_{HS}  \Big( (\sum_{j=1}^{m} \Vert \hat{v}_j -v_j \Vert)^2 + \sum_{j=1}^{m}\Vert \hat{v}_j -v_j \Vert^2 \Big) \\
\leq& \log_2(2n)^2 \mathcal{O}_p(\frac{1}{n^{1/2}} \sum_{j=1}^{m(n)} \alpha_j^{-2})  = \frac{1}{n^{1/4}} \log_2(2n)^2 \mathcal{O}_P(\frac{1}{n^{1/4}} \sum_{j=1}^{m(n)} \alpha_j^{-2}),
\end{align*}
and this is independent of $t$ and converges to zero for $n \to \infty$. 

Consider next $D_{n,m}^\ast$.  To handle this term, we 
 proceed differently. We will show that for arbitrary $t\in [0,1]$, respectively $1\leq k\leq n$, it holds that, 
\[ \frac{1}{n^{1/2}} \sum_{s=1}^{\floor*{nt}} \sum_{l=1}^{m} (\xi_{l,s}^\ast - \xi_{l,s}^+)v_l = \frac{1}{n^{1/2}} \sum_{s=1}^{k} \sum_{l=1}^{m} (\xi_{l,s}^\ast - \xi_{l,s}^+)v_l    \]
converges to zero in probability.  
To do so, we will follow the lines of the proof of Lemma 6.7 (\cite{PAP}). We have, 
\begin{align*}
\E\Vert D_{n,m}^\ast (t) \Vert^2 = \frac{1}{n} \sum_{r,s=1}^{\floor*{nt}} \sum_{l=1}^{m} \I_l\Transp \E[\xi_r^\ast (\xi_s^\ast - \xi_s^+)\Transp] \I_l+\frac{1}{n} \sum_{r,s=1}^{\floor*{nt}} \sum_{l=1}^{m} \I_l\Transp \E[\xi_r^+ (\xi_s^+ - \xi_s^\ast)\Transp] \I_l \\
=: D_{n,m}^{(1)}(\floor*{nt}) +  D_{n,m}^{(2)}(\floor*{nt})
\end{align*}
For simpler notation, we write $k=\floor*{nt}$, $1 \leq k \leq n$. Starting with $D_{n,m}^{(1)}$, we upper bound the expression:
\begin{align}
& D_{n,m}^{(1)}(k) = \frac{1}{n} \sum_{r,s=1}^k \sum_{l=1}^{m} \I_l\Transp \E[\xi_r^\ast(\xi_s^\ast -\xi_s^+)\Transp] \I_l \nonumber\\
& = \frac{1}{n} \sum_{r,s=1}^k \sum_{j=1}^{m} \sum_{l=0}^\infty \I_j\Transp \hat{\Psi}_{l,p}(m) \Sigma_{\epsilon}^\ast (m) \Big(\hat{\Psi}_{l+s-r,p}(m) - \tilde{\Psi}_{l+s-r,p}(m)\Big)\Transp \I_j \label{1}\\
& + \frac{1}{n}\sum_{r,s=1}^k\sum_{j=1}^{m}\sum_{l=0}^\infty \I_j\Transp \hat{\Psi}_{l,p}(m) \E[\epsilon_{r,p}^\ast(m)\Big(\epsilon_{t,p}^\ast(m) - \epsilon_{t,p}^+(m)\Big)]\tilde{\Psi}_{l+s-r,p}(m)\Transp \I_j \label{2}
\end{align}
Here, $\Sigma^\ast_{\epsilon}\delta_{t,s}= \E[\epsilon_{t,p}^\ast \epsilon_{s,p}^{\ast\Transp}]$ and $\tilde{\Psi}_{j,p}(m)$,  respectively,  $\hat{\Psi}_{j,p}(m)$ $j=1,2,...$, are the coefficient matrices of  the power series  $\hat{A}_{p,m}^{-1}(z)$,
 respectively,  $\tilde{A}_{p,m}^{-1}(z)$,  $|z|\leq 1$, with $\hat{\Psi}_{0,p}(m) = \tilde{\Psi}_{0,p}(m) = \I_m$. \\
We will handle terms (\ref{1}) and (\ref{2}) separately. 
\begin{multline*}
 \Vert (\ref{1})\Vert_F\\
\leq \Vert \Sigma^\ast_{\epsilon}(m)\Vert_F \sum_{l=0}^\infty \Vert \sum_{j=1}^{m} \I_j\Transp \hat{\Psi}_{l,p}(m) \Vert_F \frac{1}{n}\sum_{r,s=1}^k \Vert \sum_{j=1}^{m} \I_j\Transp \big(\hat{\Psi}_{l+s-r,p}(m) - \tilde{\Psi}_{l+s-r,p}(m) \big)\Vert_F \\
\leq \Vert \Sigma_{\epsilon}(m)\Vert_F \sum_{l=0}^\infty \Vert \sum_{j=1}^{m} \I_j\Transp \hat{\Psi}_{l,p}(m) \Vert_F \sum_{l=0}^\infty \Vert \sum_{j=1}^{m} \I_j\Transp \hat{\Psi}_{l,p}(m) - \tilde{\Psi}_{l,p}(m) \Vert_F \leq \mathcal{O}_P(1) 
\end{multline*}
since Lemma 6.1 and 6.5 of \cite{PAP} hold uniformly in m (and p).
\begin{multline*}
 \Vert (\ref{2}) \Vert_F = \Vert \frac{1}{n} \sum_{r,s=1}^k \sum_{j=1}^{m} \sum_{l=0}^\infty \I_j\Transp \hat{\Psi}_{l,p}(\hspace{-1pt} m \hspace{-1pt} ) \E[\epsilon_{r,p}^\ast(\hspace{-1pt} m \hspace{-1pt} ) \big(\epsilon_{t,p}^\ast(\hspace{-1pt} m \hspace{-1pt} ) - \epsilon_{t,p}^+(\hspace{-1pt} m \hspace{-1pt} )\big)] \tilde{\Psi}_{l+s-r,p}( \hspace{-1pt} m \hspace{-1pt} ) \Vert_F\\
 \leq \sqrt{\E\Vert \epsilon_{r,p}^\ast (\hspace{-1pt} m \hspace{-1pt} ) \Vert^2\E\Vert \epsilon_{t,p}^\ast(\hspace{-1pt} m \hspace{-1pt} ) \!- \!\epsilon_{t,p}^+(\hspace{-1pt} m \hspace{-1pt} ) \Vert^2} \cdot \underbrace{ \sum_{l=0}^\infty \Vert \sum_{j=1}^{m} \I_j\Transp \hat{\Psi}_{l,p}(\hspace{-1pt} m \hspace{-1pt} ) \Vert}_{\leq \mathcal{O}_P(1)} \cdot \underbrace{\sum_{l=0}^\infty \Vert \sum_{j=1}^{\hspace{-1pt} m \hspace{-1pt} } \I_j\Transp \tilde{\Psi}_{l,p}(m) \Vert}_{\leq \mathcal{O}_P(1)}
\end{multline*}
uniformly in $m$ (and $p$). We will now show that $\E[\Vert \epsilon_{r,p}^\ast(m) - \epsilon_{r,p}^+(m) \Vert^2]$ converges to zero in probability. 
\begin{align}
&\E[\Vert \epsilon_{r,p}^\ast(m) - \epsilon_{r,p}^+(m) \Vert^2] \nonumber \\
 \leq& \frac{2}{n-p}\sum_{r=p+1}^n \Vert \hat{\epsilon}_{r,p}(m)-\tilde{\epsilon}_{r,p}(m)\Vert^2 + 4 \big(\Vert \bar{\hat{\epsilon}}_n(m) \Vert^2 + \Vert \bar{\tilde{\epsilon}}_n(m) \Vert^2\big)\nonumber \\
 \leq& \frac{4}{n-p} \sum_{r=p+1}^n \Vert \hat{\xi_r}(m) - \xi_r(m) \Vert^2 \label{a}\\
& + \frac{4}{n-p}\sum_{r=p+1}^n \Vert \sum_{j=1}^{p} \hat{A}_{j,p}(m) \hat{\xi}_{r-j}(m) - \tilde{A}_{j,p}(m)\xi_{r-j}(m) \Vert^2 \label{b}\\
& + 4 \big(\Vert \bar{\hat{\epsilon}}_n(m) \Vert^2 + \Vert \bar{\tilde{\epsilon}}_n(m) \Vert^2\big) \label{c}
\end{align}
For (\ref{a}), note that
\begin{align*}
\frac{1}{n-p} \sum_{r=p+1}^n \Vert \hat{\xi}_r(m) -\xi_r(m) \Vert^2 \leq \frac{1}{n-p} \sum_{r=p+1}^n \Vert X_r\Vert^2 \sum_{j=1}^{m}\Vert \hat{v}_j-v_j \Vert^2 \\
 = \mathcal{O}_P(\frac{1}{n}\sum_{j=1}^{m(n)} \alpha_j^{-2})\overset{n \to\infty}{\rightarrow} 0
\end{align*}
Next, 
\begin{align*}
(\ref{b})&  \leq 2 \sum_{j=1}^{p} \Vert \hat{A}_{j,p}(m) \Vert^2 \frac{1}{n-p} \sum_{r=p+1}^n \Vert \hat{\xi}_{r-j}(m) - \xi_{r-j}(m) \Vert \\
& + 2 \sum_{j=1}^{p}\Vert \hat{A}_{j,p}(m) - \tilde{A}_{j,p}(m) \Vert^2 \frac{1}{n-p} \sum_{r=p+1}^n \Vert \xi_{r-j}(m)\Vert^2 \\
& \leq \mathcal{O}_P(1) \cdot \mathcal{O}_P(\frac{1}{n}\sum_{j=1}^{m(n)}\alpha_j^{-2}) \\
 & + \mathcal{O}_P\Big( (p(n)\lambda_{m(n)}^{-1}\sqrt{m(n)}+ p(n)^2)^2\sqrt{\frac{1}{n}\sum_{j=1}^{m(n)}\alpha_j^{-2}}\Big)\cdot \mathcal{O}_P(\frac{m(n)}{n-p(n)}) \\
 & \leq \mathcal{O}_P(\frac{1}{n}\sum_{j=1}^{m(n)}\alpha_j^{-2}) + \mathcal{O}_P \Big( \lambda_{m(n)}^{-2} \frac{1}{n} m(n)p(n) \sum_{j=1}^{m(n)}\alpha_j^{-2} \Big) \overset{n \to\infty}{\rightarrow} 0
\end{align*}
by Lemma 6.1 and 6.3 (\cite{PAP}). And finally, for the last part
\begin{align*}
\Vert \bar{\hat{\epsilon}}_{n}(m) \Vert^2  & \leq 2 \Vert \frac{1}{n-p} \sum_{r=p+1}^n \hat{\xi}_r \Vert^2 + 2\Vert \sum_{j=1}^{p} \hat{A}_{j,p}(m) \frac{1}{n-p} \sum_{r=p+1}^n \hat{\xi}_{r-j} \Vert^2 \\
& = \mathcal{O}_P\big(\frac{m(n)}{n-p(n)} + \frac{1}{n}\sum_{j=1}^{m(n)} \alpha_j^{-2}\big)  \overset{n \to\infty}{\rightarrow} 0  
\end{align*}
as in the proof of Lemma 6.5 (\cite{PAP}).  That $ \Vert \bar{\tilde{\epsilon}}_n(m) \Vert^2$, convergence to zero can be shown similarly. Thus, we get that $(\ref{c}) \rightarrow 0$ as $n \to\infty$. \\
Combining the results for (\ref{a}), (\ref{b}) and (\ref{c}), we achieve that $\Vert (2) \Vert \rightarrow 0$ independently of $k$ and thus it follows that $ D_{n,m}^{(1)}(k) \rightarrow 0$ in probability for arbitrary $1 \leq k\leq n$. The convergence of $D_{n,m}^{(2)}(k)$ can be shown in a similar way, which then proves the desired convergence of $D_{n,m}^\ast$. \\

\vspace*{0.2cm}

\noindent{\bf Proof of Lemma~\ref{le.lemma2}:} Recall that $ Z_{n,m}^+$ is based on $\xi^+_s$ and $\tilde{Z}_{n,m}$ on $\tilde{\xi}_s$, where
\begin{align*}
\xi_s^+ = 
 \sum_{j=0}^\infty \tilde{\Psi}_{j,p}(m) \epsilon_{s-j}^+ \ \ \mbox{and}  \ \ 
\tilde{\xi}_s  \
= \sum_{j=0}^\infty \Psi_j(m) \varepsilon_{s-j},
\end{align*}
with $m\times m $ coefficient matrices $\tilde{\Psi}_{j,p}$ and $\Psi_j$ in the power series expansion  of $\tilde{A}_{p,m}^{-1}(z)=(I_m-\sum_{j=1}^p \tilde{A}_{j,p}(m) z^j)^{-1} $ and $
A_m^{-1}(z)=(I_m-\sum_{j=1}^\infty   A_{j,p}(m) z^j)^{-1} $, $|z|\leq 1$,  respectively, and $\tilde{\Psi}_{0,p}=\Psi_0=I_m$.  We write
\begin{align} \label{eq.decD}
\xi_s^+-\tilde{\xi}_s & = \sum_{j=0}^{\infty} \big( \tilde{\Psi}_{j,p}(m) -\Psi_j(m)\big)\epsilon_{s-j}^+ 
 + \sum_{j=0}^{\infty} \Psi_j(m) \big( \epsilon_{s-j}^+ - \widetilde{\varepsilon}_{s-j}\big) \nonumber  \\
&\ \ \ \  + \sum_{j=0}^{\infty} \Psi_j(m) \big( \widetilde{\varepsilon}_{s-j} - \varepsilon_{s-j}\big)  
\end{align}
 where   $ \tilde{\varepsilon}_r =e_{I_r}$ is  a pseudo random variable generated by i.i.d. resampling from  
the  centered set of  $n-p$ random variables $ e_{p+1}, e_{p+2},  \ldots, e_n$, where $e_s=\xi_s-\sum_{j=1}^\infty A_{j}(m) \xi_{s-j}$,  also see  (\ref{eq.xi-VAR}).
Using (\ref{eq.decD}) and  $k=\floor*{nt}$, we get 
\begin{align}
\frac{1}{\sqrt{n}}\sum_{s=1}^{\floor*{nt}}\sum_{l=1}^m \big(\xi_{l,s}^+-\tilde{\xi}_{l,s}\big)v_l & = \frac{1}{\sqrt{n}}\sum_{s=1}^{k}\sum_{l=1}^m I_l\sum_{j=0}^\infty \big(  \tilde{\Psi}_{j,p}(m) -\Psi_j(m)\big)\epsilon_{s-j}^+v_l\nonumber \\
& \ \ \ \   + 
 \frac{1}{\sqrt{n}}\sum_{s=1}^{k}\sum_{l=1}^m I_l\sum_{j=0}^\infty \Psi_j(m)\big(\epsilon_{s-j}^+ -\tilde{\varepsilon}_{s-j}\big) v_l \nonumber \\
 & \ \ \ \ +  \frac{1}{\sqrt{n}}\sum_{s=1}^{k}\sum_{l=1}^m I_l\sum_{j=0}^\infty \Psi_j(m)\big(\tilde{\varepsilon}_{s-j} - \varepsilon_{s-j} \big) v_l \nonumber \\
& = \tilde{D}^{(1)}_{n,m} (k)+  \tilde{D}^{(2)}_{n,m}(k)  + \tilde{D}^{(3)}_{n,m}(k),
\end{align}
with an obvious notation for $ \tilde{D}^{(i)}_{n,m}(k)$, $i=1,2,3$. Then
\begin{align}
\E\|D_{n,m}^{(1)}(k)\|^2  & =  
 \frac{1}{n} \sum_{r,s=1}^{k} \sum_{j=1}^{m} \sum_{l=0}^\infty \I_j\Transp \tilde{\Psi}_l(m) \Sigma_{\epsilon,p}^+(m) \big( \tilde{\Psi}_{l+s-r,p}(m) -\Psi_{l+s-r,p}(m) \big)\Transp \I_j \label{L2.1} \\
&-  \frac{1}{n} \sum_{r,s=1}^{k} \sum_{j=1}^{m} \sum_{l=0}^\infty \I_j\Transp \Psi_l(m) \Sigma_{\epsilon,p}^+(m) \big( \tilde{\Psi}_{l+s-r,p}(m) -\Psi_{l+s-r,p}(m) \big)\Transp \I_j \label{L2.2} 
\end{align}
For  (\ref{L2.1}) we have by setting $ \Psi_{j+s}=\tilde{\Psi}_{j+s}=0$ for $ j+s<0$, that 
\begin{align*}
 & \Vert (\ref{L2.1}) \Vert_F  \leq \Vert\Sigma_{\epsilon}^+(m)\Vert_F \sum_{l=0}^\infty \Vert \sum_{j=1}^{m} \I_j \tilde{\Psi}_l(m) \Vert_F \\
& \ \ \ \ \times \frac{1}{n}\sum_{r,s=1}^k \Vert \sum_{j=1}^{m} \I_j\Transp \big( \tilde{\Psi}_{l+s-r,p}(m) - \Psi_{l+s-r}(m) \big) \Vert_F \\
& \leq \Vert\Sigma_{\epsilon}^+(\hspace{-1pt} m \hspace{-1pt})\Vert_F  \sum_{l=0}^\infty \Vert \sum_{j=1}^{m} \I^\top_j \tilde{\Psi}_l(\hspace{-1pt} m \hspace{-1pt}) \Vert_F 
\sum_{s=-k+1}^{k-1}\frac{k-|s|}{n}\|\sum_{l=1}^m \I_l^\top (\tilde{\Psi}_{j+s}(\hspace{-1pt} m \hspace{-1pt})- \Psi_{j+s}(\hspace{-1pt} m \hspace{-1pt}))\|_F\\
& \leq 2  \Vert\Sigma_{\epsilon}^+(m)\Vert_F  \sum_{l=0}^\infty \Vert \sum_{j=1}^{m} \I_j \tilde{\Psi}_l(m) \Vert_F \sum_{l=0}^\infty \Vert \sum_{j=1}^{m} \I_j\Transp \big( \tilde{\Psi}_{l}(m) - \Psi_{l}(m) \big) \Vert_F \\
&= \mathcal{O}_P(1)o_P(1),
\end{align*}
 by Lemma 6.1 and 6.5 of (\cite{PAP}). By the same arguments it follows that (\ref{L2.2}) is $o_P(1)$, too.  
 
 For  $ D_{n,m}^{(2)}(k)$ we have
 \begin{multline} \label{eq.D2}
 \E\|D_{n,m}^{(2)}(k)\|^2 \\
 = \frac{1}{n} \sum_{r,s=1}^{k} \sum_{j=1}^{m} \sum_{l=0}^\infty \I_j\Transp \Psi_l(m) \E\big(\epsilon_{r,p}^+(m)  -\tilde{\varepsilon}_r\big)\big(\epsilon_{r,p}^+(m) -\tilde{\varepsilon}_r\big)^\top   \Psi_{l+s-r}(m)\Transp \I_j  
 \end{multline} 
Observe that 
\begin{multline*}\E\big(\epsilon_{r,p}^+(m)  -\tilde{\varepsilon}_r\big)\big(\epsilon_{r,p}^+(m) -\tilde{\varepsilon}_r\big)^\top \\
  = \Sigma^+_{\epsilon}(m) - 2 \E[\epsilon_{r,p}^+(m)\tilde{\varepsilon}_r ^\top] + 
 \frac{1}{n-p} \sum_{t=p+1}^n (e_t-\bar{e}) (e_t-\bar{e})^\top.
 \end{multline*}
 Now
 \begin{align*}
\E[\epsilon_{r,p}^+(m)\tilde{\varepsilon}_r ^\top] & = \frac{1}{n-p} \sum_{t=p+1}^n (\tilde{\epsilon}_t -\bar{\tilde{\epsilon}} ) (e_t-\bar{e})^\top,
\end{align*}
and $ \|\bar{\tilde{\epsilon}}\|\stackrel{P}{\rightarrow} 0$,  $ \|\bar{e}\|\stackrel{P}{\rightarrow} 0$, as in the proof of Lemma~\ref{le.lemma1}, while,
\begin{align*}
\frac{1}{n-p} \sum_{t=p+1}^n \tilde{\epsilon}_t e_t^\top & =  \frac{1}{n-p}\sum_{t=p+1}^n e_te_t^\top+ \frac{1}{n-p} \sum_{t=p+1}^n\sum_{j=1}^p(\tilde{A}_{j,p}(m)-A_j(m))\xi_{t-j}e^\top_t\\
& \ \ \ \ + \frac{1}{n-p} \sum_{t=p+1}^n\sum_{j=p+1}^\infty A_j(m))\xi_{t-j}e^\top_t\\
& = E_{1,n} + E_{2,n} + E_{3,n},
\end{align*}
with an obvious notation for $E_{i,n}$, $i=1,2,3$. Observe that $ \|E_{1,n} - \Sigma_e(m)\|_F\stackrel{P}{\rightarrow} 0$, while
 \begin{align*}
 \|E_{2,n}\|  & \leq \frac{1}{n-p}\sum_{r=p+1}^n \sqrt{\sum_{j=1}^p \Vert \tilde{A}_{j,p}(m) -A_{j,p}(m) \Vert_F^2} \sqrt{\sum_{j=1}^p \Vert \xi_{r-j}e^\top_t\Vert^2} \\
 & \leq \mathcal{O}_{P}(m^{-3} p^{-3/2}) =o(1),\\
 \|E_{3,n}\| & \leq  \mathcal{O}(mp^{-1}\sum_{j=p+1}^\infty j \|A_{j}(m)\|_F =o(1),  
  \end{align*}
  as in the proof of Lemma 6.4 in \cite{PAP}. Hence 
  \[  \|\E\big(\epsilon_{r,p}^+(m)  -\tilde{\varepsilon}_r\big)\big(\epsilon_{r,p}^+(m) -\tilde{\varepsilon}_r\big)^\top \|_F =o_P(1),\]
from which we conclude  using   
 \begin{align*}
  \|(\ref{eq.D2})\Vert_F &  \leq  \Vert \E\big(\epsilon_{r,p}^+(m)  -\tilde{\varepsilon}_r\big)\big(\epsilon_{r,p}^+(m) -\tilde{\varepsilon}_r\big)^\top  \Vert_F 
 \Big( \underbrace{\sum_{l=0}^\infty \Vert \sum_{j=1}^{m} \I_j\Transp \Psi_l(m) \Vert}_{\leq \mathcal{O}_(1)} \Big)^2,
  \end{align*}
 by Lemma 6.1 and 6.5 of \cite{PAP}, that $ D_{n,m}^{(2)}(k)=o_P(1)$.
 
 Consider next  $D_{n,m}^{(3)}(k)$. We will show that there are copies $(u_j)_{j\in\mathbb{Z}}$ of $(\varepsilon_j)_{j\in\mathbb{Z}}$ and $(w_j)_{j\in\mathbb{Z}}$ of $(\tilde{\varepsilon})_{j\in\mathbb{Z}}$ , such that 
\begin{equation}\label{eq:d3copy}
\sup_{t\in[0,1]}\Big\|\frac{1}{\sqrt{n}}\sum_{s=1}^{[nt]}\sum_{l=1}^m \I_l^\top \sum_{j=0}^\infty \Psi_j(m)\big(w_{s-j} - u_{s-j} \big) v_l\Big\|\xrightarrow{P}0.
\end{equation} 
From this, the statement of the Lemma will follow.

For the construction of the copies   $(u_j)_{j\in\mathbb{Z}}$ and $(w_j)_{j\in\mathbb{Z}}$, we use Mallows metric $d_2$. Recall the definition of this  metric according to which,   for two random vectors $ X$ and $ Y$ with $ \E\|X\|^2<\infty$ and $ \E\|Y\|^2<\infty$, $d_2(X,Y) :=\inf \big\{\E\|X-Y\|^2\big\}^{1/2}$, where the infimum is taken over all pairs of random vectors $(W,U)$ with finite second moments, such that $ {\mathcal L}(W)={\mathcal L}(X)$ and ${\mathcal L}(U)={\mathcal L}(Y)$. Here and for a random vector 
$X$, $ {\mathcal L}(X)$  denotes the law of $X$.  We refer to  Bickel and Freedman \cite{bickel1981some}, Section 8,  for more details on the $d_2$ metric and its properties.  We also write for simplicity 
$d_2(X,Y)=d_2(F_X,F_Y)$, where $ F_X$ and $F_Y$ denote the distribution functions of $ X$ and $Y$, respectively. 
Now, on  a sufficiently rich probability space, let $ (w_t, u_t)$, $t \in{\mathbb Z}$,   be  i.i.d. random vectors  satisfying $ w_t \sim \hat G_{e}^{(m)}$ and $ u_t\sim G_e^{(m)}$ and  such  that $ d_2(w_1,u_1)=\sqrt{\E\|w_1-u_1\|^2}$ holds true. Here  $ \hat G_{e}^{(m)}$ denotes the empirical distribution function of the centered $n-p$ random variables $ e_{p+1}, e_{p+2},  \ldots, e_n$.  Observe  that $ d_2(w_1, u_1)=d_2(\hat G_{e}^{(m)}, G_e^{(m)})$.  We first establish that 
\begin{equation}
 d_2(\hat G_{e}^{(m)}, G_e^{(m)}) \rightarrow 0, \ \ \mbox{in probability.}
 \end{equation}
For this we introduce some additional notation in order to  make clear the dependence of the random variables considered on $m$. In particular, we write 
$\underline{u}(m)=(u_{1}(m), u_{2}(m), \ldots, u_{m}(m))^\top$
 for the $m$-dimensional vector  having distribution function $G_e^{(m)}$ and  $\underline{w}(m)=(w_{1}(m), w_{2}(m), \ldots, w_{m}(m))^\top$
 for the $m$-dimensional vector  having distribution function $\hat G_e^{(m)}$
 Notice that for any $m\in\mathbb{N} $ it holds true that 
 \[ 0\leq \sum_{j+1}^m \E(e_{j,t}(m))^2 \leq \sum_{j=1}^m \E(\xi_{j,t}^2) = \sum_{j=1}^m \lambda_j \leq \sum_{j=1}^\infty \lambda_j=C<\infty.\]
 This implies that for any $ \epsilon>0$,  $M_\epsilon \in \mathbb{N}$ exists, such that $ \sum_{j=M_\epsilon+1}^m \E(e_{j,t}(m))^2<\epsilon $  for all $ m>M_\epsilon$.  Recall that  $ m \rightarrow \infty$ as $ n\rightarrow \infty$ and let   $n$ be   large enough such that $ m>M_\epsilon$. We then have, keeping in mind that the infimum is taken overall pairs of random vectors $(\underline{w}(m), \underline{u}(m))$ such that $\underline{u}(m)$ and $ \underline{w}(m)$ have marginal distributions $ G_e^{(m)}$ and $ \hat G_{e}^{(m)}$, respectively, that 
 \begin{align} \label{eq.d2-dec}
 d_2(\hat G_{e}^{(m)}, G_e^{(m)}) & = \inf \big\{\E\| \underline{w}(m) - \underline{u}(m)\|^2\big\}^{1/2} \nonumber \\
 & = \inf \big\{ \sum_{j=1}^{M_\epsilon}\E (w_{j}(m)-u_{j}(m))^2 + \sum_{j=M_\epsilon+1}^m \E(w_{j}(m)-u_{j}(m))^2 )\big\}^{1/2}  \nonumber \\
 & \leq   \inf \big\{ \sum_{j=1}^{M_\epsilon}\E (w_{j}(m)-u_{j}(m))^2\big\}^{1/2} + \{2\sum_{j=M_\epsilon+1}^m \E(w_{j}(m))^2\}^{1/2} \nonumber  \\ 
 & \ \ \ \ \ \  \ \   + \{2\sum_{j=M_\epsilon+1}^m\E(u_{j}(m))^2\}^{1/2}, 
 \end{align}
where the infimum in the first term of (\ref{eq.d2-dec}) is taken overall pairs $(w(M_\epsilon), u(M_\epsilon) ) $ of $M_\epsilon$-dimensional random vectors such that   $ w(M_\epsilon)\sim \hat G_{e,M_\epsilon}^{(m)}$ and $u(M_\epsilon)\sim G_{e,M_\epsilon}^{(m)}$.
Notice that the last term of (\ref{eq.d2-dec})  is smaller than $ 2\epsilon$ while the term before the last one takes with  a probability approaching one as $n\rightarrow \infty$, a value which does not exceed   $2\epsilon$. To see this, observe that
$ \E(w_j(m))^2=(n-p)^{-1}\sum_{t=p+1}^nI_j^\top (e_{t}(m)-\overline{e})(e_{t}(m)-\overline{e})^\top I_j = \E(u_j(m))^2 + O_P((n-p)^{-1})$, that is,
\begin{equation*}
\sum_{j=M_\epsilon+1}^m \E(w_{j}(m))^2  = \sum_{j=M_\epsilon+1}^m \E(e_{j}(m))^2 +\mathcal{O}_P(m/(n-p)^{-1}) \leq   \epsilon + o_P(1).
\end{equation*}
 The first term of (\ref{eq.d2-dec}) equals
$ d_2( \hat G_{e,M_\epsilon}^{(m)},G_{e,M_\epsilon}^{(m)})$, which can be bounded by 
\[ d_2( \hat G_{e,M_\epsilon}^{(m)},G_{e,M_\epsilon}^{(m)}) \leq d_2( \hat G_{e,M_\epsilon}^{(m)},G_{e,M_\epsilon}) + d_2( G_{e,M_\epsilon}^{(m)},G_{e,M_\epsilon}).\]
By Assumption~\ref{ass1}(iii),  $  G_{e,M_\epsilon}^{(m)} -G_{e,M_\epsilon} \rightarrow 0$ as $n\rightarrow \infty$ .  Furthermore,   $\hat G_{e,M_\epsilon}^{(m)} - G_{e,M_\epsilon} \rightarrow 0$, in probability.
This holds true since 
\begin{align*}
\big| \E(\hat G_{e,M_\epsilon}^{(m)}(x)) & -G_{e,M_\epsilon}(x)\big|  = \big|G_{e,M_\epsilon}^{(m)}(x+\overline{e}) -G_{e,M_\epsilon}(x)\big| \\
& \leq \big|G_{e,M_\epsilon}^{(m)}(x+\overline{e}) -G_{e,M_\epsilon}(x+\overline{e})\big| + \big|G_{e,M_\epsilon}(x+\overline{e}) -G_{e,M_\epsilon}(x)\big| \\
& \leq  \sup_{x} \big| G_{e,M_\epsilon}^{(m)}(x) -G_{e,M_\epsilon}(x)\big|  +  \big|G_{e,M_\epsilon}(x+\overline{e}) -G_{e,M_\epsilon}(x)\big| \rightarrow 0,
\end{align*}
 by the assumed continuity of  $G_{e,M_\epsilon}$ and the fact that $\|\overline{e}\| \rightarrow 0$, in probability.
 Also,  ${\rm Var}( \hat G_{e,M_\epsilon}^{(m)}(x)))\leq 1/(4n) \rightarrow 0$, which shows that     $\hat G_{e,M_\epsilon}^{(m)} - G_{e,M_\epsilon} \rightarrow 0$, in probability.
 For the second moments of $w(M_\epsilon) $ and $  u(M_\epsilon)  $ we have
 \[ \E w(M_\epsilon)w(M_\epsilon)^\top = E_{M_\epsilon} \Big(\frac{1}{n-p} \sum_{t=p+1}^n (e_t(m) -\bar{e} ) (e_t-(m)\bar{e})^\top \Big) E^\top_{M_\epsilon}  \]
 and
 \[ \E u(M_\epsilon)u(M_\epsilon)^\top  = E_{M_\epsilon}\Sigma_e(m) E_{M_\epsilon}^\top,\]
 where $ E_{M_\epsilon} $ is the  $M_\epsilon \times m$ matrix   $ E_{M_\epsilon} =\big(I_{M_\epsilon}, 0_{M_\epsilon \times m}\big)$ with $ I_{M_\epsilon}$ the $M_\epsilon \times M_\epsilon$ unit matrix and $ 0_{M_\epsilon \times m}$ a $ M_\epsilon \times m$ matrix of zeros. Then, 
\begin{multline*}
\| \E w(M_\epsilon)w(M_\epsilon)^\top-\E u(M_\epsilon)u(M_\epsilon)^\top\|_F \\
\leq C \big\| \frac{1}{n-p} \sum_{t=p+1}^n (e_t -\bar{e} ) (e_t-\bar{e})^\top - \Sigma_e(m)\big\|_F \stackrel{P}{\rightarrow} 0.
\end{multline*}
Therefore    by Lemma 8.3 of Bickel and Freedman  \cite{bickel1981some},   we conclude  that  
\[ d_2( \hat G_{e,M_\epsilon}^{(m)},G_{e,M_\epsilon}^{(m)})) \rightarrow 0, \ \ \mbox{in probability},\]
that means we can define copies  $(u_j)_{j\in\mathbb{Z}}$ of $(\varepsilon_j)_{j\in\mathbb{Z}}$ and $(w_j)_{j\in\mathbb{Z}}$ of $(\tilde{\varepsilon}_{j})_{j\in\mathbb{Z}}$ with $\E[\|u_1-w_1\|^2]\rightarrow 0$.

  Consider next \eqref{eq:d3copy}. For every $u\in[0,1]$, we consider the sequence
  \begin{equation*}
  \sum_{l=1}^m \I_l^\top \sum_{j=0}^\infty \Psi_j(m) (w_{s-j} -u_{s-j})v_l(u), \ \ \ s=1,...,n
  \end{equation*}
 of real-valued random variables and will apply Theorem 1 of \cite{wu2007strong}. For this, we consider the filtration $(\mathcal{F}_n)_{n\in\mathbb{N}}$ with $\mathcal{F}_n=\sigma((u_k,w_k)_{k\leq n})$ (the sigma algebra generated by $(u_k,w_k)_{k\leq n}$). Now
\begin{multline*}
\theta_{n,2}=\Big\|\E\big[\sum_{l=1}^m \I_l^\top \sum_{j=0}^\infty \Psi_j(m) (w_{n-j} -u_{n-j})v_l(u)\big|\mathcal{F}_0\big]\\
-\E\big[\sum_{l=1}^m \I_l^\top \sum_{j=0}^\infty \Psi_j(m) (w_{n-j} -u_{n-j})v_l(u)\big|\mathcal{F}_{-1}\big]\Big\|_2\\
=\Big\|\sum_{l=1}^m \I_l^\top\Psi_n(m) (w_{0} -u_{0})v_l(u)\Big\|_2\leq \sum_{l=1}^m\big\| \I_l^\top\Psi_n(m)\big\|_F \sqrt{\E\|w_{0} -u_{0}\|^2}
\end{multline*}
and consequently $\sum_{n=0}^\infty \theta_{n,2} \leq C\sqrt{\E\|w_{0} -u_{0}\|^2]}$. With Theorem 1 of \cite{wu2007strong}, we have
\begin{multline*}
\E\Big[\sup_{t\in[0,1]}\Big\|\frac{1}{\sqrt{n}}\sum_{s=1}^{[nt]}\sum_{l=1}^m \I_l\sum_{j=0}^\infty \Psi_j(m)\big(w_{s-j} - u_{s-j} \big) v_l\Big\|^2\Big]\\
\leq  \int_0^1 \E\Big[\sup_{t\in[0,1]}\Big\|\frac{1}{\sqrt{n}}\sum_{s=1}^{[nt]}\sum_{l=1}^m \I_l\sum_{j=0}^\infty \Psi_j(m)\big(w_{s-j} - u_{s-j} \big) v_l(u)\Big\|^2\Big]du\\
\leq C\E\|w_{0} -u_{0}\|^2\rightarrow 0
\end{multline*}
This completes the proof.

\vspace*{0.2cm}

\noindent{\bf Proof of Lemma~\ref{le.lemma3}:}  We write 
\begin{align*}
Z_n^\circ(t) - \tilde{Z}_{n,m}(t) = \frac{1}{\sqrt{n}} \sum_{s=1}^{\floor*{nt}} \sum_{l=1}^\infty \xi_{l,s}^\circ v_l - \frac{1}{\sqrt{n}} \sum_{s=1}^{\floor*{nt}}\sum_{l=1}^m \tilde{\xi}_{l,s} v_l = \frac{1}{\sqrt{n}} \sum_{s=1}^{\floor*{nt}} \sum_{l=m+1}^\infty \xi_{l,s}^\circ v_l
\end{align*}
Note that $\text{Var}(\xi_{l,s}^\circ)=\lambda_l  \rightarrow 0$
as $l \rightarrow \infty$, where $\lambda_l$ is the $l$-th largest eigenvalue of $C_0=\E[X_t \otimes X_t]$ . Using Markov's inequality we have
\begin{multline*}
\text{P}(\sup\limits_{t\in[0,1]} \Vert Z_n^\circ(t) - \tilde{Z}_{n,m}(t) \Vert > \varepsilon)  \leq \frac{1}{\epsilon^4}\E\big(\sup_{t\in[0,1]}\| \frac{1}{\sqrt{n}} \sum_{s=1}^{\floor*{nt}} \sum_{l=m+1}^\infty \xi_{l,s}^\circ v_l\|\big)^4\displaybreak[0]\\
= \frac{1}{\epsilon^4}\E\big(\max_{1\leq k\leq n}\| \frac{1}{\sqrt{n}} \sum_{s=1}^{k} \sum_{l=m+1}^\infty \xi_{l,s}^\circ v_l\|\big)^4 = \frac{1}{\epsilon^4}\E\big(\max_{1\leq k\leq n}\| \frac{1}{\sqrt{n}} \sum_{s=1}^{k} \sum_{l=m+1}^\infty \xi_{l,s}^\circ v_l\|^4\big)
\end{multline*} 
We apply Theorem~\ref{ThmB}. Using the notation $ \gamma_l(h)=Cov(\xi^\circ_{l,0},\xi^\circ_{l,h})$ and $\gamma_{l_1,l_2}(h)=Cov(\xi^\circ_{l_1,0},\xi^\circ_{l_2,h}) $, we have that 
\begin{align*}
\frac{1}{n^2} \E\Vert \sum_{s=a+1}^{a+n} \sum_{l=m+1}^\infty \xi_{l,s}^\circ v_l\Vert^4  &= \frac{1}{n^2}\sum_{s_1, \ldots, s_4=a+1}^{a+n}\sum_{l_1,\ldots, l_4=m+1}^\infty \langle v_{l_1},v_{l_2}\rangle \langle v_{l_3},v_{l_4}\rangle\\
& \ \ \ \  \times \E\big(\xi_{l_1,s_1}^\circ \xi_{l_2,s_2}^\circ  \xi_{l_3,s_3}^\circ \xi_{l_4,s_4}^\circ\big)\displaybreak[0] \\
& \leq \frac{1}{n^2} \sum_{s_1, \ldots, s_4=a+1}^{a+n}\sum_{l_1, l_2=m+1}^\infty \Big\{ | \gamma_{l_1}(s_2-s_1) || \gamma_{l_2}(s_4-s_3)| \\
& \ \ \ \ +  | \gamma_{l_1,l_2}(s_3-s_1) || \gamma_{l_1,l_2}(s_4-s_2)|  \\
& \ \ \ \  + | \gamma_{l_1,l_2}(s_4-s_1) | | \gamma_{l_1,l_2}(s_3-s_2)|\\
& \ \ \ \ + |cum(\xi^\circ_{l_1,s_1}, \xi^\circ_{l_1,s_2}, \xi^\circ_{l_2,s_3},\xi^\circ_{l_2,s_4}) | \Big\}\\
& = S_{1,n} + S_{2,n} + S_{3,n} + S_{4,n},
\end{align*}
with an obvious notation for $ S_{i,n}$, $i=1,\ldots, 4$. Using
\[ \sum_{s_1,s_2=a+1}^{a+n}  | \gamma_{l_1,l_2}(s_1-s_2)| = \sum_{h=-n+1}^{n-1} (n-|h|) |\gamma_{l_1,l_2}(h) |,\]
we get that  $ n^{-2} E\Vert \sum_{s=a+1}^{a+n} \sum_{l=m+1}^\infty \xi_{l,s}^\circ v_l\Vert^4 $ is for any $ a \in \mathbb{N} $ bounded by   
\begin{align} \label{eq.g2}
 g^2(n) := &  \Big(\sum_{l=m+1}^\infty \sum_{h=-n+1}^{n-1} (1-|h|/n)|\gamma_l(h)|\Big)^2  \nonumber \\
 & \ \ \ +  2\sum_{l_1,l_2=m+1}^\infty \Big(\sum_{h=-n+1}^{n-1} (1-|h|/n)|\gamma_{l_1,l_2}(h)|\Big)^2  \nonumber \\
 & \ \ \  + \frac{1}{n^2} \sum_{l_1,l_2=m+1}^\infty \sum_{s_4=a+1}^{a+n} \sum_{s_1,s_2,s_3=a+1-s_4}^{a+n-s_4} | cum_{l_1,l_1,l_2,l_2}(s_1,s_2,s_3)|,
 \end{align}
where $ cum_{l_1,l_1,l_2,l_2}(s_1,s_2,s_3)=cum(\xi^\circ_{l_1,0},\xi^\circ_{l_1,s_1},\xi^\circ_{l_2,s_3},\xi^\circ_{l_2,s_4})$.  Notice that  
  for any   $m\in {\mathbb N}$,   by  
\begin{equation} \label{eq.Boundg2} 
g^2(n)  \leq     \Big(\sum_{l=m+1}^\infty \sum_{h\in{\mathbb Z} }  |\gamma_l(h)|\Big)^2   +   2\sum_{l_1,l_2=m+1}^\infty \Big(\sum_{h\in {\mathbb Z}}|\gamma_{l_1,l_2}(h)|\Big)^2 + C n^{-1},
\end{equation}
where  $C = \sum_{l_1,l_2=m+1}^\infty  \sum_{s_1,s_2,s_3 \in {\mathbb Z}} | cum_{l_1,l_1,l_2,l_2}(s_1,s_2,s_3)| <\infty$ and, therefore, $g(n)$  satisfies the conditions of Theorem~\ref{ThmB}. 
By the same theorem we then have for a constant $K>0$, that 
\[ \E\big(\max_{1\leq k\leq n}\| n^{-1/2} \sum_{s=1}^{k} \sum_{l=m+1}^\infty \xi_{l,s}^\circ v_l\|^4\big)    \leq  K g^2(n) \rightarrow 0,\]
as $ n\rightarrow \infty$ because $\lim_{n\rightarrow\infty} \sum_{l=m+1}^\infty \sum_{h\in{\mathbb Z} }  |\gamma_l(h)| =0$ and\\
 $\lim_{n\rightarrow \infty} \sum_{l_1,l_2=m+1}^\infty \Big(\sum_{h\in {\mathbb Z}}|\gamma_{l_1,l_2}(h)|\Big)^2 = 0$.

\vspace*{0.2cm}

\noindent{\bf Proof of Lemma~\ref{le.lemma4}:} \ 
Recall the definition of $Z_n^\circ$:
\[Z_n^\circ(t) = \frac{1}{\sqrt{n}} \sum_{s=1}^{\floor*{nt}} \sum_{l=1}^\infty \xi_{l,s}^\circ v_l = \sum_{l=1}^\infty \frac{1}{\sqrt{n}} \sum_{s=1}^{\floor*{nt}}\xi_{l,s}^\circ v_l  \]
and define
\[Z_{n,m}^\circ(t):= \sum_{l=1}^m \frac{1}{\sqrt{n}} \sum_{s=1}^{\floor*{nt}}\xi_{l,s}^\circ v_l  \]
By Theorem 3.2 of \cite{BIL} $(Z_n^\circ(t))_{t \in [0,1]} \Rightarrow W$ holds true if we show that
\begin{itemize}
\item[I)] $(Z_{n,L}^\circ(t))_{t \in [0,1]} \Rightarrow W_L$ as $n\to\infty$ for any $L \in \mathbb{N}$ fixed. Here, $W_L$ is a Brownian Motion in $H$ with covariance operator $C_{\omega,L}$, s.t.
\[\langle C_{\omega,L}x,y\rangle = 2\pi \sum_{r=1}^L \sum_{s=1}^L f_{r,s}(0) \langle v_r,x\rangle\langle v_s,y\rangle    \]
\item[II)] $W_L \Rightarrow W$ as $L\to\infty$
\item[III)] \[\lim\limits_{L\to\infty}\limsup\limits_{n \to \infty} \Prob(\sup\limits_{t\in[0,1]} \vert Z_{n,L}^\circ(t) -Z_n^\circ(t) \vert > \varepsilon ) = 0 \;\;\; \forall \varepsilon >0. \]
\end{itemize}

\underline{I):}
Recall that the first $L$ components of $\{\xi^\circ_s\}$ equal the $L$-dimensional process $\{\tilde{\xi}_s(L)\}$. Use $ \tilde{\xi}_s$ for $ \tilde{\xi}_s(L)$ in the following. Rewrite   $Z_{n,L}^\circ$ in the following way:
\begin{align*}
Z_{n,L}^\circ(t) = \sum_{l=1}^L \frac{1}{\sqrt{n}} \sum_{s=1}^{\floor*{nt}} \xi_{l,s}^\circ v_l = \sum_{l=1}^L \I_l^{\Transp}\underbrace{ \frac{1}{\sqrt{n}} \sum_{s=1}^{\floor*{nt}} \tilde{\xi}_{s}}_{=: L_n(t)} v_l = \sum_{l=1}^L\I_l^{\Transp} L_n(t) v_l
\end{align*}
with $\I_l,\; \tilde{\xi}_s,  \;L_n(t) \in \mathbb{R}^L$. Recall  that
\begin{align*}
\tilde{\xi}_s = \sum_{j=1}^\infty \Psi_j(L)\varepsilon_{s-j} + \varepsilon_s = \sum_{j=1}^\infty A_j(L) \tilde{\xi}_{s-j} + \varepsilon_s
\end{align*}
We  show that
\begin{align}
(L_n(t))_{t \in [0,1]} \Rightarrow B_L  \label{Plus}
\end{align}
where $B_L$ is a Brownian Motion in $\mathbb{R}^L$ with covariance matrix $\Gamma_L$, $\Gamma_L = 2\pi (f_{r,s}(0))_{r,s=1,...,L}$. For this, we use Theorem A.1 of \cite{AUE}. According to this theorem, (\ref{Plus}) holds true if 
\[\sum_{r\geq 1} \big(\E[\Vert \tilde{\xi}_s - \tilde{\xi}_s^{(r)} \Vert^2]\big)^{1/2} < \infty  \]
where $\tilde{\xi}_s^{(r)} = \sum_{j=1}^r \Psi_j(L) \varepsilon_{s-j}+ \varepsilon_s$, is a truncated version of $\tilde{\xi}_s$.
We have
\begin{align*}
\big( \E\Vert \tilde{\xi}_s -\tilde{\xi}_s^{(r)} \Vert ^2  \big)^{1/2} = \Big( \E\Vert \sum_{j=1}^\infty \Psi_j(L) \varepsilon_{s-j} - \sum_{j=1}^r \Psi_j(L) \varepsilon_{s-j} \Vert^2 \Big)^{1/2} \\
= \Big( \E\Vert \sum_{j=r+1}^\infty \Psi_j(L) \varepsilon_{s-j} \Vert^2\Big)^{1/2} \leq \sum_{j=r+1}^\infty \big( \E[\Vert \Psi_j(L) \varepsilon_{s-j}\Vert^2\big)^{1/2} \\
= \sum_{j=r+1}^\infty \Vert \Psi_j(L)\Vert_F \big( \E\Vert \varepsilon_{s-j}\Vert^2 \big)^{1/2} = \Vert \Sigma_\varepsilon(L)\Vert_F \sum_{j=r+1}^\infty \Vert \Psi_j(L) \Vert_F,
\end{align*} 
where $\Sigma_\varepsilon(L) = \E[\varepsilon_s \varepsilon_s^{\Transp}]$. Thus
\begin{align*}
\sum_{r \geq 1} \big( \E\Vert \tilde{\xi}_s - \tilde{\xi}_s^{(r)} \Vert^2\big)^{1/2} \leq \Vert \Sigma_\varepsilon (L) \Vert_F \sum_{r\geq 1} \sum_{j=r+1}^\infty \Vert \Psi_j(L)\Vert_F \\
\leq \Vert \Sigma_\varepsilon(L)\Vert_F \sum_{j=1}^\infty j \Vert \Psi_j(L) \Vert_F < \infty
\end{align*}
by Lemma 6.1 of \cite{PAP}. Thus we get  $(L_n(t))_{t \in[0,1]} \Rightarrow B_L$ and 
\[ (\sum_{l=1}^L \I_l^{\Transp} L_n(t) v_l) \Rightarrow W_L\]
with a Brownian motion $W_L$ that has the covariance operator $\langle W_L(x) , y \rangle = 2\pi \sum_{r=1}^L \sum_{s=1}^L f_{r,s}(0) \langle v_r,x\rangle \langle v_s,y \rangle$. \\

\underline{II):}
We have that 
\begin{align*}
& \Vert \sum_{r=1}^L \sum_{s=1}^L f_{r,s}(0) \langle v_r,x\rangle \langle v_s,y \rangle - \sum_{r=1}^\infty \sum_{s=1}^\infty f_{r,s}(0) \langle v_r,x\rangle \langle v_s,y \rangle \Vert_{HS} \\
& \leq \Vert \sum_{r=1}^L \sum_{s=1}^\infty f_{r,s}(0) \langle v_r,x\rangle \langle v_s,y \rangle - \sum_{r=1}^\infty \sum_{s=1}^\infty f_{r,s}(0) \langle v_r,x\rangle \langle v_s,y \rangle \Vert_{HS} \\
& + \Vert \sum_{r=1}^\infty \sum_{s=1}^L f_{r,s}(0) \langle v_r,x\rangle \langle v_s,y \rangle - \sum_{r=1}^\infty \sum_{s=1}^\infty f_{r,s}(0) \langle v_r,x\rangle \langle v_s,y \rangle \Vert_{HS} \\
& + \Vert \sum_{r=m+1}^\infty \sum_{s=L+1}^\infty f_{r,s}(0) \langle v_r,x\rangle \langle v_s,y \rangle - \sum_{r=1}^\infty \sum_{s=1}^\infty f_{r,s}(0) \langle v_r,x\rangle \langle v_s,y \rangle \Vert_{HS}  \overset{L\to\infty}{\rightarrow} 0;
\end{align*}
see the last step in the proof of Prop. 3.2 (\cite{PAP}).

\underline{III):} By Markov's inequality  it suffices to show that 
\[\lim\limits_{L \to\infty} \limsup\limits_{n\to\infty} \E[\big( \sup\limits_{t\in[0,1]} \Vert Z_{n,L}^\circ(t) - Z_n^\circ(t) \Vert \big)^4] = 0.\]
For this we argue as in the proof of Lemma~\ref{le.lemma3} and get the bound
\[  \E[\big( \sup\limits_{t\in[0,1]} \Vert Z_{n,L}^\circ(t) - Z_n^\circ(t) \Vert \big)^4] \leq \E\big(\max_{1\leq k\leq n}\| \frac{1}{\sqrt{n}} \sum_{s=1}^{k} \sum_{l=L+1}^\infty \xi_{l,s}^\circ v_l\|^4\big)\leq Kg^2_L(n),\]
where, similarly to   (\ref{eq.g2}), the function $g^2_L(n)$ is given here by,
\begin{align*} \label{eq.g2-2}
 g^2(n) := &  \Big(\sum_{l=L+1}^\infty \sum_{h=-n+1}^{n-1} (1-|h|/n)|\gamma_l(h)|\Big)^2  \nonumber \\
 & \ \ \ +  2\sum_{l_1,l_2=L+1}^\infty \Big(\sum_{h=-n+1}^{n-1} (1-|h|/n)|\gamma_{l_1,l_2}(h)|\Big)^2  \nonumber \\
 & \ \ \  + \frac{1}{n^2} \sum_{l_1,l_2=L+1}^\infty \sum_{s_4=a+1}^{a+n} \sum_{s_1,s_2,s_3=a+1-s_4}^{a+n-s_4} | cum_{l_1,l_1,l_2,l_2}(s_1,s_2,s_3)|,
 \end{align*}
Since
\[  \lim_{n\rightarrow\infty} g^2_L(n) =  \Big(\sum_{l=L+1}^\infty \sum_{h\in{\mathbb Z} }  |\gamma_l(h)|\Big)^2   +   2\sum_{l_1,l_2=L+1}^\infty \Big(\sum_{h\in {\mathbb Z}}|\gamma_{l_1,l_2}(h)|\Big)^2,\]
and  the limit on the  right hand side  above goes to zero as $ L\rightarrow \infty$, the proof of Lemma 5.4 is complete.

\vspace*{0.2cm}

\noindent {\bf Proof of Theorem~\ref{thm:altloc} :}
Before we start with the proof, let's introduce some notation: As only $Y_1,...,Y_n$ are observed (not $X_1,...,X_n$), estimates have to be based on this observations. To make this clear, we write
\begin{itemize}
\item $\hat{C}_{0,Y}$ for the sample covariance operator based on $Y_1,...,Y_n$
\item $\hat{v}_{j,Y}$ and $\hat{\lambda}_{j,Y}$ for its eigenvectors and eigenvalues
\item $\hat{\xi}_{t,Y}$ for score vectors with $\hat{\xi}_{j,t,Y}=\langle Y_t, \hat{v}_{j,Y}\rangle$
\item $\hat{A}_{j,m,Y}$ estimated autoregressive matrices based on $Y_1,...,Y_n$
\end{itemize} 
and so on. However, as the distribution of $Y_i$, $i=1,..,n$ is changing with $n$, we still use the original notation for true quantities related to the distribution of $X_1,...,X_n$:
\begin{itemize}
\item $C_0$: covariance operator of $X_1$
\item $v_j$ and $\lambda_j$: eigenvectors and eigenvalues of $C_0$
\item $\xi_{t,Y}$: score vectors with $\xi_{j,t,Y}=\langle Y_t,v_j
\rangle$
\item $\xi_{t}$: score vectors with $\xi_{j,t}=\langle X_t,v_j
\rangle$
\item $A_{j,m }$ autoregressive matrices for process $(\xi_{t})_{t\in\mathbb{N}}$
\end{itemize}

\vspace*{0.2cm}

\begin{lem}\label{lem:covalt} Under the assumtions of Theorem \ref{thm:altloc} , we have
\begin{equation*}
\E\left\|\hat{C}_{0,Y}-\hat{C}_{0,X}\right\|^2_{HS}=O\big(n^{-\min\{4r, 1+2r\}}\big)
\end{equation*}

\end{lem}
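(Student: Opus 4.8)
The plan is to split $\hat{C}_{0,Y}-\hat{C}_{0,X}$ into a ``pure mean-shift'' part and a ``cross'' part and to estimate the two in Hilbert--Schmidt norm separately. Write $\delta_t=n^{-r}\mu\,\mathbf{1}\{t>k^\ast\}$, so that $Y_t=X_t+\delta_t$ and $Y_t-\overline{Y}=(X_t-\overline{X})+(\delta_t-\overline{\delta})$ with $\overline{\delta}=n^{-r}c_n\mu$ and $c_n=(n-k^\ast)/n$. Expanding the tensor products in $\hat{C}_{0,Y}=n^{-1}\sum_{t}(Y_t-\overline{Y})\otimes(Y_t-\overline{Y})$ and cancelling the common $\hat{C}_{0,X}$ term gives
\[
\hat{C}_{0,Y}-\hat{C}_{0,X}=A_n+A_n^{\ast}+B_n,\qquad A_n=\frac1n\sum_{t=1}^{n}(X_t-\overline{X})\otimes(\delta_t-\overline{\delta}),\quad B_n=\frac1n\sum_{t=1}^{n}(\delta_t-\overline{\delta})\otimes(\delta_t-\overline{\delta}),
\]
where $A_n^{\ast}$ is the adjoint of $A_n$, so that $\|A_n^{\ast}\|_{HS}=\|A_n\|_{HS}$. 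By the triangle inequality it then suffices to bound $\E\|B_n\|_{HS}^2$ and $\E\|A_n\|_{HS}^2$.

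For $B_n$, since $\delta_t-\overline{\delta}=n^{-r}(\mathbf{1}\{t>k^\ast\}-c_n)\mu$, one gets $B_n=n^{-2r}\big(n^{-1}\sum_{t=1}^n(\mathbf{1}\{t>k^\ast\}-c_n)^2\big)(\mu\otimes\mu)$, and the scalar factor equals $k^\ast(n-k^\ast)/n^2\le\tfrac14$. Hence $\|B_n\|_{HS}\le\tfrac14 n^{-2r}\|\mu\|^2$ holds deterministically and $\E\|B_n\|_{HS}^2=O(n^{-4r})$.

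For $A_n$, the key observation is that $\sum_{t=1}^n(\mathbf{1}\{t>k^\ast\}-c_n)=0$, so the centerings $\overline{X}$ and $\E X_0$ drop out and
\[
A_n=n^{-r}\,S_n\otimes\mu,\qquad S_n=\frac1n\sum_{t=1}^n(\mathbf{1}\{t>k^\ast\}-c_n)(X_t-\E X_0)=\frac1n\sum_{t=k^\ast+1}^n(X_t-\E X_0)-c_n(\overline{X}-\E X_0).
\]
Using $\E\langle X_i-\E X_0,X_j-\E X_0\rangle=\mathrm{tr}(\mathcal{C}_{j-i})$ and $\sum_{h}\|\mathcal{C}_h\|_N<\infty$ from Assumption~\ref{ass1}(i), one has $\E\|\sum_{t=a+1}^{a+k}(X_t-\E X_0)\|^2\le Ck$ uniformly in $a$, whence $\E\|S_n\|^2\le C/n$ and therefore $\E\|A_n\|_{HS}^2=n^{-2r}\|\mu\|^2\,\E\|S_n\|^2=O(n^{-1-2r})$, and the same for $A_n^{\ast}$. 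Combining the three bounds gives $\E\|\hat{C}_{0,Y}-\hat{C}_{0,X}\|_{HS}^2=O(n^{-4r})+O(n^{-1-2r})=O(n^{-\min\{4r,\,1+2r\}})$.

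The expansion and the scalar computations are routine; the one step that needs care is the cross term $A_n$, where obtaining the sharp exponent $1+2r$ (rather than merely $2r$) requires exploiting the exact cancellation $\sum_t(\mathbf{1}\{t>k^\ast\}-c_n)=0$ to rewrite $S_n$ as a normalized partial sum of the centered stationary process, and then invoking the short-range dependence (summability of the autocovariance operators) to gain the extra factor $n^{-1}$. This is the main obstacle, and it is exactly what makes the bound useful downstream: under the standing hypothesis $r>1/4$ of Theorem~\ref{thm:altloc} the exponent $\min\{4r,1+2r\}$ is strictly positive, so $\hat{C}_{0,Y}$ still consistently estimates the covariance operator along the local alternatives \eqref{eq.LocAlt}.
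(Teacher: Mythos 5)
Your proof is correct and follows essentially the same route as the paper: the identical decomposition into the deterministic shift term (bounded by $O(n^{-2r})$ in norm, hence $O(n^{-4r})$ squared) and the two cross terms, with the cancellation $\sum_t(\mathbf{1}\{t>k^\ast\}-c_n)=0$ used to reduce the cross term to a normalized partial sum of the centered process whose second moment is $O(n)$ by summability of the autocovariance operators, yielding $O(n^{-1-2r})$. Your write-up is in fact slightly more careful than the paper's (explicit centering by $\E X_0$ and the exact scalar $k^\ast(n-k^\ast)/n^2$), but there is no substantive difference in approach.
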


\begin{proof} A short calculation gives $Y_i-\bar{Y}_n=X_i-\bar{X}_n+c_{i,n}$ with $c_{i,n}=-\frac{n-k^\ast}{n^{1+r}}$ for $i\leq k^\ast$ and $c_{i,n}=-\frac{k^\ast}{n^{1+r}}$ for $i> k^\ast$. So we can conclude that
\begin{multline*}
\hat{C}_{0,Y}-\hat{C}_{0,X}=\frac{1}{n}\sum_{i=1}^n(Y_i-\bar{Y}_n)\otimes (Y_i-\bar{Y}_n)-\frac{1}{n}\sum_{i=1}^n(X_i-\bar{X}_n)\otimes (X_i-\bar{X}_n)\\
=\frac{1}{n}\sum_{i=1}^n(X_i-\bar{X}_n)\otimes c_{i,n}\mu+\frac{1}{n}\sum_{i=1}^n c_{i,n }\mu\otimes (X_i-\bar{X}_n)+\frac{1}{n}\sum_{i=1}^n c_{i,n}\mu\otimes c_{i,n}\mu
\end{multline*}
The last summand is deterministic and of order $O(n^{-2r})$, as $|c_{i,n}|\leq n^{-r}$. For the first summand, we have
\begin{equation*}
\frac{1}{n}\sum_{i=1}^n(X_i-\bar{X}_n)\otimes c_{i,n}\mu=\frac{n-k^\ast}{n^{2+r}}\Big(\sum_{i=1}^{k^\ast}X_i\Big)\otimes \mu+\frac{k^\ast}{n^{2+r}}\Big(\sum_{i=k^\ast+1}^{n}X_i\Big)\otimes \mu
\end{equation*}
so
\begin{multline*}
\E\Big\|\frac{1}{n}\sum_{i=1}^n(X_i-\bar{X}_n)\otimes c_{i,n}\mu\Big\|^2_{HS} \\
\leq \frac{C}{n^{2+2r}}\E\Big\|\sum_{i=1}^{k^\ast}X_i\Big\|^2+\frac{C}{n^{2+2r}}\E\Big\|\sum_{i=k^\ast+1}^{n}X_i\Big\|^2=O(n^{1+2c})
\end{multline*}
as $\E[\|\sum_{i=1}^{n}X_i\|^2]=O(n)$. The second summand can be treated in the same way.
\end{proof}

\noindent{\bf Proof of Theorem \ref{thm:altloc}} The statement of the theorem can be proved along the lines of Theorem \ref{th.boot}. We have to check that Lemmas  6.3, 6.5, 6.6, 6.7, 6.8 of \cite{PAP} which are used in the proof still hold.

The proof of Lemma 6.3 of \cite{PAP} is based \cite{hoerkok09}. First note that  Theorem 3.1 of \cite{hoerkok09} still holds, because by this Theorem applied to $X_1,X_2,...$ and by our Lemma \ref{lem:covalt}, we have
\begin{equation*}
\E\big\|\hat{C}_{0,Y}-C_0\big\|^2\leq 2\E\big\|\hat{C}_{0,Y}-\hat{C}_{0,X}\big\|^2+2\E\big\|\hat{C}_{0,X}-C_0\big\|^2=O\Big(\frac{1}{n}\Big).
\end{equation*}
Using Lemmas 3.1 and 3.2,  Lemma 6.3 of \cite{PAP} follows for the estimators $\hat{A}_{j,m,Y}$ the same way as before.

For Lemma 6.5 of \cite{PAP}, only parts (iii) and (iv) have to be generalized. Part (iii) does still hold because Lemma 6.3 does still hold.  For part (iv), we have to bound $\frac{1}{n-p}\sum_{t=p+1}^n\| \hat{\xi}_{t,Y}-\xi_{t}\|$ and $\frac{1}{n-p}\sum_{t=p+1}^n\| \hat{\xi}_{t-p,Y}-\xi_{t-p}\|$. We will only treat the first sum in detail
\begin{align*}
&\frac{1}{n-p}\sum_{t=p+1}^n\| \hat{\xi}_{t-p,Y}-\xi_{t-p}\|^2=\frac{1}{n-p}\sum_{t=p+1}^n\|(\langle Y_t, \hat{v}_{j,Y}\rangle-\langle X_t, v_j\rangle)_{j=1,..,m}\|^2\\
\leq &\frac{2}{n-p}\sum_{t=p+1}^n\|(\langle Y_t, \hat{v}_{j,Y}\rangle-\langle Y_t, v_j\rangle)_{j=1,..,m}\|^2\\
&\quad+\frac{2}{n-p}\sum_{t=p+1}^n\|(\langle Y_t, v_j\rangle-\langle X_t, v_j\rangle)_{j=1,..,m}\|^2\displaybreak[0]\\
=&\frac{2}{n-p}\sum_{t=p+1}^n\|(\langle Y_t, \hat{v}_{j,Y}- v_j\rangle)_{j=1,..,m}\|^2+\frac{2}{n-p}\sum_{t=p+1}^n\|(\langle Y_t- X_t, v_j\rangle)_{j=1,..,m}\|^2.
\end{align*}
For the first summand, we use the Cauchy-Schwarz inequality and obtain
\begin{multline*}
\frac{2}{n-p}\sum_{t=p+1}^n\|(\langle Y_t, \hat{v}_{j,Y}- v_j\rangle)_{j=1,..,m}\|^2\leq \frac{2}{n-p}\sum_{t=p-1}\|Y_t\|\sum_{j=1}^m\|\hat{v}_{j,Y}-v_j\|^2.
\end{multline*}
As in the proof of Lemma 6.3 of \cite{PAP}, we have $\sum_{j=1}^m\|\hat{v}_{j,Y}-v_j\|^2=\mathcal{O}_P(n^{-1}\sum_{j=1}^m \alpha_j^{-2})$. For the second summand, we use that $Y_i-X_i=n^{-r}\mu$ for $i>k^\ast$ and $Y_i-X_i=0$ otherwise, so
\begin{multline*}
\frac{2}{n-p}\sum_{t=p+1}^n\|(\langle Y_t- X_t, v_j\rangle)_{j=1,..,m}\|^2\\
=\frac{2}{n^{2r}(n-p)}\sum_{t=k^\ast+1}^n\|(\langle \mu, v_j\rangle)_{j=1,..,m}\|^2=O(n^{-2r}).
\end{multline*}
Thus $\frac{1}{n-p}\sum_{t=p+1}^n\| \hat{\xi}_{t-p,Y}-\xi_{t-p}\|^2=\mathcal{O}_P(\max\{n^{-2r},n^{-1}\sum_{j=1}^m \alpha_j^{-2})\}$. By  Assumption \ref{ass2} it holds $\frac{p}{n}\sum_{j=1}^m \alpha_j^{-2}\rightarrow 0$ and $\frac{p}{n^{2r}}\rightarrow 0$ and the rest of the proof of statement 4 of Lemma 6.5  (\cite{PAP}) works in exactly the same way as before.

Lemma 6.6 of \cite{PAP} also holds, one has to use Lemma \ref{lem:covalt} in the proof to bound $\|\hat{C}_{0,y}-C_0\|$. Lemma 6.7  of \cite{PAP} is still true for $D^\ast_{n,m}$ based on $Y_1,...,Y_n$, because we still can use Lemma 6.5. Lemma 6.8 of \cite{PAP} is also valid, because $\sum_{j=1}^m\|\hat{v}_{j,Y}-v_j\|^2=\mathcal{O}_P(n^{-1}\sum_{j=1}^m \alpha_j^{-2})$. This completes the proof.


\section{Further Simulation Results}

 \begin{figure}
 \caption{Size-corrected empirical power for under different models with a jump of $\mu$ after $100$ of $n=200$ observations  (FSB = functional sieve bootstrap, NBB = non-overlapping block bootstrap, Asymptotic = method by \cite{AUE}).}\label{fig1b}
\includegraphics[width=\textwidth]{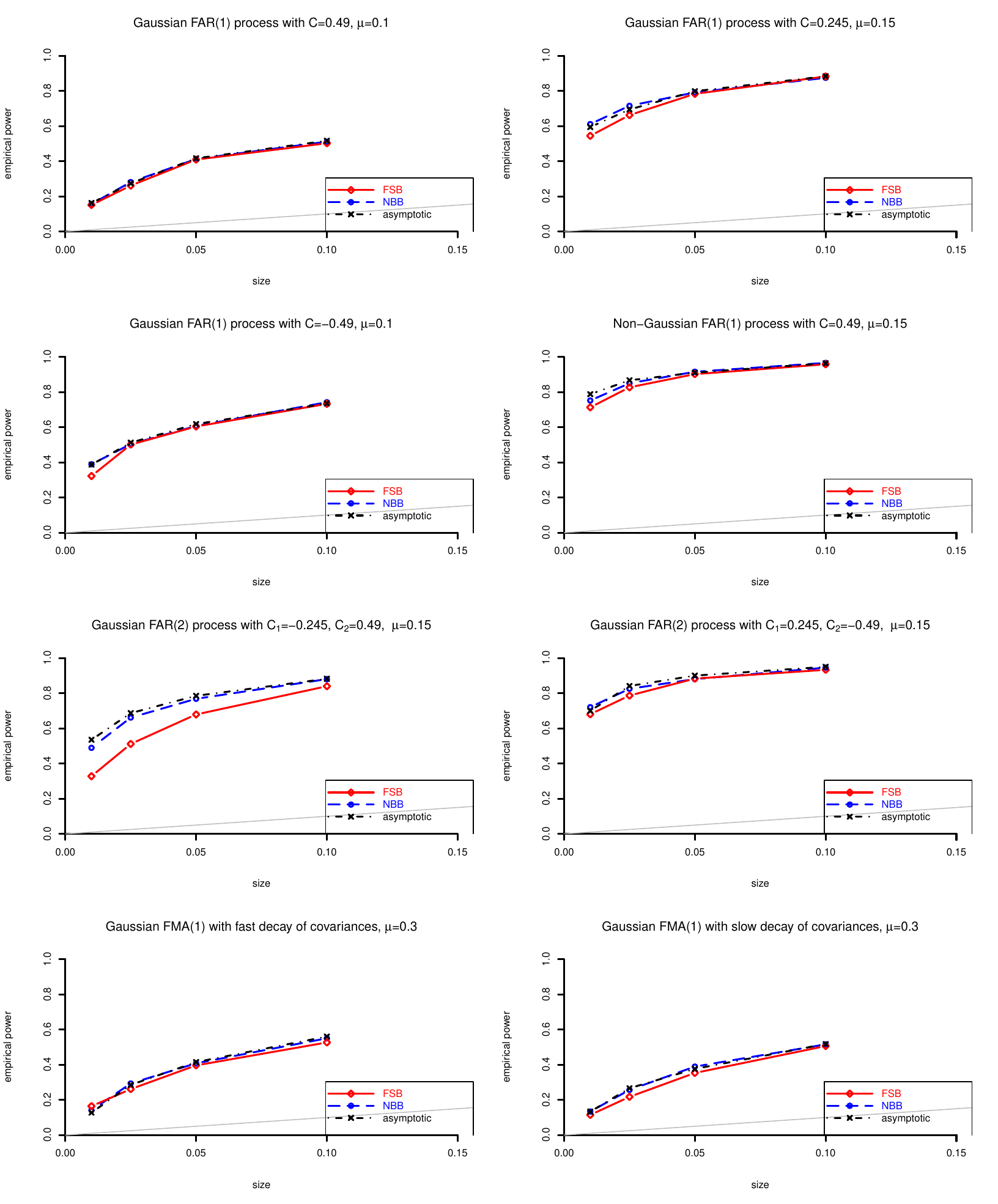} 
 \end{figure} 
 
To compare the power of the three methods (functional sieve bootstrap, non-overlapping block bootstrap, and estimation of the parameters of the limit distribution), we give addtional simulation results for a sample size of $n=200$. The observations are given by
\begin{equation*}
Y_n(t)=\begin{cases}X_n(t) \ \  &\text{for }n\leq 100\\ X_n(t)+\mu \ \ &\text{for }n\geq 101\end{cases},
\end{equation*}
 where $\mu$ is chosen to be constant (not dependent on $t$)  with values $0.1$, $0.15$ or $0.3$ depending on the dependence structure. As a stationary process $X_1,...,X_{200}$, we use FAR(1) and FMA(1) processes as described in Section \ref{sec:num}. Addtional, we simulate functional autoregressive processes of order 2 (FAR(2)) with
 \begin{equation*}
X_{n+1}(t)=C_1\int_{0}^{1}stX_n(s)ds+C_2\int_{0}^{1}stX_n(s)ds+\epsilon_{n+1}(t),
\end{equation*}
where $(\epsilon_n)_{n\in \mathbb{N}}$ are i.i.d  Brownian bridges. As can be seen in Figure \ref{fig1b}, the difference between the three methods are not very pronounced, although the over two methods have slightly higher size-correced power in most scenarios.

Addtionally, we have conducted simulations with fixed autoregressive order $p\in\{1,2,3\}$ for generating the bootstrap process. The sample size for these simulations is $n=100$ and they are based on $1000$ simulation runs. The results under null-hypothesis can be found in Table \ref{tabSize_porder}. For FAR(1)-processes, the choice $p=1$ leads to the most accurate size, while for the FMA(1)-processes, $p=3$ or $p=5$ improve the size. Under the alternative however, choosing $p=5$ reduces the power, see Table \ref{tabPower_porder}. So our recomendation is to use a low autoregressive order ($p\leq 3$) for generating the bootstrap time series. 

\begin{table}
\caption{Empirical rejection frequencies of the functional sieve bootstrap under $H_0$ for theoretical sizes $\alpha$, sample size $n=100$ and different autoregressive orders $p$ of the sieve bootstrap.}\label{tabSize_porder}
\vspace*{0.2cm}
\begin{tabular}{|l|l|ccc|}
\hline 
\rule[-1ex]{0pt}{2.5ex} model & method  & $\alpha=10\%$ & $\alpha=5\%$ & $\alpha=1\%$ \\ 
\hline 
FAR(1)  &  $p=1$ & 0.084 & 0.039 & 0.004\\ 
Gaussian  & $p=3$ & 0.078 & 0.031 & 0.001 \\ 
$C=0.49$ & $p=5$ &0.078 & 0.024 &  0.001\\ 
\hline
FAR(1)  & $p=1$ & 0.110 & 0.044 &0.008 \\ 
Gaussian  &  $p=3$ & 0.083 & 0.022 & 0.000\\ 
$C=-0.49$  & $p=5$ & 0.083& 0.028 & 0.000 \\ 
\hline
FMA(1) & $p=1$ & 0.075 & 0.026 & 0.005\\ 
fast decay &  $p=3$ & 0.086 & 0.027 & 0.000 \\ 
of autocov. & $p=5$ & 0.089 &  0.033 & 0.006 \\ 
\hline
FMA(1)&  $p=1$ &  0.077 & 0.030 & 0.003 \\ 
slow decay &  $p=3$ & 0.087 & 0.032 & 0.001 \\ 
of autocov. & $p=5$ & 0.091 & 0.032 & 0.004\\ 
\hline 
\end{tabular} 
\end{table}

\begin{table}
\caption{Empirical rejection frequencies of the functional sieve bootstrap under $H_1$ for theoretical sizes $\alpha$, sample size $n=100$ with jump size $\mu=0.15$ (FAR(1) models) or $\mu=0.3$ (FMA(1) models) after 50 observations and different autoregressive orders $p$ of the sieve bootstrap.}\label{tabPower_porder}
\vspace*{0.2cm}
\begin{tabular}{|l|l|ccc|}
\hline 
\rule[-1ex]{0pt}{2.5ex} model & method  & $\alpha=10\%$ & $\alpha=5\%$ & $\alpha=1\%$ \\ 
\hline 
FAR(1)  &  $p=1$ & 0.498 & 0.348 & 0.090\\ 
Gaussian  & $p=3$ & 0.457 & 0.228 & 0.025 \\ 
$C=0.49$ & $p=5$ & 0.384& 0.187 &  0.010\\ 
\hline
FAR(1)  & $p=1$ &  0.786 &  0.662 & 0.352\\ 
Gaussian  &  $p=3$ & 0.712 &  0.504& 0.136\\
$C=-0.49$  & $p=5$ & 0.658 & 0.376 & 0.040 \\ 
\hline
FMA(1) & $p=1$ & 0.313 & 0.184 & 0.044 \\ 
fast decay &  $p=3$ & 0.317 & 0.192 & 0.046\\ 
of autocov. & $p=5$ & 0.283 & 0.131 & 0.015 \\ 
\hline
FMA(1)&  $p=1$ & 0.327 & 0.194 & 0.044 \\ 
slow decay &  $p=3$ & 0.299 & 0.171 & 0.026 \\ 
of autocov. & $p=5$ & 0.235 & 0.093 & 0.007\\ 
\hline 
\end{tabular} 
\end{table}

\end{document}